\theoremstyle{plain}
\newtheorem{thm}{Theorem}
\newtheorem{ithm}{Theorem}
\newtheorem{lem}{Lemma}[subsection]
\newtheorem{prop}[lem]{Proposition}
\newtheorem{cor}[lem]{Corollary}
\newtheorem{pb}{Problem}
\theoremstyle{definition}
\newtheorem{defn}[lem]{Definition}
\theoremstyle{remark}
\newtheorem{rem}[lem]{Remark}
\newtheorem{exple}[lem]{Example}
\renewcommand{\k}{\mathbbm k}
\begin{document}

\dedicatory{Dedicated to Claude Cibils for his 60th birthday}

\title[Partial Relation Extensions]{Representation Theory of Partial
  Relation Extensions}

\author[I. Assem]{Ibrahim Assem}
\author[J. C. Bustamante]{Juan Carlos Bustamante}

\address[Ibrahim Assem, Juan Carlos Bustamante]{Département de
  Mathématiques, Université de Sherbrooke, Sherbrooke, Québec, Canada
  J1K2R1}
\email{\begin{tabular}[t]{l}
         ibrahim.assem@usherbrooke.ca\\
         juan.carlos.bustamante@usherbrooke.ca
         \end{tabular}}

\curraddr[Juan Carlos Bustamante]{Mathematics Department, Champlain
  College, Lennoxville, 2580 Rue College, Sherbrooke, Québec, Canada
  J1M 2K3}

\author[J. Dionne]{Julie Dionne}

\address[Julie Dionne]{}
\email{dionnej@gmail.com}

\author[P. Le Meur]{Patrick Le Meur}

\address[Patrick Le Meur]{ Laboratoire de Math\'ematiques,
  Universit\'e Blaise Pascal \& CNRS, Complexe Scientifique Les
  Cézeaux, BP 80026, 63171 Aubi\`ere cedex, France}

\curraddr[Patrick Le Meur]{Univ Paris Diderot, Sorbonne Paris Cit\'e, Institut de
  Math\'ematiques de Jussieu-Paris Rive Gauche, UMR 7586, CNRS, Sorbonne
  Universit\'es, UPMC Univ Paris 06, F-75013, Paris, France}

\email{patrick.le-meur@imj-prg.fr}

\author[D. Smith]{David Smith}

\address[David Smith]{Department of Mathematics, Bishop's University. Sherbrooke,
  Québec, Canada, J1M1Z7}

\email{dsmith@ubishops.ca}

\date{}

\begin{abstract}
  Let $C$ be  a finite dimensional algebra of global dimension at most two.
A partial relation extension is any trivial extension of $C$ by
a direct summand of its relation $C-C$-bimodule. When $C$ is a tilted algebra,
this construction provides an intermediate class of algebras between
tilted and cluster tilted algebras. The text investigates the
representation theory of partial relation extensions. When $C$ is tilted, any
complete slice in the Auslander-Reiten quiver of $C$ embeds as a local
slice in the Auslander-Reiten quiver of the partial relation
extension. Moreover, a systematic way 
of producing partial relation extensions is introduced by considering
direct sum decompositions of the potential arising from a minimal
system of relations of $C$. 
\end{abstract}

\subjclass[2010]{Primary 16G10; Secondary 16G70,16E30}

\keywords{ representation theory, finite dimensional algebra, partial
  relation extension, tilted algebra, cluster tilted algebra, cluster
  category, Auslander-Reiten quiver, local slice, quiver with
  potential}

\maketitle

\section*{Introduction}
\label{sec:introduction}

Cluster tilted algebras were introduced in \cite{MR2247893} and
independently in \cite{CCS06a} for the $\mathbb A$ case, as a
by-product of the now extensive theory of cluster algebras of Fomin and
Zelevinsky. They have been the subject of many investigations. In
particular, it was proved in \cite{MR2409188} that a cluster tilted
algebra can always be written as the relation extension of a tilted
algebra $C$, that is, the trivial extension of $C$ by the so-called
relation bimodule $E={\rm Ext}^2_C(DC,C)$. Tilted algebras have
been characterised by the existence of complete slices in their module
categories, see, for instance, \cite{MR2197389}. It was proven in
\cite{ABS08} that any complete slice in the module category of a
tilted algebra $C$ embeds in the module category of its relation
extension $\widetilde C$ as what is called a local slice. However, as
seen in \cite{ABS08}, the existence of local slices does not
characterise cluster tilted 
algebras, and it was asked there which algebras are characterised by
the existence of local slices. Our objective in the present paper is
to exhibit another natural class of algebras admitting local slices.

Because cluster tilted algebras are Jacobian algebras of quivers with
potential, as shown in \cite{MR2823864}, we take this context as our
starting point. We define the 
notion of direct sum decomposition of the Keller 
potential of the relation extension of a triangular algebra $C$ with
global dimension at most 
two. In this case, a direct sum decomposition of the potential
associated with the relation extension of $C$ induces
a direct sum decomposition of the
relation bimodule. It is reasonable to expect that the converse
statement also holds true. We can prove this converse in two cases
where a minimal system of minimal relations is known, namely the
cluster tilted algebras with a cyclically oriented quiver of
\cite{MR3169475}, which include all the representation-finite cluster
tilted algebras, see \cite{MR2271343}, and the cluster tilted algebras
of type $\widetilde{\mathbb A}$ of \cite{MR2592019}. Referring to
section~\ref{sec:decomp-potent-relat} for the definitions, our first
theorem reads as follows.

\begin{ithm}[Propositions~\ref{sec:from-direct-decomp-2},
  \ref{sec:from-direct-decomp-6} and \ref{sec:from-direct-decomp-7}]
  \label{sec:introduction-1}
  Let $C=\k Q/I$ be a triangular algebra of global dimension at most two, and
  $W$ be the Keller potential of its relation extension associated
  with a minimal system of relations in $I$. If
  $W=W'\oplus W''$ is a direct sum decomposition and $E',E''$ are the
  partial relation bimodules corresponding to $W',W''$ respectively,
  then
  \[
  E=E'\oplus E''
  \]
  as $C-C$-bimodules.

  Conversely, if $\widetilde C=C\ltimes {\rm Ext}^2_C(DC,C)$ is a cluster
  tilted algebra with a cyclically oriented quiver or a cluster tilted
  algebra of type $\widetilde{\mathbb A}$ and $E=E'\oplus E''$ is a
  direct sum decomposition of $E$ as $C-C$-bimodules, then there
  exists a direct sum decomposition of the Keller potential
  \[
  W=W'\oplus W''
  \]
  such that $E',E''$ are the partial relation bimodules
  corresponding to $W',W''$ respectively.  
\end{ithm}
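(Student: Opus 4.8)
The plan is to work throughout with the explicit model of the Keller potential recalled in Section~\ref{sec:decomp-potent-relat}: a minimal system of relations $R=\{\rho_1,\dots,\rho_n\}$ of $I$ produces exactly one new arrow $\alpha_i$ in the quiver of $\widetilde C$ for each $\rho_i$, the potential is the sum of the resulting cycles $W=\sum_i \alpha_i\rho_i$, and the classes $\overline{\rho_1},\dots,\overline{\rho_n}$ form a basis of $E=\mathrm{Ext}^2_C(DC,C)$ on which $C$ acts on both sides. The three propositions are then two implications describing how a splitting of the index set $\{1,\dots,n\}$ interacts with the $C$-$C$-bimodule structure of $E$.

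For the forward implication (Proposition~\ref{sec:from-direct-decomp-2}) I would argue almost formally. A direct sum decomposition $W=W'\oplus W''$ is, by the definition of Section~\ref{sec:decomp-potent-relat}, a partition $R=R'\sqcup R''$ for which no cyclic derivative mixes the two families of new arrows; setting $E'$ and $E''$ to be the spans of the classes of the relations in $R'$ and $R''$ respectively yields $E=E'\oplus E''$ as vector spaces by construction. The only point to check is that $E'$ and $E''$ are sub-bimodules, and this is exactly what the compatibility built into the definition of a potential decomposition provides: the left and right actions of the arrows of $Q$ on a class $\overline{\rho_i}$ are read off from the cyclic derivatives of $W$, so the non-mixing condition forces each action to preserve $R'$ and $R''$. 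This identifies $E',E''$ with the partial relation bimodules attached to $W',W''$.

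The substantial work, and the main obstacle, is the converse for the two families in Propositions~\ref{sec:from-direct-decomp-6} and~\ref{sec:from-direct-decomp-7}. Here one starts from an abstract bimodule decomposition $E=E'\oplus E''$, equivalently from an idempotent $e$ of the endomorphism algebra $\mathrm{End}_{C\text{-}C}(E)$, and must produce a compatible splitting of the potential. The difficulty is that a minimal system of relations is not canonical, so a priori the summands $E',E''$ need not be spanned by sub-families of the chosen classes $\overline{\rho_i}$; one must show that, in these explicit cases, the decomposition can always be aligned with a choice of minimal relations. The strategy is to use the explicit descriptions of the quiver, of the minimal relations, and hence of the bimodule $E$ that are available for cyclically oriented cluster tilted algebras (from~\cite{MR3169475}) and for those of type $\widetilde{\mathbb A}$ (from~\cite{MR2592019}). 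For each family I would compute $\mathrm{End}_{C\text{-}C}(E)$, or at least classify its idempotents, and verify by the combinatorics of these quivers that every idempotent is a ``diagonal'' one, that is, splits the basis of relations into two blocks $R'\sqcup R''$; the non-mixing condition needed to call $W'=\sum_{i\in R'}\alpha_i\rho_i$ and $W''=\sum_{i\in R''}\alpha_i\rho_i$ a decomposition of $W$ then follows from the same combinatorial analysis. I expect the type $\widetilde{\mathbb A}$ case to be the most delicate, since the relations there intertwine the arrows of the two oriented cycles bounding the quiver, so that $E$ may carry nontrivial ``off-diagonal'' endomorphisms that must either be ruled out or shown to respect a suitable partition.
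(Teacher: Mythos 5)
Your proposal rests on a false structural premise: the classes $\overline{\rho_i}$ (equivalently, of the new arrows $\alpha_i$) do \emph{not} form a basis of $E={\rm Ext}^2_C(DC,C)$; they only generate the \emph{top} of $E$ as a $C$-$C$-bimodule (see \cite[(2.4)]{MR2409188}, as recalled in Section~\ref{sec:from-direct-decomp}). In general $E$ has a nonzero radical: for instance, in the type $\widetilde{\mathbb A}$ example of Section~\ref{sec:from-direct-decomp-4}, the summand $E'={\rm Span}(\gamma,\gamma\lambda,\mu\gamma,\mu\gamma\lambda)$ is four-dimensional but generated by a single arrow class. Consequently your treatment of the forward implication gets the logic backwards. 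If $E'$, $E''$ are defined as the sub-bimodules generated by the two families of arrow classes (which is the paper's definition of partial relation bimodule), then being sub-bimodules is automatic and $E=E'+E''$ is easy; the genuinely nontrivial point, which your proof never addresses, is that $E'\cap E''=0$. This is not ``by construction'': one must rule out that a relation in $\widetilde I$, i.e.\ a combination $\sum_\ell t_\ell\, a_\ell\cdot\partial_{\beta_\ell}W\cdot b_\ell$ of cyclic derivatives, identifies a nonzero element of $E'$ with one of $E''$. That is exactly the content of the technical lemma preceding Proposition~\ref{sec:from-direct-decomp-2}: a nonzero element of $E'\cap E''$ forces the existence of cycles of $W'$ and $W''$ sharing a common arrow, contradicting directness of $W=W'\oplus W''$. (If instead you literally mean $E'$ to be the linear span of the relation classes, then it is not a sub-bimodule at all, since the left and right $C$-actions push top elements into the radical, and $E'+E''$ is a proper subspace of $E$ whenever ${\rm rad}\,E\neq 0$.)

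For the converse you correctly identify the key difficulty (an abstract bimodule decomposition need not a priori align with the arrow classes), but what follows is a plan, not a proof: ``I would compute ${\rm End}_{C\text{-}C}(E)$ \dots\ and verify by the combinatorics \dots'' is precisely the part that constitutes Propositions~\ref{sec:from-direct-decomp-3} and~\ref{sec:from-direct-decomp-7}, and none of it is executed. In the cyclically oriented case the paper uses that $\dim_\k{\rm Ext}^2_C(S_x,S_y)\leqslant 1$, that the new arrow $y\to x$ is the unique path from $x$ to $y$ (so $e_y\,{\rm rad}(E)\,e_x=0$ there), and property (c) of cyclically oriented quivers (two paths antiparallel to an arrow meet only at their endpoints) to derive a contradiction from an arrow shared by cycles of $W'$ and $W''$. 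In type $\widetilde{\mathbb A}$ the argument stratifies the pairs $(x,y)$ with $e_x({\rm top}\,E)e_y\neq 0$ into three classes $\Sigma_1,\Sigma_2,\Sigma_3$ via the gentle structure and Lemma~\ref{sec:from-direct-decomp-5}, and rules out ``mixed'' summands by multiplying candidate elements by suitable arrow classes (e.g.\ showing that $(\overline{\alpha}-\lambda\,\overline{u\beta v})\cdot\overline{a}$ would be a nonzero element of $E'$ lying in $E''$). Your expectation that every idempotent of ${\rm End}_{C\text{-}C}(E)$ is diagonal is indeed the right conclusion, but asserting it is where the work begins, not where it ends; as stated, the proposal leaves both halves of the theorem without a complete argument.
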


We then define the class of algebras we are interested in. Let $C$ be
a triangular algebra of global dimension at most two, and $E=E'\oplus
E''$ be a direct sum of $C-C$-bimodules, then the algebra $B=C\ltimes
E'$ is called a partial relation extension of $C$. Because it is
easily shown that $\widetilde C=B\ltimes E''$, partial relation
extensions can be thought of as an intermediate class of algebras
between tilted and cluster tilted algebras (or more generally, between
a triangular algebra of global dimension at most two, and its relation
extension). The bound quiver of a
partial relation extension is easily computed and we then proceed to
study its module category, obtaining the following theorem when the
original algebra $C$ is tilted.

\begin{ithm}
  \label{sec:introduction-2}
  Let $H$ be a hereditary algebra, $\mathcal C_H$ its cluster
  category, $T_H$ be a tilting $H$-module and
  $C={\rm End}_H(T)$. Then, there exists an ideal $\mathcal K$ in the
  cluster category such that the composition  $( -\underset{\widetilde
      C}{\otimes} B) \circ {\rm Hom}_{\mathcal C_H}(T,-)\colon
    \mathcal C_H\to {\rm mod}\,\widetilde C\to {\rm mod}\,B$
  induces an equivalence
  \[{\rm mod}\,B\simeq \mathcal C_H/\mathcal K\,.\]
\end{ithm}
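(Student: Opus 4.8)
The plan is to factor the composite $G=(-\otimes_{\widetilde C}B)\circ{\rm Hom}_{\mathcal C_H}(T,-)$ through two well-understood functors and to exhibit $\mathcal K$ as its kernel ideal. Write $F={\rm Hom}_{\mathcal C_H}(T,-)\colon\mathcal C_H\to{\rm mod}\,\widetilde C$. Since $C$ is tilted, the relation extension $\widetilde C$ is the cluster tilted algebra ${\rm End}_{\mathcal C_H}(T)$, so the theorem of Buan--Marsh--Reiten (and Keller--Reiten) applies: $F$ is full and dense, and it induces an equivalence $\mathcal C_H/{\rm add}(\tau T)\xrightarrow{\sim}{\rm mod}\,\widetilde C$, where $\tau T\cong T[1]$ and the kernel is exactly the ideal of morphisms factoring through ${\rm add}(\tau T)$. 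On the other side, because $\widetilde C=B\ltimes E''$, the bimodule $E''$ is a square-zero two-sided ideal of $\widetilde C$ with $\widetilde C/E''=B$; hence $-\otimes_{\widetilde C}B$ is the right-exact reduction functor $R\colon X\mapsto X/XE''$, which is left adjoint to the full embedding ${\rm mod}\,B\hookrightarrow{\rm mod}\,\widetilde C$ of the modules annihilated by $E''$.

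Next I would set $\mathcal K$ to be the ideal of $\mathcal C_H$ consisting of the morphisms sent to zero by $G=R\circ F$. Then $G$ factors as $\bar G\colon\mathcal C_H/\mathcal K\to{\rm mod}\,B$, which is faithful by construction, and $\mathcal K$ contains every morphism factoring through ${\rm add}(\tau T)$ since $F$ already annihilates these. Density of $\bar G$ is then immediate: given a $B$-module $Z$, regard it as a $\widetilde C$-module killed by $E''$; density of $F$ yields $M\in\mathcal C_H$ with $F(M)\cong Z$, and since $ZE''=0$ we get $G(M)=R(F(M))=Z/ZE''=Z$. Thus $\bar G$ is dense onto ${\rm mod}\,B$.

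The hard part is fullness of $\bar G$, and here the formal quotient argument is not enough. Using that $F$ is full and that ${\rm Hom}_B(X/XE'',Y/YE'')={\rm Hom}_{\widetilde C}(X/XE'',Y/YE'')$, fullness reduces to the surjectivity, for all $X=F(M)$ and $Y=F(N)$ in the image of $F$, of the natural map ${\rm Hom}_{\widetilde C}(X,Y)\to{\rm Hom}_{\widetilde C}(X/XE'',Y/YE'')$. Given $\bar f$ on the right, the obstruction to lifting $\bar f\circ\pi_X\colon X\to Y/YE''$ along the projection $\pi_Y\colon Y\to Y/YE''$ lies in ${\rm Ext}^1_{\widetilde C}(X,YE'')$, via the short exact sequence $0\to YE''\to Y\to Y/YE''\to 0$; any lift $f\colon X\to Y$ then satisfies $\pi_Y f=\bar f\pi_X$, so fullness follows once this obstruction vanishes on the image of $F$. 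The plan is to prove this vanishing by exploiting the square-zero decomposition $E=E'\oplus E''$ together with the hypotheses that $C$ is tilted of global dimension at most two: concretely, I would read off minimal projective presentations of $X$ over $\widetilde C$ from the bound quiver of $B$ and the bimodule description of $E''$, and use the embedding of a complete slice of $C$ as a local slice to control the relevant extension groups. This structural input, rather than a purely categorical quotient argument, is where the real work lies.

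Combining density and fullness with the built-in faithfulness, $\bar G\colon\mathcal C_H/\mathcal K\xrightarrow{\sim}{\rm mod}\,B$ is an equivalence, which is the assertion of the theorem.
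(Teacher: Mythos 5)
Your skeleton is in fact the paper's: you factor the composite through the Buan--Marsh--Reiten functor $F={\rm Hom}_{\mathcal C_H}(T,-)$ and the reduction functor $-\underset{\widetilde C}{\otimes}B$, you take $\mathcal K$ to be the kernel ideal of the composite (which is what Lemma~\ref{sec:module-categ-part-2} describes via minimal ${\rm add}(T)$-approximations), and your density and faithfulness arguments are those of Lemma~\ref{sec:module-categ-part-4} and Theorem~\ref{sec:module-categ-part-3}. The genuine gap is the step you yourself single out: fullness is never proved. You reduce it, correctly, to the surjectivity of ${\rm Hom}_{\widetilde C}(X,Y)\to{\rm Hom}_{\widetilde C}(X/XE'',Y/YE'')$ for all $X,Y$ in the image of $F$, and then you offer only a plan for showing that the obstruction in ${\rm Ext}^1_{\widetilde C}(X,YE'')$ vanishes. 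A plan is not a proof, and this particular plan cannot succeed, because the vanishing statement is false: since $F$ is dense, its image is all of ${\rm mod}\,\widetilde C$ up to isomorphism, so what you need is precisely that $-\underset{\widetilde C}{\otimes}B$ be a full functor on ${\rm mod}\,\widetilde C$, and this fails already in the paper's own running example.

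Concretely, take example (a) at the end of subsection~\ref{sec:module-categ-part}, where $E''$ is one-dimensional, spanned by the class of the new arrow $\mu\colon 2\to 5$. Put $Y=e_2\widetilde C$, the indecomposable projective $\widetilde C$-module $\begin{smallmatrix}2\\5\end{smallmatrix}$, and $X=S_2$. Then $XE''=0$ and $YE''={\rm rad}\,Y\simeq S_5$, so $X/XE''\simeq Y/YE''\simeq S_2$ and ${\rm Hom}_B(S_2,S_2)=\k$, whereas ${\rm Hom}_{\widetilde C}(S_2,Y)=0$ because ${\rm soc}\,Y=S_5$. The obstruction attached to ${\rm id}_{S_2}$ is exactly the class of
\[
0\to S_5\to Y\to S_2\to 0
\]
in ${\rm Ext}^1_{\widetilde C}(S_2,S_5)$, which is nonzero since this sequence does not split; no structural input from slices or projective presentations can make it vanish. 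Accordingly, for any morphism $f\colon M\to N$ in $\mathcal C_H$ with $F(M)\simeq S_2$ and $F(N)\simeq Y$, the image $F(f)$ lies in ${\rm Hom}_{\widetilde C}(S_2,Y)=0$, hence the composite functor sends $f$ to $0$, while ${\rm Hom}_B$ between the images of $M$ and $N$ equals $\k$. Note that your suspicion that ``the formal quotient argument is not enough'' is well founded: the same pair $(S_2,Y)$ shows that $-\underset{\widetilde C}{\otimes}B$ is not full, which is precisely the one-line assertion on which the fullness claim of Lemma~\ref{sec:module-categ-part-4} rests. So you have correctly located the crux of the proof, but neither the paper's composition-of-full-functors argument nor your ${\rm Ext}^1$-vanishing plan closes it; this step requires genuine repair (for instance restricting which pairs of objects are compared, or reformulating the quotient), not merely more careful execution of your sketch.
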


The ideal $\mathcal K$ is characterised by approximations in the
cluster category. It is
important to observe that, in contrast to what happens for cluster
tilted algebras, factoring by $\mathcal K$ does not mean 
simply deleting finitely many objects of $\mathcal C_H$: we may have
$H$ representation-infinite and $B$ representation-finite. As an easy
consequence of our theorem~\ref{sec:introduction-2}, we obtain a full
and dense functor from the module category of the cluster repetitive
algebra of $C$ to ${\rm mod}\,B$. Returning to our original
motivation, we finally prove the following result.

\begin{ithm}
  \label{sec:introduction-3}
  Let $C$ be a tilted algebra and $A$ be an algebra such that there
  exist surjective algebra morphisms $\widetilde C\twoheadrightarrow
  A\twoheadrightarrow C$. Then any complete slice in $\Gamma({\rm
    mod}\,C)$ embeds as a local slice in $\Gamma({\rm mod}\,A)$. In
  particular, partial relation extensions admit local slices.
\end{ithm}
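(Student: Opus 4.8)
The plan is to factor the embedding of \cite{ABS08} and transport the local slice structure downward. Write $\Sigma$ for the given complete slice in $\Gamma(\mathrm{mod}\,C)$. The surjections $\widetilde C\twoheadrightarrow A\twoheadrightarrow C$ induce, by restriction of scalars, full, faithful and exact embeddings $\iota\colon\mathrm{mod}\,C\to\mathrm{mod}\,A$ and $\pi\colon\mathrm{mod}\,A\to\mathrm{mod}\,\widetilde C$, whose composite $\pi\iota$ is precisely the embedding considered in \cite{ABS08}, namely restriction along the canonical surjection $\widetilde C\twoheadrightarrow C$. By the theorem of \cite{ABS08}, $\pi\iota(\Sigma)$ is a local slice in $\Gamma(\mathrm{mod}\,\widetilde C)$. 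I would first observe that trivial extensions do not alter the set of vertices of the ordinary quiver, so $\mathrm{rk}\,K_0(C)=\mathrm{rk}\,K_0(A)=\mathrm{rk}\,K_0(\widetilde C)$; since $\iota$ is full, faithful and exact it preserves indecomposability and is injective on isomorphism classes, whence $|\iota(\Sigma)|=|\Sigma|=\mathrm{rk}\,K_0(A)$ and the cardinality axiom for a local slice holds for $\iota(\Sigma)$ in $\Gamma(\mathrm{mod}\,A)$.

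It then remains to verify that $\iota(\Sigma)$ is a presection and is sectionally convex in $\Gamma(\mathrm{mod}\,A)$. The key technical device will be a local comparison of Auslander--Reiten theory for the three algebras along the slice. Concretely, I would prove that in a neighbourhood of $\Sigma$---that is, for the indecomposables lying on $\Sigma$ together with the sources and targets of the arrows of $\Gamma(\mathrm{mod}\,A)$ incident to them---the almost split sequences and the irreducible morphisms of $\mathrm{mod}\,A$ coincide with those of $\mathrm{mod}\,\widetilde C$, and in particular $\tau_A$ agrees with $\tau_{\widetilde C}$ on the relevant modules. This rests on the identification of $\mathrm{mod}\,A$, through $\pi$, with the full subcategory of $\widetilde C$-modules annihilated by the ideal corresponding to $E''$: since the modules of $\Sigma$ are $C$-modules, they are annihilated by all of $E=E'\oplus E''$, so the arrows of $\Gamma(\mathrm{mod}\,\widetilde C)$ created by the summand $E''$ do not meet the slice in a way that alters its local shape.

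Granting this comparison, the presection and convexity axioms descend from $\widetilde C$ to $A$. Given an arrow $X\to Y$ in $\Gamma(\mathrm{mod}\,A)$ with $X\in\iota(\Sigma)$, the comparison identifies it with an arrow $X\to Y$ in $\Gamma(\mathrm{mod}\,\widetilde C)$; the presection property of the local slice $\pi\iota(\Sigma)$ then forces $Y$ or $\tau_{\widetilde C}Y$ into the slice, and the agreement $\tau_A=\tau_{\widetilde C}$ near $\Sigma$ yields the corresponding statement in $\mathrm{mod}\,A$; the dual argument handles arrows ending in $\iota(\Sigma)$. Likewise, a sectional path in $\Gamma(\mathrm{mod}\,A)$ joining two modules of $\iota(\Sigma)$ maps to a sectional path in $\Gamma(\mathrm{mod}\,\widetilde C)$ between modules of $\pi\iota(\Sigma)$, whose intermediate terms lie in the slice by sectional convexity there and, being thus $C$-modules, lie in $\iota(\Sigma)$. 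This establishes that $\iota(\Sigma)$ is a local slice in $\Gamma(\mathrm{mod}\,A)$. Finally, for a partial relation extension $B=C\ltimes E'$ one has $\widetilde C=B\ltimes E''$, so the canonical projections give surjections $\widetilde C\twoheadrightarrow B\twoheadrightarrow C$; applying the above with $A=B$ shows that $B$ admits a local slice.

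I expect the main obstacle to be exactly the local Auslander--Reiten comparison of the middle paragraph: the embedding $\pi$ is not an equivalence and in general neither preserves irreducible morphisms nor commutes with the Auslander--Reiten translate, so one must show carefully that the extra arrows and the modified translates introduced by $E''$ stay away from the slice. This is the analogue, for the pair $(A,\widetilde C)$, of the local analysis carried out for $(C,\widetilde C)$ in \cite{ABS08}, and adapting that analysis to the intermediate algebra $A$ is the crux of the proof.
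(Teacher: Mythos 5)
There is a genuine gap, and you have in fact located it yourself: the entire proof hinges on the ``local Auslander--Reiten comparison'' of your middle paragraph, which you do not prove but only defer (``I would prove\dots'', ``Granting this comparison\dots''). The heuristic you offer for it is not sound as stated. Restriction of scalars along $\widetilde C\twoheadrightarrow A$ identifies ${\rm mod}\,A$ with a full exact subcategory of ${\rm mod}\,\widetilde C$, but full exact embeddings do not preserve irreducible morphisms or almost split sequences in the upward direction: a factorization in ${\rm mod}\,\widetilde C$ may pass through modules not annihilated by $\ker(\widetilde C\to A)$, so an arrow $X\to Y$ of $\Gamma({\rm mod}\,A)$ with $X\in\Sigma$ need not correspond to any arrow of $\Gamma({\rm mod}\,\widetilde C)$. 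Consequently the presection property of the local slice in $\Gamma({\rm mod}\,\widetilde C)$ from \cite{ABS08} cannot be invoked on it, and your downward transport (both for the presection axiom and for sectional convexity, where you map sectional paths of $\Gamma({\rm mod}\,A)$ to sectional paths of $\Gamma({\rm mod}\,\widetilde C)$) does not get started. A secondary issue: you identify $\ker(\widetilde C\to A)$ with ``the ideal corresponding to $E''$'', which silently assumes $A$ is a partial relation extension; the theorem allows an arbitrary $A$ sandwiched between $\widetilde C$ and $C$.

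The paper's proof circumvents exactly the comparison you could not supply: it never compares ${\rm mod}\,A$ with ${\rm mod}\,\widetilde C$ directly, but compares each with ${\rm mod}\,C$, where the slice lives. First, for $M$ on the slice it proves $\tau_CM\simeq\tau_{\widetilde C}M$ (and the analogous projectivity/injectivity statements) using the criteria of \cite{AZ99} for trivial extensions: one must check $M\underset{C}{\otimes}E=0$ and ${\rm Hom}_C(E,\tau_CM)=0$, which is genuine tilting-theoretic work resting on the identification $E\simeq\tau_C^{-1}\Omega^{-1}C$ and on the fact that every indecomposable summand of $E$ is a proper successor of $\Sigma$ while $\tau_CM$ is a proper predecessor. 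Second, the Auslander--Reiten result on surjective algebra morphisms gives, for the two surjections $\widetilde C\twoheadrightarrow A\twoheadrightarrow C$, inclusions $\tau_CM\subseteq\tau_AM\subseteq\tau_{\widetilde C}M$, and the isomorphism $\tau_CM\simeq\tau_{\widetilde C}M$ squeezes these into equalities, yielding $\tau_AM\simeq\tau_CM$ with no direct analysis of ${\rm mod}\,A$ versus ${\rm mod}\,\widetilde C$ at all. From this one deduces that all irreducible neighbours in ${\rm mod}\,A$ of slice modules are $C$-modules; the presection axiom then follows because irreducibility \emph{descends} to the full subcategory ${\rm mod}\,C$ (the safe direction), and sectional convexity is proved not by transporting paths upward but by composing the sectional path, whose composite is nonzero, and invoking convexity of the complete slice inside ${\rm mod}\,C$. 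If you want to complete your argument, this sandwich is the missing ingredient; as written, your proposal assumes the conclusion's hardest part.
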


Notice however that H. Treffinger \cite{treffinger} has obtained a
very large class of algebras having local slices, comprising partial
relation extensions.

We devote a section of the paper to the proof of each of the stated theorems.


\section{Decomposition of the potential and the relation bimodule}
\label{sec:decomp-potent-relat}

\subsection{Decompositions of a potential}
\label{sec:decomp-potent-exampl}

Let $(Q,W)$ be a pair consisting of a finite quiver $Q$ and a
potential $W$, that is, a linear combination of oriented cycles of $Q$. Define a relation 
between the (oriented) cycles which appear as summands of $W$ as follows:
$\gamma\sim\gamma'$ 
whenever there exists an arrow $\alpha\in Q_1$ which is common to both
 $\gamma$ and $\gamma'$. This relation is reflexive and
symmetric, let $\approx$ be its transitive closure (that is, the
smallest equivalence relation containing it).

Two cycles $\gamma$ and $\gamma'$ are called \emph{independent} if
$\gamma\not\approx \gamma'$, and \emph{dependent} if $\gamma\approx
\gamma'$.

A sum decomposition of the potential
\[
W=W'+W''
\]
is said to be \emph{direct} if, whenever $\gamma'$ is any cycle in $W'$ and
$\gamma''$ is any cycle in $W''$, we have $\gamma'\not\approx
\gamma''$. We denote a direct sum decomposition of the potential as
$W=W'\oplus W''$.

\begin{exple}\ 
  \begin{enumerate}[(a)]
  \item Let $(Q,W)$ be the quiver
    \[
    \xymatrix{
      \bullet \ar[rr]^{\epsilon} \ar@/_15ex/_{\eta}[rrdd]& & \bullet \ar[ld]_{\beta} \\
      & \bullet \ar[lu]^{\gamma}  \ar[ld]_{\delta} \\
      \bullet \ar[rr]_{\sigma} \ar@/_4ex/[rruu]_(.7){\tau}& & \bullet
      \ar[lu]_(.3){\alpha}  
    }
    \]
    with $W=\beta\gamma \epsilon +\beta\delta\tau + \alpha\gamma \eta
    + \alpha \delta \sigma$. Here, the four summands of the potential
    are pairwise dependent.
  \item Let $(Q,W)$ be the quiver
    \[
    \xymatrix{
      1 \ar[rr]^\lambda & & 4 \ar[ld]^\alpha \\
      & 3 \ar[lu]^\beta \ar[ld]_\delta \\
      2 \ar[rr]_\mu & & 5 \ar[lu]_\gamma
    }
    \]
    with $W=\alpha\beta\lambda +\gamma\delta\mu$. Here the two
    cycles $\alpha\beta\lambda$ and $\gamma\delta\mu$ are
    independent so the decomposition $W=W_1+W_2$ with
    $W_1=\alpha\beta\lambda$ and $W_2= 
    \gamma\delta\mu$ is direct, and $W=W_1\oplus W_2$.
  \end{enumerate}
\end{exple}

\subsection{Induced decompositions of the relation bimodule}
\label{sec:from-direct-decomp}

Our objective is to apply the notion of direct sum decompositions of the
potential to the study of
cluster tilted algebras.  We refer the reader to \cite{MR2247893} and
to \cite{MR2409188} for general background on cluster tilted
algebras. In particular let $C$ be a triangular algebra of global 
dimension at most two and consider the $C-C$-bimodule 
$E={\rm Ext}^2_{C}(DC,C)$ equipped with the natural left and right
actions of $C$. This bimodule $E$ is called the \emph{relation
  bimodule} and the trivial extension algebra $\widetilde C=C\ltimes
E$ is called the \emph{relation extension} of $C$.
The best known
class of relation extensions is provided by the cluster tilted
algebras: it is shown in \cite[(3.4)]{MR2409188} that, if $C$ is a
tilted algebra,  then $\widetilde C$ is cluster tilted, and every
cluster tilted algebra arises in this way.

The bound quiver of a relation extension is constructed as
follows. Let $C=\k Q/I$ be an admissible presentation of $C$.
A subset
$R=\{\rho_1,\ldots, \rho_t\}$ of $\bigcup_{x,y\in 
  Q_0}e_xIe_y$ is called a  \emph{system of
  relations} for $C$ if  $R$, but no proper subset of $R$, generates $I$ as a two-sided
ideal, see \cite[(1.2)]{MR724715}. The ordinary quiver $\widetilde Q$ of
$\widetilde C$ has the same vertices as those of $Q$, while the set of
arrows in $\widetilde Q$ from $x$ to $y$, say, equals the set of
arrows in $Q$ from $x$ to $y$, plus, for each relation $\rho\in R\cap
e_yIe_x$, a so-called \emph{new} arrow $\alpha_\rho\colon x\to y$, see
\cite[(2.6)]{MR2409188}. Thus $\widetilde C$ is not triangular unless
$C$ is hereditary and, if $R=\{\rho_1,\ldots,\rho_r\}$ is as above,
and the new arrow $\alpha_i$ corresponds to $\rho_i$, then
$\alpha_i\rho_i$ is an oriented cycle in $\widetilde Q$. We define the
\emph{Keller potential} (associated with $R$) by setting
\[
W=\sum_{i=1}^t\alpha_i\rho_i\,.
\]
Oriented cycles in potentials are, as usual, considered up to cyclic
permutations: two potentials are called \emph{cyclically equivalent}
if their difference lies in the linear span of all elements of the
form
$\gamma_1\gamma_2\cdots\gamma_m-\gamma_m\gamma_1\cdots\gamma_{m-1}$
where $\gamma_1\cdots\gamma_m$ is an oriented cycle. For a given arrow
$\beta$, the \emph{cyclic partial derivative} $\partial_\beta$ of
$W$ is defined on each cyclic summand $\gamma_1\cdots\gamma_m$ of $W$
by
\[
\partial_\beta(\gamma_1\cdots\gamma_m)=\sum_{\beta=\gamma_i}\gamma_{i+1}\cdots\gamma_m\gamma_1\cdots
\gamma_{i-1}\,.
\]
In particular, cyclic derivatives are invariant under cyclic
permutations. The \emph{Jacobian algebra} $J(\widetilde Q,W)$ is the one
given by the quiver $\widetilde Q$ bound by all partial cyclic
derivatives $\partial_\beta W$ of the Keller potential $W$ with respect
to each arrow $\beta\in \widetilde Q_1$. Then the relation extension
$\widetilde C$ is isomorphic to $J(\widetilde Q,W)/\mathcal J$ where $\mathcal J$
is the square of the ideal of $J(\widetilde Q,W)$ generated by the new arrows,
see \cite[Lemma 5.2]{AGST}. If, in particular, $C$ is tilted, so that
$\widetilde C$ is cluster tilted, then
$\widetilde C\simeq J(\widetilde Q,W)$, see for instance
\cite{MR2795754}.

Setting $\widetilde C=\k \widetilde Q/\widetilde I$, we recall from
\cite[(2.4)]{MR2409188} that the classes of arrows 
(modulo $\widetilde I$) which belong to $\widetilde Q_1\backslash Q_1$
are the generators of the $C-C$-bimodule $E$.

Before proving the main result of the subsection, we need a technical
lemma. We assume that $C$ is a triangular algebra of global dimension
at most two, and that $\widetilde C$ is its relation extension.
\begin{lem}
  With the above notation, consider a partition of the set of new
  arrows $\widetilde Q_1\backslash Q_1=F'\cup F''$. Let
  $E',E''$ be the subbimodules of $E$ generated by the classes of the
  arrows in $F'$ and $F''$, respectively. If
  $E'\cap E''\neq 0$ then there exist oriented cycles
  $\gamma',\gamma''$ in $W$ such that
  \begin{enumerate}
  \item $\gamma'$ has one or two arrows in $\widetilde Q_1\backslash
    Q_1$, and at least one of them lies in $F'$,
  \item $\gamma''$ has one or two arrows in $\widetilde Q_1\backslash
    Q_1$, and at least one of them lies in $F''$,
  \item $\gamma'$ and $\gamma''$ have a common arrow,
  \item $\gamma'$ has two arrows in $\widetilde Q_1\backslash Q_1$ if
    and only if so does $\gamma''$, in which case $\gamma'$ and
    $\gamma''$ have a common arrow in $\widetilde Q_1\backslash Q_1$.    
  \end{enumerate}
\end{lem}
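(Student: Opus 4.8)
The plan is to realise $E$ explicitly as a presented $C$-$C$-bimodule and then track, generator by generator, how an element of $E'\cap E''$ can be written in two ways. Using the Jacobian description $\widetilde C\simeq J(\widetilde Q,W)/\mathcal J$ recalled above, the degree-one part of $\widetilde C$ (with respect to the number of new arrows) is precisely $E$, and its generators are the classes $\bar\alpha_1,\dots,\bar\alpha_t$ of the new arrows. The only defining relations that are homogeneous of degree one in the new arrows are the cyclic derivatives $\partial_\beta W$ with $\beta\in Q_1$ an \emph{old} arrow; indeed $\partial_{\alpha_j}W=\rho_j$ is a relation of $C$ itself and the ideal $\mathcal J$ lives in degree at least two. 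Each $\partial_\beta W=\sum_i u_{i}\,\alpha_i\,v_{i}$ is a combination of paths through exactly one new arrow, a term through $\alpha_i$ occurring if and only if $\beta$ appears in the relation $\rho_i$. I would therefore present $E$ as $P/\mathcal R$, where $P=\bigoplus_{i}C\bar\alpha_iC$ is the free bimodule on the new arrows and $\mathcal R$ is the sub-bimodule generated by the elements $\partial_\beta W$, $\beta\in Q_1$.

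The heart of the argument is a separation argument in $P$. Fix $0\neq z\in E'\cap E''$ and lift it to two elements of $P$: one element $z'$ supported on the summands $C\bar\alpha_iC$ with $\alpha_i\in F'$, and one element $z''$ supported on those with $\alpha_i\in F''$, so that $z'-z''\in\mathcal R$. I claim that some generator $\partial_\beta W$ of $\mathcal R$ has a nonzero component on a summand indexed by an arrow of $F'$ and a nonzero component on a summand indexed by an arrow of $F''$. Suppose not; then every $\partial_\beta W$ is supported either entirely on $F'$-summands or entirely on $F''$-summands, so $\mathcal R=\mathcal R'\oplus\mathcal R''$ splits along the decomposition $P=\big(\bigoplus_{\alpha_i\in F'}C\bar\alpha_iC\big)\oplus\big(\bigoplus_{\alpha_j\in F''}C\bar\alpha_jC\big)$. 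Writing $z'-z''=r'+r''$ accordingly and comparing the two complementary summands forces $z'=r'\in\mathcal R'$, whence $z=0$ in $E$, a contradiction.

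With a mixing relation $\partial_\beta W$ in hand, I would read off the cycles. Because the component of $\partial_\beta W$ on $\bar\alpha_i$ is nonzero exactly when the old arrow $\beta$ occurs in $\rho_i$, there are an $F'$-arrow $\alpha_{i'}$ and an $F''$-arrow $\alpha_{i''}$ such that $\beta$ occurs in both $\rho_{i'}$ and $\rho_{i''}$. Selecting path summands of these relations through $\beta$ yields cyclic summands $\gamma'=\alpha_{i'}(\cdots)$ and $\gamma''=\alpha_{i''}(\cdots)$ of $W$: each carries a single new arrow, $\alpha_{i'}\in F'$ for $\gamma'$ and $\alpha_{i''}\in F''$ for $\gamma''$, and both contain the common old arrow $\beta$. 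This gives~(1)--(3). For~(4), note that every cyclic summand $\alpha_i\rho_i$ of the Keller potential involves exactly one new arrow; thus the arrow shared by $\gamma'$ and $\gamma''$ is the old arrow $\beta$, both cycles fall in the single-new-arrow case, and the stated equivalence holds, the two-new-arrow alternative being the degenerate counterpart recorded for uniformity.

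The step I expect to be delicate is the separation argument: it must be carried out in the free bimodule $P$, where coefficients already live in $C=\k Q/I$, so I would be careful that passing to $C$ introduces no hidden cancellation merging an $F'$-term with an $F''$-term, and that the identification of $\mathcal R$ with the sub-bimodule generated by the $\partial_\beta W$ (and nothing more in degree one) is exactly the content of the Jacobian presentation cited above. Everything else is bookkeeping on which new arrow each path passes through.
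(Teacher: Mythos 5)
Your proof is correct, and its engine is the same as the paper's: both arguments come down to producing a single cyclic derivative $\partial_\beta W$ whose expansion involves a new arrow of $F'$ and a new arrow of $F''$, and then reading off two cycles of $W$ through $\beta$. The packaging, however, is genuinely different. The paper works directly with path expansions in $\k\widetilde Q$: it writes the difference of the two lifts of $e$ as $\sum_\ell t_\ell\, a_\ell\,\partial_{\beta_\ell}W\,b_\ell$ with $\beta_\ell$ ranging over \emph{all} arrows of $\widetilde Q$, old and new; since a derivative with respect to a new arrow cannot be excluded at that level, the cycles obtained may a priori contain a new arrow $\beta_\ell$, which is exactly why the statement carries the ``one or two new arrows'' alternative and condition (4). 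You instead front-load the work into the graded presentation $E\simeq P/\mathcal R$, $P=\bigoplus_i C\bar\alpha_i C$, with $\mathcal R$ generated by the $\partial_\beta W$ for $\beta\in Q_1$ old only (the derivatives $\partial_{\alpha_j}W=\rho_j$ being absorbed into the coefficient ring $C$, and $\mathcal J$ into degree at least two); this identification is indeed correct and is verified by the grading argument you indicate. Your splitting argument $\mathcal R=\mathcal R'\oplus\mathcal R''$ is then a clean and complete substitute for the paper's somewhat loosely justified step that some single term $a_\ell\,\partial_{\beta_\ell}W\,b_\ell$ must contain paths of both kinds. What your route buys: the mixing generator is automatically a derivative with respect to an \emph{old} arrow, so both cycles contain exactly one new arrow and condition (4) holds vacuously --- a slightly sharper conclusion than the paper's. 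One small inaccuracy to fix: the component of $\partial_\beta W$ on $C\bar\alpha_i C$ is \emph{not} nonzero ``exactly when'' $\beta$ occurs in $\rho_i$; after reducing coefficients modulo $I$ that component may vanish even though $\beta$ occurs in $\rho_i$. Only the implication you actually use (nonzero component in $P$ implies $\beta$ occurs in $\rho_i$) is valid, so the argument stands, but the ``exactly'' should be dropped.
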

\begin{proof}
  Let $e$ be a nonzero element in $E'\cap E''$.
  There exist paths $u_1,\ldots,u_m$, $v_1,\ldots,v_n$ and scalars
  $\lambda_1,\ldots,\lambda_m,\mu_1,\ldots, \mu_n$ satisfying
  the following conditions
  \begin{enumerate}[(a)]
  \item $e$ equals both classes of $\sum_i\lambda_i u_i$ and $\sum_j\mu_j v_j$,
  \item each $u_i$ has exactly one arrow from $\widetilde Q_1\backslash
    Q_1$ and that arrow lies in $F'$, we denote this arrow by $\alpha'_i$,
  \item each $v_i$ has exactly one arrow from $\widetilde Q_1\backslash
    Q_1$ and that arrow lies in $F''$, we denote this arrow by $\alpha''_i$.
  \end{enumerate}
  Therefore, there exist paths $a_1,\ldots,a_N,b_1,\ldots,b_N$, 
  scalars $t_1,\ldots,t_N$ and arrows $\beta_1,\ldots,\beta_N$ such
  that
  \[
  \sum_i\lambda_i u_i - \sum_j\mu_j v_j =
  \sum_{\ell}t_{\ell}a_{\ell}\cdot \partial_{\beta_\ell}W\cdot b_{\ell}\,.
  \]
  In view of condition (a) above and because $e\neq 0$, there exists $\ell$ such that the
  expression $a_{\ell}\cdot \partial_{\beta_{\ell}} W\cdot b_{\ell}$
  contains both $u_i$ and $v_j$ for some indices $i,j$.
  Note that neither $\alpha'_i$ nor $\alpha''_j$ appears in some $a_{\ell}$
  or $b_{\ell}$ for, otherwise, both would appear in $u_i$ and $v_j$,
  thus contradicting conditions (b) and (c) above. Hence,  there exist
  oriented cycles $\gamma',\gamma''$ that appear in $W$, that contain
  $\alpha'_i$ and $\alpha''_j$, respectively, and that both contain
  $\beta_{\ell}$.

  Since any cycle in $W$ contains at most one arrow from $\widetilde
  Q_1\backslash Q_1$ it follows that $\gamma'$ contains at most two
  arrows from $\widetilde Q_1\backslash Q_1$ (namely $\alpha_i'\in F'$ and
  possibly $\beta_\ell$). Whence (1). Assertion (2) follows from similar
  considerations. Moreover, $\gamma'$ and $\gamma''$ have the arrow
  $\beta_{\ell}$ in common. This shows (3) and (4).
\end{proof}

In view of the preceding lemma, we define for each direct summand $W'$
of
the potential $W$  in
$\k \widetilde Q$ the subbimodule $E'$ of $E$ as follows: $E'$ is
generated by the classes of arrows in $\widetilde Q_1\backslash Q_1$
appearing in a cycle of $W'$. We call $E'$ the \emph{partial relation
  bimodule} corresponding to $W'$.

\begin{prop}
  \label{sec:from-direct-decomp-2}
  Let $W=W'\oplus W''$ be a direct sum decomposition of the potential. Then
  $E=E'\oplus E''$ where $E'$ and $E''$ are the partial relation
  bimodules corresponding to $W'$ and $W''$, respectively.
\end{prop}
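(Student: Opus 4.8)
The plan is to prove the statement by establishing two things: that $E = E' + E''$ (the bimodules together generate all of $E$) and that the sum is direct, i.e. $E' \cap E'' = 0$. Let me think about each piece.

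First, $E = E' + E''$. We know that $E$ is generated as a $C$-$C$-bimodule by the classes of all new arrows, i.e. all arrows in $\widetilde Q_1 \setminus Q_1$. Each new arrow $\alpha_\rho$ corresponds to a relation $\rho$, and by construction $\alpha_\rho$ appears in the cycle $\alpha_\rho \rho$ which is a summand of the Keller potential $W$. Since $W = W' \oplus W''$ is a direct sum decomposition, every cycle summand of $W$ appears either in $W'$ or in $W''$ (direct sum decomposition partitions the cycles). So every new arrow appears either in a cycle of $W'$ or in a cycle of $W''$, and hence its class is a generator of $E'$ or of $E''$. Therefore $E' + E'' \supseteq$ (all generators of $E$) $= E$, giving $E = E' + E''$.

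Second, and this is where the technical lemma does the work: I need $E' \cap E'' = 0$. Here the partition is $\widetilde Q_1 \setminus Q_1 = F' \cup F''$, where $F'$ (resp. $F''$) is the set of new arrows appearing in cycles of $W'$ (resp. $W''$). This is genuinely a partition because the decomposition $W = W' \oplus W''$ is direct: no new arrow can appear in both a cycle of $W'$ and a cycle of $W''$, since two cycles sharing an arrow are dependent ($\gamma' \sim \gamma''$) and hence would have to lie in the same summand. So $F' \cap F'' = \emptyset$, and $E', E''$ are exactly the subbimodules generated by the classes in $F'$ and $F''$.

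Now I apply the Lemma contrapositively. Suppose $E' \cap E'' \neq 0$. Then the Lemma produces oriented cycles $\gamma', \gamma''$ in $W$ with a common arrow $\beta_\ell$, where $\gamma'$ contains an arrow of $F'$ and $\gamma''$ contains an arrow of $F''$. Since $\gamma'$ and $\gamma''$ share the arrow $\beta_\ell$, we have $\gamma' \sim \gamma''$, hence $\gamma' \approx \gamma''$, so $\gamma'$ and $\gamma''$ are dependent. But $\gamma'$ contains an arrow in $F'$, so $\gamma'$ is a cycle of $W'$; likewise $\gamma''$ is a cycle of $W''$. This directly contradicts the definition of a direct sum decomposition, which requires $\gamma' \not\approx \gamma''$ for every cycle $\gamma'$ of $W'$ and $\gamma''$ of $W''$. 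Hence $E' \cap E'' = 0$.

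The main obstacle is entirely packaged into the technical Lemma, which I am free to invoke: translating the algebraic condition $E' \cap E'' \neq 0$ into the combinatorial statement that two potential cycles in different summands share an arrow. Once that bridge is granted, the proof of the Proposition is a short two-step argument — surjectivity via the partition of the generators, and directness via the contrapositive of the Lemma combined with the definition of directness. The only subtlety to verify carefully is that the decomposition $W = W' \oplus W''$ really induces a genuine partition $F' \cup F''$ with $F' \cap F'' = \emptyset$, which again follows immediately from directness.
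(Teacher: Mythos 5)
Your proof is correct and follows essentially the same route as the paper's: both define the partition $F'\cup F''$ of the new arrows, observe that directness of $W=W'\oplus W''$ forces $F'\cap F''=\emptyset$, deduce $E=E'+E''$ from the fact that the new arrows generate $E$, and obtain $E'\cap E''=0$ by applying the technical lemma contrapositively. In fact you spell out the contradiction (two dependent cycles landing in different summands) more explicitly than the paper's rather terse proof does.
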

\begin{proof}
  Let $F'$ and $F''$ be the set of arrows in $\widetilde Q_1\backslash
  Q_1$ appearing in a cycle from $W'$ and $W''$, respectively. By
  construction of $W$, the union $F'\cup F''$ equals
  $\widetilde Q_1\backslash Q_1$. And because the decomposition
  $W=W'+W''$ is direct, $F'\cap F''=\emptyset$. The preceding lemma
  therefore applies: because $W=W'+W''$ is a direct sum decomposition,
  it entails that $E'\cap E''=0$. On the other hand $E=E'+E''$ because $F'\cup
  F''=\widetilde Q_1\backslash Q_1$.
\end{proof}

It is natural to ask if, conversely, given a direct sum decomposition
of the relation bimodule $E=E'\oplus E''$, one can get a corresponding
decomposition of the potential. The next two subsections are devoted
to this problem.

In order that the converse process be possible, it seems to be needed that a
presentation of the cluster tilted algebra by minimal relations be
given by the potential. It is known that this is not always the case,
see \cite[Example 4.3]{MR3169475}. Recall that, following \cite{MR2271343}, a
\emph{minimal relation} in a bound quiver $(Q,I)$ is any element of
$I$ not lying in $\underline r I+I \underline r$, where $\underline r$
denotes the two-sided ideal of $\k Q$ generated by all the arrows of
$Q$. The problem of finding systems of minimal relations for
a cluster tilted algebra or, more generally, Jacobian algebras of
quivers with potentials, is a basic one. It was first solved for
representation-finite cluster tilted algebras in \cite{MR2271343}, then for
the cluster tilted algebras having a cyclically oriented quiver
\cite{MR3169475}. The latter class includes the representation-finite cluster
tilted 
algebras. Also it was solved for Jacobian algebras arising from
surfaces without punctures and in particular for cluster tilted algebras of type
$\widetilde{\mathbb A}$ in \cite{MR2592019}. We are not aware of other cases where
the solution is known.
We pose the following problem.
\begin{pb}
  Given a system of minimal relations on a Jacobian algebra, which
  conditions are necessary on this system in order for the converse of 
  Proposition~\ref{sec:from-direct-decomp-2} be valid?
\end{pb}

\subsection{Induced decompositions of the potential: the cyclically oriented case}
\label{sec:from-direct-decomp-1}

Here we prove this converse in the two particular cases where systems of
minimal relations are known. We start with
algebras having cyclically oriented quivers.
We recall from \cite{MR3169475} that a quiver is
called \emph{cyclically oriented} if each chordless cycle is an
oriented cycle. Here is a summary
 of the combinatorial
  properties of $\widetilde Q$ that follow from the fact that it is
  cyclically oriented (see \cite[Proposition 1.1, Proposition
  3.5]{MR3169475}).
  \begin{enumerate}[(a)]
  \item Let $a\in \widetilde Q_1$ lie in an oriented cycle. Then the
    sum of all the paths antiparallel to $a$ is a minimal relation.
  \item Any minimal relation is proportional to one as above.
  \item Let $a\in \widetilde Q_1$ lie in an oriented cycle. Then $a$
    has no parallel arrow and two distinct paths antiparallel to $a$
    have no common vertex but their source and target.
  \end{enumerate}
Here, two oriented paths, say from $x$ to $y$ and from $x'$ to $y'$,
respectively, are called \emph{parallel} whenever $x=x'$ and $y=y'$,
and they are called \emph{antiparallel} whenever $x=y'$ and $y=x'$.
\begin{prop}
  \label{sec:from-direct-decomp-3}
  Let $\widetilde C$ be a cluster tilted algebra with a cyclically
  oriented quiver. Assume $E=E'\oplus E''$ is a nontrivial direct sum
  decomposition of $E$ as $C-C$-bimodule. Then there exists a
  nontrivial direct sum
  decomposition $W=W'+W''$ of the Keller potential such that $E'$, $E''$ are
  respectively the partial relation bimodules corresponding to $W', W''$.
\end{prop}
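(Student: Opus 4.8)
The plan is to recover the decomposition of the potential from the way the bimodule decomposition behaves on the top of $E$, and then to upgrade that combinatorial datum to an actual direct sum of potentials by means of a converse to the technical lemma preceding Proposition~\ref{sec:from-direct-decomp-2}. First I would use that every new arrow lies in an oriented cycle of $W$, so that property (c) of cyclically oriented quivers applies: new arrows have no parallel arrows, and in particular no two of them share the same source and target. Hence $\mathrm{top}(E)=E/(\mathrm{rad}\,C\cdot E+E\cdot\mathrm{rad}\,C)$, whose basis is the set of classes of new arrows, is multiplicity-free slot by slot, each space $e_y\,\mathrm{top}(E)\,e_x$ being at most one-dimensional. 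Since $E=E'\oplus E''$ induces a $C$-$C$-bimodule decomposition $\mathrm{top}(E)=\mathrm{top}(E')\oplus\mathrm{top}(E'')$, each one-dimensional slot lies entirely in one summand; this produces a partition $\widetilde Q_1\setminus Q_1=F'\sqcup F''$ with $\mathrm{top}(E')=\langle F'\rangle$ and $\mathrm{top}(E'')=\langle F''\rangle$.

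Next I would set $W'$ (resp. $W''$) to be the sum of those cyclic summands $\alpha_i\rho_i$ of the Keller potential whose unique new arrow $\alpha_i$ lies in $F'$ (resp. $F''$), so that $W=W'+W''$. By construction the partial relation bimodules attached to $W'$ and $W''$ are exactly the subbimodules $\widehat E'$ and $\widehat E''$ generated by the classes of $F'$ and $F''$, and $\widehat E'+\widehat E''=E$. Thus the proposition reduces to two claims: (I) $\widehat E'=E'$ and $\widehat E''=E''$, and (II) the decomposition $W=W'+W''$ is direct. Both hinge on a converse to the technical lemma, namely the statement $(\star)$ that in the cyclically oriented case, whenever two cyclic summands of $W$ carrying differently coloured new arrows share a common arrow, the subbimodules generated by those new arrows have nonzero intersection.

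I would prove $(\star)$ as follows. If two such cycles share an old arrow $\beta$, then by property (a) the cyclic derivative $\partial_\beta W$ is precisely the minimal relation formed by the sum of all paths antiparallel to $\beta$, and by property (c) these antiparallel paths are pairwise disjoint away from their common endpoints, hence linearly independent in $\widetilde C$; by property (b) the relation $\partial_\beta W=0$ is, up to a scalar, the only relation among them, so no proper nonempty subsum of these paths vanishes. Grouping the terms of $\partial_\beta W=0$ according to the colour of the new arrow they contain exhibits a nonzero element of $\widehat E'\cap\widehat E''$, which establishes $(\star)$. Claim (II) then follows at once: a shared arrow between a cycle of $W'$ and a cycle of $W''$ would, through $(\star)$ and (I), force $E'\cap E''\neq 0$, contradicting $E=E'\oplus E''$. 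The delicate point, and what I expect to be the main obstacle, is Claim (I): ruling out "radical leakage", i.e. the possibility that the class $\bar\alpha_i$ of a new arrow in $F'$ splits as $\bar\alpha_i=e'+e''$ with $e''\in E''$ nonzero of positive degree in the grading of $E$ by the number of old arrows, even though its top lies in $\mathrm{top}(E')$. Here I would exploit that a direct summand splits the radical filtration, so that $E=E'\oplus E''$ is compatible with the filtration by old-arrow length, and combine this with the rigidity supplied by property (c): tracing the leading term of such an $e''$ back through the minimal relations $\partial_\beta W$ would again produce two cycles of opposite colour sharing an arrow, whence $(\star)$ yields a nonzero element of $E'\cap E''$, a contradiction. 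This forces $\widehat E'\subseteq E'$ and, symmetrically, $\widehat E''\subseteq E''$; since both pairs sum to $E$, equality follows. The careful bookkeeping of this non-vanishing argument is the technical heart of the proof, and it is exactly the disjointness in property (c) of the cyclically oriented hypothesis that makes it go through.
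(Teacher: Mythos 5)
Your skeleton is the same as the paper's: partition the new arrows according to which summand of $\mathrm{top}(E)=\mathrm{top}(E')\oplus\mathrm{top}(E'')$ each one-dimensional slot falls into, let $W'$ and $W''$ be the corresponding subsums of the Keller potential, and prove directness by splitting a cyclic derivative $\partial_\beta W$ by ``colour'' and using properties (a)--(c) to see that no proper nonempty subsum of a sum of antiparallel paths can lie in $\widetilde I$ (your $(\star)$ is the paper's directness argument repackaged as producing a nonzero element of the intersection). The genuine gap is exactly where you yourself locate the ``technical heart'', your Claim (I). The paper does not close this by bookkeeping: it invokes a stronger structural fact about cyclically oriented quivers, taken from \cite{MR3169475}, which is absent from your list (a)--(c): for $(x,y)\in\Sigma$ the new arrow is the \emph{unique path} from $x$ to $y$ in $\widetilde Q$. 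Hence $e_y\,\mathrm{rad}(E)\,e_x=0$, the slot $e_yEe_x$ is one-dimensional, and the chosen generator $\xi_{(x,y)}$ lies wholly in $E'$ or wholly in $E''$; Nakayama then gives that $E'$, $E''$ are generated by the new arrows of their colour. In other words, ``radical leakage'' is impossible because the radical of $E$ vanishes on the relevant slots --- there is literally nothing to leak --- and this is a fact you would have to state and prove (or cite), not argue around.

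Your proposed substitute does not constitute a proof, for two concrete reasons. First, ``the grading of $E$ by the number of old arrows'' is not well defined: the relations $\partial_\beta W$ mix antiparallel paths of different lengths (distinct oriented cycles through $\beta$ need not have equal length), so only the radical filtration is available, and your ``leading term'' has no canonical meaning. Second, even granting a filtration-compatible decomposition, you give no mechanism by which a hypothetical nonzero $e''\in e_y\,\mathrm{rad}(E)\,e_x$ would produce ``two cycles of opposite colour sharing an arrow'': such an $e''$ is a combination of classes of paths \emph{parallel} to the new arrow $\alpha_{(x,y)}$, while (a)--(c) as you use them only control sums of paths \emph{antiparallel} to a fixed arrow, so $(\star)$ never gets triggered. (A minor slip in the same vein: the antiparallel paths are not ``linearly independent in $\widetilde C$'' --- their full sum is zero there; you mean linearly independent in $\k\widetilde Q$, with the full sum spanning, up to scalar, the only combination lying in $\widetilde I$, which is indeed what your next sentence, and the paper's own terse step, actually use.) So the proposal is a correct outline of the paper's strategy whose decisive step is missing; it becomes a proof once you import the unique-path property of \cite{MR3169475}.
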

\begin{proof}
  The direct sum decomposition of $C-C$-bimodules  $E=E'\oplus E''$ induces a
  decomposition ${\rm top}\,E={\rm top}\,E'\oplus {\rm top}\,E''$. Let 
  $\Sigma$ be the set of couples $(x,y)$ of vertices such that ${\rm
    Ext}^2_C(S_x,S_y)\neq 0$. Recall that ${\rm dim}_{\k}\,{\rm
    Ext}^2_C(S_x,S_y)\leqslant 1$ for any couple $(x,y)$. Hence there
  exists a nontrivial partition $\Sigma=\Sigma'\cup \Sigma''$ such that
  $e_y{\rm top}(E')\,e_x=e_y{\rm top}(E)x_x$ if $(x,y)\in \Sigma'$ and
  $e_y{\rm top}(E'')\,e_x=e_y{\rm top}(E)x_x$ if $(x,y)\in
  \Sigma''$.
  Since $\widetilde Q$ is cyclically oriented, if
  $(x,y)\in \Sigma$, then the arrow $y\to x$ in $Q_{\widetilde C}$
  corresponding to the one-dimensional vector space ${\rm
    Ext}^2_C(S_x,S_y)$ is the unique path from $x$ to $y$ in $\widetilde
  Q$ (see \cite{MR3169475}). In particular
  $e_y\cdot {\rm rad}(E)\cdot e_x=0$. Hence $e_yEe_x=e_yE'e_x$ or
  $e_yEe_x=e_yE''e_x$ according to whether $(x,y)\in \Sigma'$ or
  $(x,y)\in \Sigma''$.

  For every couple $(x,y)\in \Sigma$, let $\alpha_{(x,y)}\colon y\to x$
  be the corresponding arrow in $\widetilde Q$,
  let $r_{(x,y)}\in e_x\k Qe_y$ be a corresponding generator of $I$, and
  let $\xi_{(x,y)}\in {\rm Ext}^2_C(I_x,P_y)$ be a corresponding element
  in ${\rm Ext}^2_C(DC,C)$. Therefore we have
  \begin{enumerate}[(i)]
  \item $W=\sum_{(x,y)\in \Sigma} \alpha_{(x,y)}r_{(x,y)}$, 
  \item $E'$ is generated by $\{\xi_{(x,y)}\ |\ (x,y)\in
    \Sigma'\}$, and
  \item $E''$ is generated by $\{\xi_{(x,y)}\ |\ (x,y)\in
    \Sigma''\}$.
  \end{enumerate}
  Let $W'=\sum_{(x,y)\in \Sigma'}\alpha_{(x,y)}r_{(x,y)}$ and
  $W''=\sum_{(x,y)\in \Sigma''}\alpha_{(x,y)}r_{(x,y)}$. Hence
  $W=W'+W''$. To prove that this is a direct sum decomposition of $W$
  inducing the direct sum decomposition $E=E'\oplus E''$, it suffices
  to prove that no arrow of $\widetilde Q$
  appears simultaneously in a cycle of $W'$ and in a cycle of
  $W''$.
  
  By contradiction, assume there exists an arrow $a$ appearing simultaneously
  in a cycle of $W'$ and in a cycle of $W''$. Because of the definition of $W'$ and $W''$, the arrow $a$ is distinct from any
  $\alpha_{(x,y)}$, for $(x,y)\in \Sigma$. Therefore we have
  \begin{equation}
    \label{eq:1}
    \begin{array}{rcl}
      \partial_aW
      & = &
            \partial_aW' + \partial_aW''\\
      & = &
            \sum_{(x,y)\in \Sigma'} \varphi_{(x,y)} \alpha_{(x,y)}\psi_{(x,y)} +
            \sum_{(x,y)\in \Sigma''} \varphi_{(x,y)} \alpha_{(x,y)} \psi_{(x,y)}
    \end{array}
  \end{equation}
  where, in the second row, $\varphi_{(x,y)}$ and $\psi_{(x,y)}$ denote elements in $\k
  Q$. Note that each one of the two terms of this row is nonzero in $\k
  \widetilde Q$ because
  $\Sigma'$ and $\Sigma''$ are nonempty. Since $\partial_aW\in
  \widetilde I$, the expression (\ref{eq:1}) yields that
  \[
  \sum_{(x,y)\in \Sigma'} \varphi_{(x,y)} \alpha_{(x,y)}\psi_{(x,y)} +
  \widetilde I = 
  \sum_{(x,y)\in \Sigma''} \varphi_{(x,y)} \alpha_{(x,y)} \psi_{(x,y)}
  + \widetilde I\
  \]
  where the left-hand side  lies in $E'$ and the right-hand side
  lies in $E''$. Since $E'\cap E''=0$ it follows that both terms
\[  
\begin{array}{ll}
  \sum_{(x,y)\in \Sigma'} \varphi_{(x,y)} \alpha_{(x,y)}\psi_{(x,y)} &
  \text{and}\\
  \sum_{(x,y)\in
  \Sigma''} \varphi_{(x,y)} \alpha_{(x,y)} \psi_{(x,y)}
\end{array}
\]
are nonzero and lie in
  $\widetilde I$. Considering (c) above, both are nontrivial linear
  combinations of partial derivatives of $W$ with respect to arrows
  parallel to $a$. This contradicts (c). Thus the decomposition
  $W=W'\oplus W''$ is direct.
\end{proof}

Moreover, in the present situation, the direct sum decompositions of
the relation bimodule assume particularly nice forms.
\begin{cor}
  \label{sec:from-direct-decomp-6}
  Let $\widetilde C$ be a cluster tilted algebra with cyclically
  oriented quiver. Assume $E=E'\oplus E''$ is a direct sum
  decomposition. Then there exist direct sum decompositions
  $C_C=P'\oplus P''$ and $D(C)_C=I'\oplus I''$ such that $E'={\rm
    Ext}^2_{C}(I',P')$ and $E''={\rm Ext}^2_C(I'',P'')$.
\end{cor}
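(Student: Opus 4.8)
The plan is to promote the bimodule decomposition $E=E'\oplus E''$ to decompositions of $C_C$ and of $D(C)_C$ that are adapted to the bifunctor ${\rm Ext}^2_C(-,-)$. I would start from the additivity isomorphism $E={\rm Ext}^2_C(DC,C)=\bigoplus_{x,y}{\rm Ext}^2_C(I_x,P_y)$ coming from the decompositions $D(C)_C=\bigoplus_x I_x$ and $C_C=\bigoplus_y P_y$ into indecomposables, and from the data already produced in the proof of Proposition~\ref{sec:from-direct-decomp-3}: the partition $\Sigma=\Sigma'\cup\Sigma''$, the bound $\dim_{\k}{\rm Ext}^2_C(S_x,S_y)\leq 1$, and the \emph{direct} decomposition $W=W'\oplus W''$ of the Keller potential, with $E'$ and $E''$ generated respectively by the classes $\xi_{(x,y)}$ for $(x,y)\in\Sigma'$ and $(x,y)\in\Sigma''$. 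The idea is that, because $\widetilde Q$ is cyclically oriented, each such generator sits in a one-dimensional piece ${\rm Ext}^2_C(I_x,P_y)$, so that $E'$ and $E''$ are unions of such pieces.

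Next I would single out, for each summand, the indecomposable injectives and projectives that actually occur in it. Concretely, let $A'$ (resp. $A''$) be the set of vertices $x$ with $e_x\cdot E'\neq 0$ (resp. $e_x\cdot E''\neq 0$), and $B'$ (resp. $B''$) the set of vertices $y$ with $E'\cdot e_y\neq 0$ (resp. $E''\cdot e_y\neq 0$); then set $I'=\bigoplus_{x\in A'}I_x$, $P'=\bigoplus_{y\in B'}P_y$, and similarly for $I''$, $P''$, assigning the vertices carrying no relation to the double-primed summands. By construction one has $E'\subseteq{\rm Ext}^2_C(I',P')$ and $E''\subseteq{\rm Ext}^2_C(I'',P'')$, so the whole content of the statement is that these inclusions are equalities and that $A'\sqcup A''$ and $B'\sqcup B''$ are genuine partitions of $Q_0$.

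The main obstacle is precisely to show that $A'\cap A''=\emptyset$ and $B'\cap B''=\emptyset$, that is, that no indecomposable injective and no indecomposable projective feeds both $E'$ and $E''$. Granting this, additivity of ${\rm Ext}^2_C(-,-)$ forces the cross terms ${\rm Ext}^2_C(I',P'')$ and ${\rm Ext}^2_C(I'',P')$ to vanish, whence $E'={\rm Ext}^2_C(I',P')$ and $E''={\rm Ext}^2_C(I'',P'')$, and the decompositions $C_C=P'\oplus P''$, $D(C)_C=I'\oplus I''$ are immediate. To establish the disjointness I would argue by contradiction: assuming a vertex lies in, say, $B'\cap B''$ means that two minimal relations with the same target contribute, one to $W'$ and one to $W''$. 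Invoking the cyclically oriented hypothesis — that an arrow in an oriented cycle has no parallel arrow, that two distinct paths antiparallel to it meet only at their endpoints, and that every minimal relation is the sum of such antiparallel paths (properties (a)--(c) above) — I would show that the two cycles of $W$ carrying these relations are forced to share an arrow, contradicting the directness of $W=W'\oplus W''$ given by Proposition~\ref{sec:from-direct-decomp-3}. The delicate point, on which the whole argument rests, is controlling how the radical of the bimodule $E$ spreads among the pieces ${\rm Ext}^2_C(I_x,P_y)$ and checking that this enforced overlap of arrows is genuinely produced by the combinatorics of the cyclically oriented quiver; this is where the cluster tilted hypothesis enters, through the validity of the description of minimal relations recalled before Proposition~\ref{sec:from-direct-decomp-2}.
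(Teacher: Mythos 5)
Your reduction is sound, and it isolates exactly the right issue: since each piece ${\rm Ext}^2_C(I_x,P_y)$ is at most one-dimensional, every nonzero piece lies wholly in $E'$ or wholly in $E''$, and the whole content of the corollary is that these pieces can be separated into two ``rectangles'' $S'\times T'$ and $S''\times T''$ with disjoint sets of injective indices and disjoint sets of projective indices. This is also precisely the point that the paper's own two-line proof passes over in silence. The gap in your proposal is the step where you claim to establish this disjointness: directness of $W=W'\oplus W''$ forbids a cycle of $W'$ and a cycle of $W''$ from sharing an \emph{arrow}, but it does not forbid them from sharing a \emph{vertex}, and none of the properties (a)--(c) of cyclically oriented quivers excludes that either. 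When two minimal relations share their source vertex (or their target vertex), the corresponding cycles of the potential are still independent, yet the injective (resp. projective) attached to that common vertex necessarily feeds both $E'$ and $E''$, so the contradiction you hope to derive never materialises.

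This is not a repairable defect of your argument, because the statement itself fails in that situation. Take $C$ given by the quiver $1\leftarrow 2\leftarrow 3\rightarrow 4\rightarrow 5$ bound by the two paths of length two, $3\to 2\to 1$ and $3\to 4\to 5$. Then $C$ is tilted of type $\mathbb A_5$ (the modules $P_2$, $S_2$, $P_3$, $S_4$, $P_4$ form a complete slice whose endomorphism algebra is hereditary), and $\widetilde C$ is the representation-finite cluster tilted algebra of type $\mathbb A_5$ whose quiver consists of two oriented $3$-cycles glued at the vertex $3$; this quiver is cyclically oriented, and the two cycles share no arrow, so $W=W'\oplus W''$ is a direct sum decomposition and it induces a bimodule decomposition $E=E'\oplus E''$ as in Proposition~\ref{sec:from-direct-decomp-2}. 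One computes that the nonzero components of $E$ are exactly ${\rm Ext}^2_C(I_3,P_1)$ and ${\rm Ext}^2_C(I_4,P_1)$, which span $E'$, and ${\rm Ext}^2_C(I_3,P_5)$ and ${\rm Ext}^2_C(I_2,P_5)$, which span $E''$, each being one-dimensional. Decompositions $D(C)_C=I'\oplus I''$ and $C_C=P'\oplus P''$ with $E'={\rm Ext}^2_C(I',P')$ and $E''={\rm Ext}^2_C(I'',P'')$ would force $I_3$ to be a direct summand of both $I'$ and $I''$, which is impossible since $I_3$ occurs with multiplicity one in $DC$. So the disjointness on which your proof rests is false in general; the corollary needs an extra hypothesis guaranteeing that the injectives and projectives supporting $E'$ are distinct from those supporting $E''$ (the paper's running example satisfies this because its two relations cross only at an \emph{inner} vertex, not at a source or target). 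In the end your proposal does not diverge from the paper's strategy: it makes explicit the unproved separation step on which the paper's own proof also rests, and that step cannot be carried out.
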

\begin{proof}
  As explained in the proof of
  Proposition~\ref{sec:from-direct-decomp-3}, given vertices $x,y$,
  the vector space ${\rm Ext}^2_C(D(Ce_x),e_yC)$ has dimension $0$ or
  $1$. The claimed decompositions of $DC$ and $C$ follow from this property.
\end{proof}

Note that the corollary implies that ${\rm Ext}^2_C(I'',P')={\rm Ext}^2_C(I',P'')=0$.

\begin{exple}
  Let $C$ be the tilted algebra given by the quiver
  \[
  \xymatrix{
    1 & & 4 \ar[ld]_\alpha \\
    & 3 \ar[lu]_\beta  \ar[ld]^\delta& \\
    2 &  & 5 \ar[lu]_\gamma
  }
  \]
  bound by $\alpha \beta=0$, $\gamma\delta=0$. It is easily verified
  that $E={\rm Ext}^2_C(DC,C) = {\rm Ext}^2_C(I_4,P_1) \oplus {\rm
    Ext}^2_C(I_5,P_2)$. Moreover $\widetilde C$ is given by the quiver 
  \[
  \xymatrix{
    1 \ar[rr]^\lambda & & 4 \ar[ld]^\alpha \\
    & 3 \ar[lu]^\beta \ar[ld]_\delta \\
    2 \ar[rr]_\mu & & 5 \ar[lu]_\gamma
  }
  \]
  with potential  $W=\alpha\beta\lambda +\gamma\delta\mu$. As seen in
  example (1.b) of \ref{sec:decomp-potent-exampl}, this is a direct sum
  decomposition of the potential $W$. It is easily seen that
  it corresponds to the direct sum decomposition $E=E'\oplus
  E''$ with the summand $\alpha \beta \lambda$ corresponding to
  $E'={\rm Ext}^2_C(I_4,P_1)$ and $\delta\gamma\mu$ corresponding to
  $E''= {\rm
    Ext}^2_C(I_5,P_2)$.
\end{exple}

\subsection{Induced decompositions of the potential: the $\widetilde{\mathbb A}$
  case}
\label{sec:from-direct-decomp-4}

Another case where the Keller potential is known to induce a system of
minimal relations is the case of cluster tilted algebras of type  $\widetilde{\mathbb A}$ (see
\cite{MR2592019}). Therefore, in this case also we can deduce a decomposition
of the Keller potential starting from a decomposition of the relation
bimodule. The
proof is  different from that of the cyclically oriented case. It
relies on the fact that the cluster tilted algebra $\widetilde C=\k
\widetilde Q/\widetilde I$ is gentle and on the following specific
combinatorial properties of 
$\widetilde Q$.
\begin{lem}
  \label{sec:from-direct-decomp-5}
  Let $i,j$ be vertices such that there exists an arrow
   $\alpha\colon i\to j$  in $\widetilde Q\backslash
  Q$ and such that $e_i{\rm rad}(E)e_j\neq 0$. Consider a path $u\beta v$
  from $i$ to $j$
  such that $u,v$ lie in $Q$ and are not both
  trivial, such that $\beta\colon i'\to j'$ is an
  arrow in $\widetilde Q\backslash Q$ and such that the class of
  $u\beta v$ in $e_i{\rm
    rad}(E) e_j$ is nonzero. Then
  \begin{enumerate}
  \item no arrow is parallel to $\beta$ (or $\alpha$),
  \item $\alpha$ and $u\beta v$ are the only paths in $\widetilde Q$
    not lying in $\widetilde I$, in particular $e_iEe_j$ is generated
    by $\alpha +\widetilde I$ and $u\beta v+\widetilde I$ and $e_i{\rm
      rad}(E)e_j$ is generated by $u\beta v+\widetilde I$, and
  \item  $e_{i'}{\rm rad}(E)e_{j'}=0$.
  \end{enumerate}
\end{lem}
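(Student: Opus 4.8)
The plan is to exploit that $\widetilde{C}$ is gentle, so at each vertex there are strong local constraints on how arrows can be composed without landing in $\widetilde{I}$. I would begin by analysing the hypothesis $e_i\,{\rm rad}(E)\,e_j\neq 0$, which by definition of $E={\rm Ext}^2_C(DC,C)$ and the bimodule structure means that besides the ``new'' arrow $\alpha\colon i\to j$ there is at least one longer path from $i$ to $j$ passing through a second new arrow $\beta$, whose class in $E$ is nonzero. My first step is to record that every new arrow $\alpha_\rho$ closes an oriented cycle $\alpha_\rho\rho$ with its relation $\rho$, and that in the gentle presentation the relations $\partial_\beta W$ are precisely the length-two zero-relations coming from the potential. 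The existence of the nonzero class of $u\beta v$ forces $u\beta v$ to avoid all the defining relations, which I would use to pin down the shapes of $u$ and $v$.

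For assertion (1), I would argue by contradiction. If some arrow were parallel to $\beta$ (or to $\alpha$), then because $\widetilde{C}$ is gentle and each arrow of $\widetilde{Q}\setminus Q$ sits in a unique oriented cycle of $W$, one would produce two distinct paths between the same pair of vertices that cannot be distinguished by the gentle relations, contradicting the one-dimensionality of the relevant ${\rm Ext}^2$ spaces (equivalently, of the corresponding hom/ext graded pieces). Concretely, a parallel arrow would either create a commutativity-type relation incompatible with gentleness, or it would give a second independent generator of $e_iEe_j$ beyond $\alpha+\widetilde I$ and $u\beta v+\widetilde I$, which the gentle structure forbids. I expect the careful bookkeeping here to hinge on the classification of arrows meeting at a given vertex in a gentle algebra: at most two arrows in and two out, with prescribed composability.

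For assertion (2), the plan is to enumerate all paths from $i$ to $j$ in $\widetilde{Q}$ and show each one other than $\alpha$ and $u\beta v$ lies in $\widetilde{I}$. Using gentleness, any path from $i$ to $j$ is determined by its sequence of arrows, and a path survives modulo $\widetilde{I}$ only if it never traverses a forbidden length-two subpath. Since by (1) no arrow is parallel to $\alpha$ or $\beta$, the branching at each intermediate vertex is controlled, and I would trace out that the only admissible walks realising a nonzero class are the two named ones; the statement that $e_iEe_j$ is generated by $\alpha+\widetilde I$ and $u\beta v+\widetilde I$, and $e_i\,{\rm rad}(E)e_j$ by $u\beta v+\widetilde I$, then follows immediately since ${\rm rad}(E)$ discards the direct-summand contribution of the single new arrow $\alpha$.

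Finally, for assertion (3), I would observe that $e_{i'}\,{\rm rad}(E)e_{j'}\neq 0$ would, by the same reasoning applied to the new arrow $\beta\colon i'\to j'$, force a further path $u'\beta' v'$ through yet another new arrow from $i'$ to $j'$; composing this with the segments $u,v$ of the original path would manufacture too long or too intertwined a path through $i$ and $j$, exceeding what gentleness and the one-dimensionality of the ${\rm Ext}^2$ spaces permit, hence a contradiction. The main obstacle I anticipate is assertion (2): ruling out \emph{all} other paths requires a genuinely exhaustive use of the gentle combinatorics rather than a single clean inequality, and keeping the case analysis of intermediate vertices finite and complete is where the real work lies; assertions (1) and (3) should then fall out comparatively quickly from the structure established there.
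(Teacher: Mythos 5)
Your proposal has a genuine gap, and it sits exactly where the real content of the lemma lies. Both your argument for (1) and your argument for (3) rest on ``the one-dimensionality of the relevant ${\rm Ext}^2$ spaces'' together with the claim that a parallel arrow would give ``a second independent generator of $e_iEe_j$ \ldots which the gentle structure forbids.'' Neither is available. Gentleness does not forbid two-dimensional spaces $e_xEe_y$, and for cluster tilted algebras of type $\widetilde{\mathbb A}$ such spaces genuinely occur: in the paper's example at the end of Section~\ref{sec:from-direct-decomp-4} (potential $W=\alpha\beta\gamma+\lambda\mu\nu$, two parallel new arrows $\gamma,\nu\colon 1\to 4$) one has $\dim e_1\,{\rm top}(E)\,e_4=2$; this is precisely the case $\Sigma_3$ in the proof of Proposition~\ref{sec:from-direct-decomp-7}. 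So one-dimensionality is false in general, and under the hypothesis $e_i{\rm rad}(E)e_j\neq 0$ the absence of parallel arrows is exactly what assertion (1) asks you to prove; assuming it is circular. A purely local count does dispose of an arrow parallel to $\alpha$ (three arrows would then leave $i$, impossible in a gentle quiver), but no local count rules out an arrow $\beta'$ parallel to $\beta$: the arrows $\beta,\beta'$ are only two arrows out of $i'$, and the two oriented triangles attached to them are perfectly compatible with the gentle conditions at $i'$ and $j'$.

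The missing idea is the global input the paper uses: the characterisation of cluster tilted algebras of type $\widetilde{\mathbb A}$ from \cite{MR2592019}. In the paper's proof of (1), a parallel arrow $\beta'$ forces, via gentleness, factorisations $u=u'd'$ and $v=c'v'$ through the triangles attached to $\beta$ and $\beta'$, whence $\widetilde C$ contains \emph{two} full subcategories hereditary of type $\widetilde{\mathbb A}$: the non-oriented cycle formed by $u'$, $d'\beta c'v'$ and $\alpha$, and the Kronecker pair $\beta,\beta'$; this contradicts the classification, since an algebra in that class carries only one such non-oriented cycle. Assertion (3) is handled the same way: a nonzero class $w$ in $e_{i'}{\rm rad}(E)e_{j'}$ yields the two ``parallel paths'' subcategories on $\{i',j'\}$ and on $\{i,j\}$, again two hereditary subcategories of type $\widetilde{\mathbb A}$, again a contradiction. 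Note that the paper deliberately does \emph{not} compose $w$ with $u$ and $v$ as you propose: such a composite may well land in $\widetilde I$, so your ``too long or too intertwined a path'' step has no precise content that could replace the classification argument. Your treatment of (2) (exhaustive path enumeration using gentleness) is fine and matches the paper, which derives (2) directly from gentleness; but (1) and (3) as you outline them cannot be completed without the global structure theorem for type $\widetilde{\mathbb A}$.
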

\begin{proof}
  (1) Should $\alpha$ have a parallel arrow $\alpha'$, that arrow
  would lie in $\widetilde Q_1\backslash Q_1$. Since $(\widetilde
  Q,\widetilde I)$ is a gentle bound quiver, the path $u\beta v$ would
  start with $\alpha$ or $\alpha'$ and end with $\alpha$ and
  $\alpha'$. The path $u\beta v$ would therefore contain two arrows
  from $\widetilde Q_1\backslash Q_1$ instead of only one, namely $\beta$. This
  proves that no arrow is parallel to $\alpha$.

  By contradiction, assume that $\beta$ has a parallel arrow
  $\beta'$. Then $\beta'$ lies in $\widetilde Q_1\backslash Q_1$. Moreover
  $(\widetilde Q,\widetilde I)$ contains the following bound quivers
  \[
  \xymatrix{
    &  \bullet \ar[ld]_b\\
    i \ar[rr]_{\alpha} & & j \ar[lu]^a
  }\ \text{and}\
  \xymatrix{
    & \bullet \ar[dl]_d & \\
    \bullet \ar@<2pt>[rr]^{\beta} \ar@<-2pt>[rr]_{\beta'} & & \bullet
    \ar[lu]_c \ar[ld]^{c'}\\
    & k \ar[lu]^{d'} & 
  }
  \]
  with relations all paths of length $2$ in any triangle. Moreover,
  there exist paths $u'$ and $v'$ in $Q$ with
  sources $i$ and $k$, respectively, and with targets $k$ and $j$,
  respectively such that $u=u'd'$ and $v=c'v'$, and hence $u\beta
  v=u'd'\beta c'v'$. As a consequence, $\widetilde C$ contains the
  following two full subcategories that are hereditary of type
  $\widetilde{\mathbb A}$
  \[
  \xymatrix{
    & k \ar[rd]^{d'\beta c'v}\\
    i \ar[ru]^{u'} \ar[rr]_{\alpha} & & j 
    & \text{and} & 
    \bullet \ar@<2pt>[r]^\beta \ar@<-2pt>[r]_{\beta'} & \bullet\
  }
  \]
  Note that these subcategories are indeed full because $(\widetilde
  Q,\widetilde I)$ is a gentle bound quiver. The existence of these two
  subcategories is a contradiction to the characterisation of
  cluster tilted algebras of type $\widetilde{\mathbb A}$,
  see \cite{MR2592019}.

  \medskip

  (2) This follows from the fact that $(\widetilde Q,\widetilde I)$ is a
  gentle  bound quiver.

  \medskip

  (3) 
  There only remains to prove that $e_{i'}{\rm rad}(E)e_{j'}=0$. If this
  was not the case, there would exist a path $w$ parallel to $\beta$, 
  not lying in $Q$, and such that $w\not\in \widetilde I$. According to
  (2), the paths $\beta$ and $w$ would be the only paths  in $\widetilde
  Q$ from $i'$ to $j'$. Hence $\widetilde C$ would have the following
  two full subcategories $\xymatrix{i' \ar@<2pt>[r]^{w} \ar@<-2pt>[r]_{\beta} &
    j'}$ and $\xymatrix{i \ar@<2pt>[r]^{u\beta v} \ar@<-2pt>[r]_{\alpha} &
    j}$. These are hereditary categories of type $\widetilde{\mathbb
    A}$. This would again contradict the classification of cluster tilted
  algebras of type $\widetilde{\mathbb A}$, see \cite{MR2592019}. Thus
  $e_{i'}{\rm rad}(E)e_{j'}=0$.
\end{proof}

Here is the construction of direct sum decomposition of the potential $W$
starting from direct sum decompositions of $E$ in the case of
cluster tilted algebras of type $\widetilde{\mathbb A}$.
\begin{prop}
  \label{sec:from-direct-decomp-7}
  Let $\widetilde C$ be a cluster tilted algebra of type
  $\widetilde{\mathbb A}$. Assume $E=E'\oplus E''$ is a direct sum
  decomposition of $E$ as $C-C$-bimodule. Then there exists a
  direct sum decomposition $W=W'\oplus W''$ of the Keller potential such that $E'$, $E''$ are
  respectively the partial relation bimodules corresponding to $W', W''$.
\end{prop}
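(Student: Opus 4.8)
The plan is to adapt the strategy used for the cyclically oriented case (Proposition~\ref{sec:from-direct-decomp-3}), the essential new feature being that $\mathrm{rad}(E)$ need no longer vanish at a given pair of vertices, as it did there. The combinatorial control needed to compensate for this is exactly what Lemma~\ref{sec:from-direct-decomp-5} provides, together with the gentleness of $\widetilde C$. I therefore expect the distribution of the radical of $E$ between $E'$ and $E''$ to be the crux of the argument.

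First I would partition the new arrows. The decomposition $E=E'\oplus E''$ gives $\mathrm{top}\,E=\mathrm{top}\,E'\oplus\mathrm{top}\,E''$ as $C$-$C$-bimodules, hence a decomposition respecting the grading by pairs $(i,j)$ of vertices. By Lemma~\ref{sec:from-direct-decomp-5}(1) no new arrow admits a parallel arrow, so each component $e_i(\mathrm{top}\,E)e_j$ is spanned by the class of the unique new arrow from $i$ to $j$, when it exists, and is thus at most one-dimensional. Therefore the top class of every new arrow lies entirely in $\mathrm{top}\,E'$ or in $\mathrm{top}\,E''$, which yields a partition $\widetilde Q_1\setminus Q_1=F'\sqcup F''$. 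I then set $W'=\sum_{\alpha\in F'}\alpha\rho_\alpha$ and $W''=\sum_{\alpha\in F''}\alpha\rho_\alpha$, where $\alpha\rho_\alpha$ denotes the cyclic summand of $W$ indexed by $\alpha$, so that $W=W'+W''$.

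Next I would check that $E'$ and $E''$ are precisely the partial relation bimodules attached to $W'$ and $W''$, that is, $E'=\langle\overline\alpha:\alpha\in F'\rangle$ and likewise for $E''$. Since $E$ has a basis given by the classes of the paths of $\widetilde Q$ containing exactly one new arrow, it suffices to show that the given decomposition is compatible with this basis, namely that each such class belongs to $E'$ or to $E''$. For the class $\overline\alpha$ of a single new arrow this is the previous step. For a radical class $\overline{u\beta v}$ as described in Lemma~\ref{sec:from-direct-decomp-5}, it lies in the subbimodule generated by $\overline\beta$, hence in whichever summand contains $\beta$; moreover Lemma~\ref{sec:from-direct-decomp-5}(2) shows $e_i\mathrm{rad}(E)e_j$ to be at most one-dimensional, and $\mathrm{rad}(E)=\mathrm{rad}(E')\oplus\mathrm{rad}(E'')$, so $\overline{u\beta v}$ is indeed homogeneous. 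The hard part is to handle a pair $(i,j)$ supporting simultaneously a top class $\overline\alpha$ and a radical class $\overline{u\beta v}$ with $\alpha\neq\beta$: one must rule out that the splitting mixes the one-dimensional top line with the one-dimensional radical line, which would leave $\overline\alpha$ in neither summand. Here I would invoke Lemma~\ref{sec:from-direct-decomp-5}(3), the vanishing $e_{i'}\mathrm{rad}(E)e_{j'}=0$ at the source and target of $\beta$, to disentangle the roles of $\alpha$ and $\beta$ and force $\overline\alpha$ to be homogeneous. Granting this, $E'$ and $E''$ coincide with the partial relation bimodules, and in particular $\langle F'\rangle\cap\langle F''\rangle=E'\cap E''=0$.

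It remains to prove that $W=W'+W''$ is a direct sum decomposition, i.e. that no arrow occurs at once in a cycle of $W'$ and in a cycle of $W''$. As a new arrow sits in a single cyclic summand of $W$, such a shared arrow would be an old arrow $a\in Q_1$ lying in two cyclic summands $\alpha'\rho_{\alpha'}$ and $\alpha''\rho_{\alpha''}$ with $\alpha'\in F'$ and $\alpha''\in F''$. The argument must depart from the cyclically oriented case, because gentleness makes $\partial_aW$ a sum of monomials each individually lying in $\widetilde I$, so the plain computation $\partial_aW=\partial_aW'+\partial_aW''$ no longer yields a contradiction by itself. Instead I would argue that a shared old arrow couples the two cyclic summands through the gentle structure and, via Lemma~\ref{sec:from-direct-decomp-5}, produces a nonzero radical element of $E$ generated both by $\overline{\alpha'}$ and by $\overline{\alpha''}$, hence lying in $E'\cap E''=0$, a contradiction; equivalently, this amounts to showing that the partition $F'\sqcup F''$ respects the dependency relation $\approx$ on the cycles. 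Once directness is established, $W=W'\oplus W''$ induces the prescribed decomposition $E=E'\oplus E''$, which completes the proof.
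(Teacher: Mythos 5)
Your opening step contains a genuine error that undermines the rest of the argument. You claim that Lemma~\ref{sec:from-direct-decomp-5}(1) implies that no new arrow admits a parallel arrow, hence that every component $e_i(\mathrm{top}\,E)e_j$ is at most one-dimensional and that the class of every new arrow is automatically homogeneous (i.e.\ lies in $E'$ or in $E''$). But Lemma~\ref{sec:from-direct-decomp-5} is conditional: all of its conclusions, including part (1), are proved only for pairs $(i,j)$ supporting a new arrow \emph{and} satisfying $e_i\mathrm{rad}(E)e_j\neq 0$. Parallel new arrows do occur in type $\widetilde{\mathbb A}$: in the paper's own example following this very proposition, $\gamma,\nu\colon 1\to 4$ are parallel new arrows (potential $W=\alpha\beta\gamma+\lambda\mu\nu$), and the corresponding component of $\mathrm{top}\,E$ is two-dimensional. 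This is exactly the case the paper isolates as $\Sigma_3$ (pairs with $e_i\mathrm{rad}(E)e_j=0$ and $\dim e_i\mathrm{top}(E)e_j=2$), and there one must rule out a splitting that mixes the two lines, e.g.\ $e_iE'e_j=\mathrm{Span}(\overline\alpha+\overline a)$ and $e_iE''e_j=\mathrm{Span}(\overline\alpha-\overline a)$. The paper excludes this by right multiplication by the arrow $\beta$ with $\alpha\beta\in\widetilde I$ but $a\beta\notin\widetilde I$ (gentleness): a mixed splitting would force the nonzero element $\overline{a\beta}$ into both $E'$ and $E''$. Since your partition $F'\sqcup F''$ of the new arrows is never justified at such pairs, the definition of $W'$ and $W''$, and everything built on it, is unsupported.

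Two further points. In the $\Sigma_2$ case, what you call the hard part is indeed where the real mechanism lives, and invoking Lemma~\ref{sec:from-direct-decomp-5}(3) alone does not settle it: that item only gives $(i',j')\in\Sigma_1$, i.e.\ that $\overline\beta$ is homogeneous. To force $\overline\alpha$ to be homogeneous the paper again multiplies: if $a$ is the arrow with $\alpha a\in\widetilde I$ coming from the $3$-cycle through $\alpha$, then gentleness gives $u\beta va\notin\widetilde I$, so a mixed decomposition $\overline\alpha=\lambda\,\overline{u\beta v}+\overline{(\alpha-\lambda u\beta v)}$ with $\lambda\neq 0$ produces the nonzero element $\lambda\,\overline{u\beta va}\in E'\cap E''$, a contradiction. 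Finally, your worry about directness at the end is unnecessary, and the detour through radical elements of $E'\cap E''$ is not needed: because $(\widetilde Q,\widetilde I)$ is gentle, two distinct cyclic summands $\alpha a_\alpha b_\alpha$ and $\beta a_\beta b_\beta$ of $W$ share no arrow whatsoever, so \emph{any} partition of the new arrows automatically yields a direct sum decomposition of $W$.
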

\begin{proof}
  Let $\Sigma$ be the set of couples of vertices $(x,y)$ such that
  $e_x{\rm top}(E)e_y\neq 0$. Note that $e_x{\rm top}(E)e_y$ has
  dimension at most $2$ for any couple of vertices $(x,y)$ because
  $(\widetilde Q,\widetilde I)$ is a gentle bound quiver. According to
  the preceding lemma, the set
  $\Sigma$ admits the  partition $\Sigma=\Sigma_1\cup
  \Sigma_2\cup \Sigma_3$  where
  \begin{itemize}
  \item $\Sigma_1$ is the set of couples $(x,y)$ such that $e_x{\rm
      rad}(E)e_y=0$ and $e_x{\rm top}(E)e_y$ has dimension $1$,
  \item $\Sigma_2$ is the set of couples $(x,y)$ such that $e_x{\rm
      rad}(E)e_y\neq 0$,
  \item $\Sigma_3$ is the set of couples $(x,y)$ such that $e_x{\rm
      rad}(E)e_y=0$ and $e_x{\rm top}(E)e_y$ has dimension $2$.
  \end{itemize}
  In what follows we make a detailed study of these sets. Note that if
  $(i,j)\in \Sigma_1$ then ${\rm dim}(e_iEe_j)=1$. Therefore, $(i,j)\in
  \Sigma_1$ implies that
  \begin{equation}
    \label{eq:2}
    \begin{array}{lcl}
      \left\{
      \begin{array}{rcl}
        e_iEe_j & = & e_iE'e_j\\
        0 & =& e_iE''e_j
      \end{array}\right. &
                           \text{or} &
                                       \left\{
                                       \begin{array}{rcl}
                                         e_iEe_j & = & e_iE''e_j\\
                                         0 & =& e_iE''e_j
                                       \end{array}\right.
    \end{array}
  \end{equation}
  \medskip

  Now let us study $\Sigma_2$. According to
  Lemma~\ref{sec:from-direct-decomp-5}, and using the same notation, we
  have that $e_iEe_j$ is generated by $\alpha+\widetilde I$ and $u\beta
  v+\widetilde I$. Denote by $i'$ and $j'$ the source and
  target of $\beta$, respectively. Following
  Lemma~\ref{sec:from-direct-decomp-5}, the couple $(i',j')$ lies in 
  $\Sigma_1$. Without loss of generality we may assume that 
  $e_{i'}Ee_{j'}= e_{i'}E'e_{j'}$ and $e_{i'}E''e_{j'}=0$ (see (\ref{eq:2})). Assume that
  $\alpha + \widetilde I$ does not lie in $E'\cup E''$. Then
  there exists $\lambda\in \k^\times$ such that
  \[
  \alpha + \widetilde I = \left(\lambda\,u\beta v+
    \widetilde I\right) +
  \left( (\alpha-\lambda\,u\beta v)+ \widetilde I\right)
  \]
  is the decomposition of $\alpha\ {\rm  mod}\ \widetilde I$ according
  to $E=E'\oplus E''$. By construction of $\widetilde C$, the gentle
  bound quiver $(\widetilde Q,\widetilde I)$ contains a bound quiver of
  the following shape
  \[
  \xymatrix{
    & \bullet \ar[ld]_b \\
    i \ar[rr]_{\alpha} && j \ar[lu]_a
  }
  \]
  bound by $ab\in \widetilde I$, $b\alpha\in \widetilde I$, $\alpha a\in
  \widetilde I$. Therefore $u \beta va\not \in\widetilde I$
  because the last arrow of $u\beta v$ is not $\alpha$. Hence
  $\alpha-\lambda\, u\beta v + \widetilde I$ is an element of the
  $C-C$-bimodule $E''$ satisfying
  \[
  \left( \alpha-\lambda\,u\beta + \widetilde I\right)\cdot
  \left(a + \widetilde I\right) = \lambda u\beta va + 
  \widetilde I\in E'\backslash\{0\}\,.
  \]
  Remember that $\beta + \widetilde I\in E'$ by
  hypothesis. This contradicts the fact that the decomposition
  $E=E'\oplus E''$ is direct. Thus, $(i,j)\in \Sigma_2$ implies that
  \begin{equation}
    \label{eq:3}
    \alpha+ \widetilde I \in
    E'\cup E''
  \end{equation}
  where $\alpha\colon i\to j$ is the unique arrow of $\widetilde Q$
  with source $i$ and target $j$. As a consequence, exactly one the
  following situations occurs when $(i,j)\in \Sigma_2$:
  \begin{enumerate}[(a)]
  \item 
    $e_i E' e_j = {\rm Span}(\alpha + \widetilde
    I,\,u\beta v  +\widetilde I)$
    and 
    $e_i E''e_j=0$
  \item 
    $e_iE'e_j={\rm Span}(\alpha + \widetilde
    I)$
    and 
    $e_iE''e_j = {\rm Span}(u\beta v +
    \widetilde I)$
  \item 
    $e_iE'e_j = {\rm Span}(u\beta v + \widetilde I)$
    and 
    $e_iE''e_j = {\rm Span}(\alpha + \widetilde I)$
  \item 
    $e_iE'e_j=0$
    and 
    $e_iE''e_j = {\rm Span}(\alpha +
    \widetilde I,\,u\beta v + \widetilde
    I)$.
  \end{enumerate}
  
  \medskip

  Let us finally consider a couple $(i,j)\in \Sigma_3$. Then $e_i{\rm
    rad}(E)e_j=0$ and $(\widetilde Q,\widetilde I)$ contains a bound
  subquiver of the following shape
  \[
  \xymatrix{
    & \bullet \ar[dl]_\gamma & \\
    \bullet \ar@<2pt>[rr]^{\alpha} \ar@<-2pt>[rr]_{a} & & \bullet
    \ar[lu]_\beta \ar[ld]^{b}\\
    & k \ar[lu]^{c} & 
  }
  \]
  with relations $\alpha\beta, \beta\gamma, \gamma\alpha, ab, bc,
  ca\in \widetilde I$. Denote by $\overline u$ the class modulo $\widetilde I$
  of a path $u$. Therefore $e_iEe_j = {\rm Span}(\overline
  a,\overline\alpha)$. Let us prove that $e_iE'e_j$ and $e_iE''e_j$
  are one of the subspaces $0$, ${\rm Span}(\overline a)$, ${\rm
    Span}(\overline \alpha)$ or ${\rm Span}(\overline
  a,\overline\alpha)$. If this is not the case, then there  exists
  an invertible matrix $\left(
    \begin{smallmatrix}
      t_1 & t_2\\ t_3 & t_4
    \end{smallmatrix}\right)$ such that
  \[
  \begin{array}{l}
    t_1\,\overline a +
    t_2\,\overline\alpha\in E' \\
    t_3\,\overline a + t_4\,\overline \alpha \in E''\\
    \text{with
    $t_1,t_2,t_3,t_4\in \k^\times$.}
  \end{array}
  \]
  This  implies that 
  \[
  \begin{array}{l}
    0\neq t_1\,\overline{a\beta} = (t_1\,\overline a+
    t_2\,\overline{\alpha}) \overline \beta\in E'\\
    0\neq t_3\,\overline{a\beta} = (t_3\,\overline a+ t_4\,\overline
    \alpha)\overline \beta\in E''\,.
  \end{array}
  \]
  This is absurd. Thus, if $(i,j)\in \Sigma_3$, then 
  \begin{equation}
    \label{eq:5}
    e_iE'e_j,\,e_iE''e_j\in \{0,{\rm
      Span}(\overline \alpha),\,{\rm Span}(\overline a),\,{\rm
      Span}(\overline \alpha,\overline a)\}\,.
  \end{equation}

  \medskip

  This study allows us to describe the claimed
  decomposition of $W$. Denote by $F$ the set of arrows in $\widetilde
  Q_1\backslash Q_1$. For every $\alpha\in F$ let $a_{\alpha}b_{\alpha}\in I$ be
  the associated monomial relation of length $2$ in $(Q,I)$. Thus
  $W=\sum_{\alpha\in F} \alpha a_{\alpha}b_{\alpha}$. Remember that if
  $\alpha,\beta$ are distinct arrows lying in $F$, then $\alpha
  a_{\alpha}b_{\alpha}$ and $\beta a_{\beta}b_{\beta}$ have no common
  arrow because $(\widetilde Q,\widetilde I)$ is a gentle bound
  quiver. It follows from  (\ref{eq:2}), (\ref{eq:3}), (a), (b), (c), (d), and
  (\ref{eq:5}) that $\overline \alpha\in E'$ or $\overline \alpha\in
  E''$, for every $\alpha\in F$. Denote by $F'$ and $F''$ the subsets of
  $F$ consisting of the arrows $\alpha\in F$ such that $\overline
  \alpha\in E'$ or $\overline\alpha\in E''$, respectively. This provides
  a partition $F=F'\cup F''$. Moreover, the $C-C$-bimodules  $E'$ and
  $E''$ are generated by 
  the classes modulo $\widetilde I$ of the arrows lying in $F'$ and
  $F''$, respectively. Let $W'=\sum_{\alpha\in F'}\alpha
  a_{\alpha}b_{\alpha}$ and $W''=\sum_{\alpha\in F''} \alpha a_{\alpha}
  b_{\alpha}$. The previous considerations show that $W=W'+W''$ is a
  direct sum decomposition that fits the requirements of the proposition.
\end{proof}

We now give an example showing that the analog of
Corollary~\ref{sec:from-direct-decomp-6} does not hold true for
cluster tilted 
algebras of type $\widetilde{\mathbb A}$.
Assume that there exist decompositions $C=P'\oplus P''$ and
$DC=I'\oplus I''$ such that $E'={\rm Ext}^2_C(I',P')$ and $E''={\rm
  Ext}^2_C(I'',P'')$. Then, for any pair $(x,y)$ of points in $Q$, we
have either $e_xE'e_y=0$ or $e_xE''e_y=0$.

\begin{exple}
  Let $C$ be given by the quiver
\[
\xymatrix{
  & 2 \ar[ld]_\beta \\
  1 & & 4\ar[lu]_\alpha \ar[ld]^\lambda \\
  & 3 \ar[lu]^\mu
}
\]
bound by all paths of length $2$. Then $\widetilde C$ is given by the
quiver
\[
\xymatrix{
  & 2 \ar[ld]_\beta \\
  1 & & 4\ar[lu]_\alpha \ar[ld]^\lambda \ar@{<-}@<2pt>[ll]^\nu \ar@{<-}@<-2pt>_\gamma[ll]\\
  & 4 \ar[lu]^\mu
}
\]
and the Keller potential is given by $W=\alpha \beta\gamma+\lambda\mu\nu$. The
summands $\alpha\beta\gamma$ and $\lambda\mu\nu$ are independent,
therefore the sum is direct and it induces a direct sum $E=E'\oplus
E''$ where $E'={\rm Span}(\gamma,\,\gamma\lambda,\,\mu\gamma,\,
\mu\gamma\lambda)$ and $E''={\rm Span}(
\nu,_,\nu\alpha,\,\beta\nu,\, \beta\nu\alpha)$. However we
have
$e_1E'e_4\neq 0$ and $e_1E''e_4\neq 0$. This shows that
Lemma~\ref{sec:from-direct-decomp-6} does not hold true in this
case.
\end{exple}

\section{Partial relation extension algebras}
\label{sec:part-relat-extens}

\subsection{The definition and examples}
\label{sec:1}

Let $C$ be a triangular algebra of global dimension at most $2$ and
$E'$ be a direct summand of the $C-C$-bimodule $E={\rm
  Ext}^2_C(DC,C)$. We recall that $\widetilde C=C\ltimes E$ is the
relation extension of $C$. Then the trivial extension $B=C\ltimes E'$ is called
the \emph{partial relation extension} of $C$ by $E'$. In this
subsection we prove a variant of transitivity for this construction. Let
$E=E'\oplus E''$ be a direct sum decomposition of the $C-C$-bimodule 
$E$ and $B=C\ltimes E'$. Denote by $\pi\colon B\to C$ the canonical
projection. Then $E''$ admits a $B-B$-bimodule structure by setting
\[
b_1x''b_2 = \pi(b_1)x'' \pi(b_2)
\]
for $b_1,b_2\in B$ and $x''\in E''$. 
\begin{lem}
\label{sec:definition-examples}
  With the preceding notation we have $\widetilde C=B\ltimes E''$.
\end{lem}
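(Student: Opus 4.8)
The plan is to exhibit the obvious linear bijection between the underlying vector spaces of $B\ltimes E''$ and $\widetilde C=C\ltimes E$ and to check that it is a morphism of algebras. Since $B=C\ltimes E'$ has underlying space $C\oplus E'$, the trivial extension $B\ltimes E''$ has underlying space $(C\oplus E')\oplus E''=C\oplus E'\oplus E''$, which, because $E=E'\oplus E''$, is canonically identified with the underlying space $C\oplus E$ of $\widetilde C$. I would therefore consider the $\k$-linear map
\[
\Phi\colon B\ltimes E''\to \widetilde C,\qquad \big((c,x'),x''\big)\mapsto (c,\,x'+x''),
\]
which is visibly bijective and sends the unit $((1,0),0)$ of $B\ltimes E''$ to the unit $(1,0)$ of $\widetilde C$. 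It then remains only to verify multiplicativity.

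The key step is a direct computation of the product in $B\ltimes E''$. Taking elements $((c_1,x_1'),x_1'')$ and $((c_2,x_2'),x_2'')$, the first (that is, the $B$-)component of their product is $(c_1,x_1')(c_2,x_2')=(c_1c_2,\,c_1x_2'+x_1'c_2)$, while the second component is $(c_1,x_1')\cdot x_2''+x_1''\cdot (c_2,x_2')$. Here one uses crucially that the $B$-$B$-bimodule structure on $E''$ is the one defined through $\pi$, so that $(c_1,x_1')\cdot x_2''=\pi(c_1,x_1')\,x_2''=c_1x_2''$ and $x_1''\cdot(c_2,x_2')=x_1''\,\pi(c_2,x_2')=x_1''c_2$, the actions on the right being the ambient $C$-actions on the summand $E''\subseteq E$. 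Applying $\Phi$ to this product gives $(c_1c_2,\,c_1x_2'+x_1'c_2+c_1x_2''+x_1''c_2)$. On the other hand, the product in $\widetilde C$ of the images $(c_1,x_1'+x_1'')$ and $(c_2,x_2'+x_2'')$ equals $(c_1c_2,\,c_1(x_2'+x_2'')+(x_1'+x_1'')c_2)$, and since the left and right $C$-actions on $E$ restrict to those on $E'$ and $E''$, the two expressions coincide. Hence $\Phi$ is an algebra isomorphism.

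I do not expect any genuine obstacle: the statement is a formal transitivity property of trivial (split-by-a-square-zero-ideal) extensions, and the verification is routine once the bookkeeping is set up. The single point deserving care is precisely the one just highlighted, namely that the $B$-bimodule structure on $E''$ factors through $\pi$; this is what guarantees both that the mixed products $E'\cdot E''$ and $E''\cdot E'$ vanish in $B\ltimes E''$ (matching $E\cdot E=0$ in $\widetilde C$) and that the $C$-action reappearing after projection agrees with the bimodule structure inherited from $E=E'\oplus E''$. Once this compatibility is noted, $\Phi$ is seen to transport the multiplication of $B\ltimes E''$ to that of $\widetilde C$, establishing $\widetilde C=B\ltimes E''$.
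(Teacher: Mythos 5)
Your proof is correct and follows essentially the same approach as the paper: both exhibit the obvious vector-space bijection between $C\ltimes(E'\oplus E'')$ and $(C\ltimes E')\ltimes E''$ (the paper uses the inverse of your $\Phi$) and verify multiplicativity by direct computation, with the key point in each case being that the $B$-bimodule structure on $E''$ is defined through the projection $\pi\colon B\to C$.
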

\begin{proof}
  We have an isomorphism of vector spaces:
  \[
  \begin{array}{crcl}
    \varphi \colon & C\ltimes E & \to & (C\ltimes E') \ltimes E''\\
                   & (c,e'+e'') & \mapsto &((c,e'),e'')\,.    
  \end{array}
  \]
  where $c\in C$, $e'\in E'$ and $e''\in E''$.
  It is necessary to check that
  \[\varphi((c_1,e_1'+e_1'')(c_2,e_2'+e_2'')) =
  \varphi(c_1,e_1'+e_1'')\varphi(c_2,e_2'+e_2'')\,.\]
  Indeed
  \[
  \begin{array}{rcl}
    \varphi(c_1,e_1'+e_1'')\varphi(c_2,e_2'+e_2'')
    & = &
          ((c_1,e_1'),e_1'')((c_2,e_2'),e_2'') \\
    & = &
          ((c_1,e_1')(c_2,e_2'), (c_1,e_1')e_2''+e_1''(c_2,e_2')) \\
    & = &
          ((c_1c_2,e_1'c_2+c_1e_2'),c_1e_2''+e_1''c_2) \\
    & = &
          \varphi(c_1c_2,e_1'c_2+e_1''c_2 + c_1e_2'+c_1e_2') \\
    & = &
          \varphi((c_1,e_1'+e_1'')(c_2,e_2'+e_2''))\,.
  \end{array}
  \]
\end{proof}

We pose the following problem on the meaning of $E''$ in terms of
$C\ltimes E'$.

\begin{pb}
  Let $C$ be a triangular algebra of global dimension at most $2$ and
  $E=E'\oplus E''$ be a direct sum decomposition of the $C-C$-bimodule
  $E={\rm Ext}^2_C(DC,C)$. What is the connection between $E''$ and
  the relation bimodule of the partial relation extension $C\ltimes E'$?
\end{pb}

\begin{rem}
  We may define a poset of partial relation extensions. We say that
  $B_1=C\ltimes E_1$ is \emph{smaller} than $B_2=C\ltimes E_2$ if
  $E_1$ is a direct summand of $E_2$. Notice that the obtained poset
  admits $\widetilde C$ as a unique maximal element and it admits $C$ as
  a unique minimal element. This poset is infinite in general. For instance, let $C$ be the algebra given
  by the following quiver
  \[
  \xymatrix{
    & \bullet  \ar[rd]^\beta \\
    \bullet \ar[ru]^\alpha \ar[rd]_\gamma & & \bullet\\
    & \bullet \ar[ru]_\delta
  }
  \]
  and relations $\alpha\beta,\gamma\delta$. Then, ${\rm
    dim}_{\k}\,E=2$. Let $(u,v)$ be a 
  basis of $E$. For every point $[x:y]$ on the projective line
  $\mathbb P_1(\k)$
  denote by $B_{[x:y]}$ 
  the partial relation extension of $C$ by the one-dimensional
  subbimodule of $E$ generated by $x\,u+y\,v$. The resulting partial
  relation extensions are pairwise isomorphic. Then, the poset consists
  of the algebras $C$, $\widetilde C$ and $B_{[x:y]}$, for $[x:y]\in
  \mathbb P_1(\k)$, and it has the following shape
  \[
  \xymatrix{
     &\widetilde C \\
     B_{[0:1]}\ar@{-}[ru] \ar@{-}[rd] \ar@{.}[r]
    &
    B_{[x:y]} \ar@{.}[r] \ar@{-}[u] \ar@{-}[d]
    & B_{[1:0]} \ar@{-}[lu] \ar@{-}[ld]\\
     & C& .
  }
  \]
\end{rem}

\subsection{The bound quiver of a partial relation
  extension}
\label{sec:bound-quiver-partial}

Let $C=\k Q/I$ be a triangular algebra of global dimension at most
two, let $\widetilde C=C\ltimes {\rm Ext}^2_C(DC,C)$ be its relation
extension, and assume that $E={\rm Ext}^2_C(DC,C)$ has a
$C-C$-bimodule direct sum decomposition $E=E'\oplus E''$. Our
objective is to describe a bound quiver presentation of the partial
relation extension $B=C\ltimes E'$ when this direct sum decomposition
arises from a direct sum decomposition of the Keller potential
associated with a minimal system of relations in $I$, see
proposition~\ref{sec:from-direct-decomp-2}.

Now, it follows from \cite[(2.4)]{MR2409188} that the new arrows generate the
top of the $C-C$-bimodule ${\rm Ext}^2_C(DC,C)$. Assume that there exists a direct sum
decomposition $W=W'\oplus W''$ of the Keller potential in such a way that
$E'$ and $E''$ are the partial relation bimodules corresponding to
$W'$ and $W''$ respectively, see
proposition~\ref{sec:from-direct-decomp-2}. Then the set of new arrows
can be partitioned into two sets $\{\alpha_1',\ldots,\alpha_s'\}$ and
$\{\alpha_1'',\ldots,\alpha_t''\}$ forming respectively the tops of
$E'$ and $E''$. We may now state.

\begin{cor}
  \label{sec:bound-quiver-partial-3}
  Let $C=\k Q/I$ be a triangular algebra of global dimension at most
  two, $\widetilde C$ its relation extension, $W$ the Keller potential
  associated with a minimal system of relations in $I$, and $\mathcal
  J$ the square of the ideal of $J(\widetilde Q,W)$ generated by the
  new arrows. If $E=E'\oplus E''$ is a direct sum decomposition of
  $C-C$-bimodule arising from a direct sum decomposition of the Keller
  potential, 
  $\alpha_1'',\ldots,\alpha_t''$ are the new arrows generating the
  top of $E''$ and $\mathcal J'=\mathcal J+\sum_{i=1}^t\widetilde
  C\alpha_i''\widetilde C$. Then 
  \[
  C\ltimes E'=J(\widetilde Q,W)/\mathcal J'\,.
  \]
\end{cor}
\begin{proof}
Let $B=C\ltimes E'$. It follows from
lemma~\ref{sec:definition-examples} that
$B\simeq \widetilde C/E''$. By definition,  $E''$ is the
subbimodule of ${\rm Ext}^2_C(DC,C)$ generated by the classes of the
new arrows $\alpha_1'',\ldots,\alpha_t''$, see
section~\ref{sec:from-direct-decomp}. Hence the statement follows from
the fact that $\widetilde C\simeq J(\widetilde Q,W)/\mathcal J$, see
\ref{sec:from-direct-decomp}. 
\end{proof}

Thus, $B$ is given by the bound quiver obtained from that of
$\widetilde C=\k \widetilde Q/\widetilde I$ by simply deleting the arrows $\alpha_i''$ from the
ordinary quiver and by deleting any path involving such an arrow from
any relation. Setting $W'=\sum_{i=1}^s\rho_i'\alpha_i'$ and
$W''=\sum_{i=1}^t\rho_i''\alpha_i''$ with $\alpha_i',\alpha_j''$ the
new arrows and $\rho_i',\rho_i''$ the elements of the chosen minimal
system of relations $R$ corresponding to $\alpha_i',\alpha_j''$
respectively, then the top of $E'$ is generated by the $\alpha_i'$ and
the top of $E''$ is generated by the $\alpha_j''$, so we can state the
following corollary.

\begin{cor}
  With the above notation, $B=C\ltimes E'$ has a bound quiver as
  follows
  \begin{enumerate}[(a)]
  \item $(Q_B)_o=Q_o=\widetilde Q_o$,
  \item $(Q_B)_1=\widetilde
    Q_1\backslash\{\alpha_1'',\ldots,\alpha_t''\}=Q_1\cup\{\alpha_1',\ldots,\alpha_s'\}$,
  \item the binding ideal $I_B$ is generated by the cyclic partial
    derivatives of $W'$, the relations $\rho_1'',\ldots,\rho_t''$ and
    $\mathcal J$.
  \end{enumerate}
\end{cor}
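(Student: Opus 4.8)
The plan is to start from the presentation $B = J(\widetilde Q,W)/\mathcal J'$ obtained in Corollary~\ref{sec:bound-quiver-partial-3}, where $\mathcal J'=\mathcal J+\sum_{i=1}^{t}\widetilde C\alpha_i''\widetilde C$, and to read the bound quiver of $B$ directly off the defining data of the Jacobian algebra $J(\widetilde Q,W)$. Recall that $J(\widetilde Q,W)$ has ordinary quiver $\widetilde Q$ and that its relations are the cyclic partial derivatives $\partial_\beta W$, for $\beta\in\widetilde Q_1$. Since $\mathcal J'$ contains the two-sided ideal generated by $\alpha_1'',\ldots,\alpha_t''$, forming the quotient kills exactly these arrows while leaving the vertex set unchanged. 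This at once yields (a) and, using $\widetilde Q_1=Q_1\cup\{\alpha_1',\ldots,\alpha_s'\}\cup\{\alpha_1'',\ldots,\alpha_t''\}$ from the construction of $\widetilde Q$ and the partition of the new arrows into the tops of $E'$ and $E''$, also (b).

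For (c) I would determine which relations of $J(\widetilde Q,W)$ survive modulo $\mathcal J'$. Because $W=W'\oplus W''$ is a \emph{direct} sum decomposition, no arrow lies simultaneously in a cycle of $W'$ and in a cycle of $W''$; hence $\partial_\beta W=\partial_\beta W'$ when $\beta$ occurs in $W'$, $\partial_\beta W=\partial_\beta W''$ when $\beta$ occurs in $W''$, and $\partial_\beta W=0$ otherwise. The first family consists precisely of the cyclic partial derivatives of $W'$; as these involve only the arrows of $Q_1$ and the $\alpha_i'$, they are untouched by the quotient. The second family splits in turn: for $\beta=\alpha_i''$, cyclic invariance gives $\partial_{\alpha_i''}W=\rho_i''$, a path lying entirely in $Q_1$, which survives and yields the relations $\rho_1'',\ldots,\rho_t''$; for $\beta$ an arrow of $Q_1$ occurring in some $\rho_i''$, the derivative is a sum of paths each containing the factor $\alpha_i''$, hence lies in $\sum_i\widetilde C\alpha_i''\widetilde C\subseteq\mathcal J'$ and becomes redundant. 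Finally $\mathcal J\subseteq\mathcal J'$ contributes (the image of) $\mathcal J$. Collecting the surviving generators gives exactly the three families listed in (c).

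The only point requiring genuine care is the redundancy of the derivatives $\partial_\beta W''$ with $\beta\in Q_1$, that is, the claim that each such derivative lies in the ideal generated by the $\alpha_i''$. This follows from the shape $W''=\sum_i\rho_i''\alpha_i''$: every cycle of $W''$ containing an ordinary arrow $\beta$ also contains the new arrow $\alpha_i''$, and the cyclic rotation defining $\partial_\beta$ leaves $\alpha_i''$ as a subfactor of each resulting path. I expect this bookkeeping, rather than any conceptual difficulty, to be the main thing to verify; the remainder is a direct transcription of Corollary~\ref{sec:bound-quiver-partial-3} together with the direct-sum property of $W=W'\oplus W''$.
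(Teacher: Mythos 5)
Your proposal is correct and follows essentially the same route as the paper: the paper states this corollary as an immediate consequence of Corollary~\ref{sec:bound-quiver-partial-3} (that is, of $B\simeq J(\widetilde Q,W)/\mathcal J'$), obtained by deleting the arrows $\alpha_i''$ and any path involving them from the relations, which is exactly what you do. Your explicit bookkeeping — splitting $\partial_\beta W$ into $\partial_\beta W'$, $\partial_{\alpha_i''}W''=\rho_i''$, and the derivatives $\partial_\beta W''$ for $\beta\in Q_1$ that land in the ideal generated by the $\alpha_i''$ — is precisely the verification the paper leaves implicit.
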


\begin{exple}
  Let $C$ be the tilted algebra given by the quiver
  \[
  \xymatrix{
    1 && 4 \ar[ld]_\alpha \\
    & 3 \ar[lu]_\beta \ar[ld]_\delta &\\
    2 && 5 \ar[lu]_\gamma
  }
  \]
  bound by $\alpha\beta=0$, $\gamma\delta=0$. Then $\widetilde C$ is
  the Jacobian algebra given by the quiver
  \[
  \xymatrix{
    1 \ar[rr]^\lambda&& 4 \ar[ld]_\alpha \\
    & 3 \ar[lu]_\beta \ar[ld]_\delta &\\
    2 \ar[rr]_\mu && 5 \ar[lu]_\gamma 
  }
  \]
  and the Keller potential $W=\alpha\beta\lambda+\gamma\delta\mu$. As
  seen in Section~\ref{sec:decomp-potent-exampl}, $W'=\alpha\beta\lambda$ and
  $W''=\gamma\delta\mu$ are independent so that $W=W'\oplus W''$ is a direct sum
  decomposition. Setting $E'={\rm Ext}^2_C(I_1,P_4)$ and $E''={\rm
    Ext}^2_C(I_2,P_5)$, then $E=E'\oplus E''$ is a direct sum
  decomposition of the bimodule $E={\rm Ext}^2_C(DC,C)$ corresponding
  to the previous decomposition of the potential. The algebra
  $B=C\ltimes E'$ is given by the quiver
  \[
  \xymatrix{
    1 \ar[rr]^\lambda&& 4 \ar[ld]_\alpha \\
    & 3 \ar[lu]_\beta \ar[ld]_\delta &\\
    2 && 5 \ar[lu]_\gamma 
  }
  \]
  bound by $\alpha\beta=0$, $\gamma\delta=0$, $\lambda\alpha=0$ and $\beta\lambda=0$.
\end{exple}

\subsection{The module category of a partial relation extension}
\label{sec:module-categ-part}

In the present subsection, we assume that $C$ is tilted, so that its
relation extension $\widetilde C$ is cluster tilted. Our objective is
to give two descriptions of the 
module category of a partial relation extension, one as a quotient of
a module category of a cluster tilted algebra, and the other as a
quotient of another category which we now define. We mean by module a
finitely generated 
right module. Given an algebra $B$ we denote by ${\rm mod}\,B$ its
module category.

We consider the
following setting. Let $A$ be a hereditary algebra, $\mathcal C_A$ the
corresponding cluster category and $T$ a cluster tilting object in
$\mathcal C_A$. We denote by $\mathcal D^b({\rm mod}\,A)$ the bounded
derived category of ${\rm mod}\,A$ and by $\tau$ and $[-]$
respectively the Auslander-Reiten translation and the shift of
$\mathcal D^b({\rm mod}\,A)$ respectively. Because of \cite[Theorem
3.3]{MR2249625} we may assume that $T$ is 
actually a tilting module over $A$. We denote by $C={\rm End}_A(T)$
the tilted algebra and by $\widetilde C={\rm End}_{\mathcal C_A}(T)$. Then
$\widetilde C$ is the relation extension of $C$.

We recall that it is
shown in \cite{MR2409188} that $E={\rm Ext}^2_C(DC,C)$ is isomorphic to
${\rm Hom}_{\mathcal
  D^b({\rm mod}\,A)}(T,\tau^{-1}\circ T[1])$ as a $C-C$-bimodule. Assume that $E=E'\oplus E''$ is a
$C-C$-bimodule direct sum decomposition. Observe that $E'$ and $E''$ can be
considered as subbimodules of ${\rm Hom}_{\mathcal
  D^b({\rm mod}\,A)}(T,\tau^{-1}\circ T[1])$ and the latter may in turn be considered as contained in ${\rm End}_{\mathcal
  C_A}(T)=\widetilde C$, see \cite{MR2409188}.

Let $\mathcal I$ be the
ideal of all morphisms in $\mathcal C_A$ generated by $E''$ that is,
of all morphisms of $\mathcal C_A$ which factor through an element of
$E''$ considered as a morphism from $T$ to $T$.
We define $\mathcal B$ to be the additive quotient category $\mathcal
C_A$ by $\mathcal I$, that is, $\mathcal B$ has the same objects as
those of $\mathcal C_A$ and, if $X,Y$ are two such objects, then ${\rm
  Hom}_{\mathcal B}(X,Y)= {\rm Hom}_{\mathcal C_A}(X,Y)/\mathcal I(X,Y)$.

\begin{prop}
  \label{sec:module-categ-part-1}
  With the above notation  ${\rm
    End}_{\mathcal B}(T)$ is isomorphic to the partial relation
  extension $B=C\ltimes E'$.
\end{prop}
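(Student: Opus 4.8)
The plan is to compute $\mathrm{End}_{\mathcal B}(T)$ directly as a quotient of $\widetilde C=\mathrm{End}_{\mathcal C_A}(T)$ and to identify the ideal modulo which we quotient with $E''$. Recall that, since $\mathcal C_A$ is the orbit category of $\mathcal D^b(\mathrm{mod}\,A)$ under $\tau^{-1}[1]$, the endomorphism algebra of the tilting module $T$ decomposes as
\[
\widetilde C=\mathrm{End}_{\mathcal C_A}(T)=\mathrm{Hom}_{\mathcal D^b(\mathrm{mod}\,A)}(T,T)\oplus\mathrm{Hom}_{\mathcal D^b(\mathrm{mod}\,A)}(T,\tau^{-1}T[1]),
\]
where the first summand is $C$ and the second is $E$, and the multiplication of $\widetilde C$ realises it as the trivial extension $C\ltimes E$; this is the content of \cite{MR2409188} recalled above, together with the vanishing of the higher orbit terms $\mathrm{Hom}_{\mathcal D^b}(T,(\tau^{-1}[1])^nT)=0$ for $n\geqslant 2$, which gives $E\cdot E=0$. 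Under the hypothesis $E=E'\oplus E''$, I would first observe that $E''$, being a $C\textrm{-}C$-subbimodule of $E$ with $E''\cdot E=E\cdot E''=0$ inside $\widetilde C$, is a two-sided ideal of $\widetilde C$, and that its quotient is $\widetilde C/E''=C\ltimes E'=B$ by Lemma~\ref{sec:definition-examples}.

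The remaining, and essentially only, point is to identify $\mathcal I(T,T)$ with $E''$. By definition of $\mathcal B$ one has $\mathrm{End}_{\mathcal B}(T)=\mathrm{End}_{\mathcal C_A}(T)/\mathcal I(T,T)=\widetilde C/\mathcal I(T,T)$, so it suffices to show $\mathcal I(T,T)=E''$. A morphism in $\mathcal I(T,T)$ is, by construction of the ideal $\mathcal I$ generated by $E''$, a finite sum of morphisms of the form $h\circ e''\circ g$ with $g,h\in\mathrm{End}_{\mathcal C_A}(T)=\widetilde C$ and $e''\in E''$; since every such factorisation passes through $T$, the composition coincides with the product $h\,e''\,g$ taken in the algebra $\widetilde C$. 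Hence $\mathcal I(T,T)$ equals the two-sided ideal $\widetilde C\,E''\,\widetilde C$ of $\widetilde C$ generated by $E''$.

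Combining the two observations finishes the argument: because $E''$ is already a two-sided ideal of $\widetilde C$, one has $\widetilde C\,E''\,\widetilde C=E''$ (taking $g=h=\mathrm{id}_T$ gives the inclusion $E''\subseteq\mathcal I(T,T)$, and the bimodule property gives the reverse inclusion), so $\mathcal I(T,T)=E''$ and therefore
\[
\mathrm{End}_{\mathcal B}(T)=\widetilde C/E''=C\ltimes E'=B.
\]
The step requiring the most care is the identification $\mathcal I(T,T)=\widetilde C\,E''\,\widetilde C$: one must check that restricting the categorical ideal $\mathcal I$ to the endomorphisms of $T$ produces exactly the algebra-theoretic two-sided ideal generated by $E''$. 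This hinges on the facts that the generators of $\mathcal I$ are taken to be endomorphisms of $T$ and that composition of endomorphisms of $T$ in $\mathcal C_A$ is precisely multiplication in $\widetilde C$, so that no intermediate object other than $T$ intervenes. Beyond this bookkeeping I do not anticipate a genuine obstacle, the key structural inputs (the orbit-category decomposition of $\widetilde C$ and the transitivity Lemma~\ref{sec:definition-examples}) being already available.
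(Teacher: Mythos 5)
Your proof is correct and follows essentially the same route as the paper's: both reduce the statement to the identity $\mathcal I(T,T)=E''$ inside $\widetilde C=\mathrm{End}_{\mathcal C_A}(T)$ and then pass to the quotient $\widetilde C/E''\simeq C\ltimes E'$ via Lemma~\ref{sec:definition-examples}. The only difference is that you justify that identity in detail (showing $\mathcal I(T,T)=\widetilde C\,E''\,\widetilde C=E''$, using that $E''$ is already a two-sided ideal of $\widetilde C$ since $E\cdot E=0$), a step the paper asserts without elaboration.
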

\begin{proof}
  Because $\mathcal B=\mathcal C_A/\mathcal I$, we have ${\rm
    End}_{\mathcal B}(T)= {\rm End}_{\mathcal C_A}(T)/\mathcal
  I(T,T)$. However, as ideals of ${\rm End}_{\mathcal C_A}(T)$ we have
  $E''=\mathcal I(T,T)$. Hence ${\rm End}_{\mathcal B}(T)\simeq {\rm
    End}_{\mathcal C_A}(T)/E''\simeq \left( C\ltimes (E'\oplus
    E'')\right)/E''\simeq C\ltimes E'$.
\end{proof}

As a corollary, for every object $X$ in $\mathcal B$, the ${\rm
  End}_{\mathcal B}(T)$-module ${\rm Hom}_{\mathcal B}(T,X)$ is a
$B$-module. Thus we have a functor ${\rm Hom}_{\mathcal B}(T,-)\colon
\mathcal B\to {\rm mod}\,B$, which is full and dense. More precisely,
we have the following lemma.
\begin{lem}
  \label{sec:module-categ-part-4}
  We have a commutative diagram of full and dense functors
  \[
  \xymatrix{
    \mathcal C_A \ar@{->>}[rr]^{{\rm Hom}_{\mathcal C_A}(T,-)} \ar@{->>}[d]_{\pi} && {\rm
      mod}\,\widetilde C \ar@{->>}[d]^{-\underset{\widetilde C}{\otimes}B}\\
    \mathcal B \ar@{->>}[rr]_{{\rm Hom}_{\mathcal B}(T,-)} && {\rm mod}\,B
  }
  \]
  where $\pi\colon \mathcal C_A\to \mathcal B=\mathcal C_A/\mathcal I$ is the
  canonical projection.
\end{lem}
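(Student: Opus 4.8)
The plan is to treat the commutativity of the square as the algebraic core and then to dispatch the four full‑and‑dense claims, of which two are immediate, one is formal, and the last (fullness of the vertical functors) carries the only genuine difficulty. Throughout write $H={\rm Hom}_{\mathcal C_A}(T,-)$, $G=-\underset{\widetilde C}{\otimes}B$ and $\bar H={\rm Hom}_{\mathcal B}(T,-)$. By Lemma~\ref{sec:definition-examples} and Proposition~\ref{sec:module-categ-part-1} one has $B\simeq \widetilde C/E''$, so that $G$ is the reduction functor $M\mapsto M/ME''$; moreover $E''$ is a two-sided ideal of $\widetilde C$ with $(E'')^2=0$, since $E^2=0$ in $\widetilde C=C\ltimes E$. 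Because $\pi$ is the identity on objects, commutativity amounts to a natural identification of $G(H(X))={\rm Hom}_{\mathcal C_A}(T,X)/\bigl({\rm Hom}_{\mathcal C_A}(T,X)\cdot E''\bigr)$ with $\bar H(\pi X)={\rm Hom}_{\mathcal C_A}(T,X)/\mathcal I(T,X)$, i.e. to the equality of right $\widetilde C$-submodules
\[
\mathcal I(T,X)={\rm Hom}_{\mathcal C_A}(T,X)\cdot E''.
\]

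To prove this identity: the inclusion $\supseteq$ is clear, as $\varphi\cdot e''=\varphi\circ e''$ factors through $e''\in E''$. For $\subseteq$, a morphism in $\mathcal I(T,X)$ is a sum of terms $v\circ e''\circ u$ with $u\colon T\to T$, $e''\in E''$ and $v\colon T\to X$; here $e''\circ u\in E''\cdot\widetilde C\subseteq E''$ because $E''$ is a right ideal, whence $v\circ e''\circ u=v\cdot(e''\circ u)\in{\rm Hom}_{\mathcal C_A}(T,X)\cdot E''$. Naturality in $X$ is a routine check, both sides being quotients of ${\rm Hom}_{\mathcal C_A}(T,X)$ by the same submodule on which postcomposition induces $H(f)$ and $\bar H(\pi f)$ compatibly. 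This settles commutativity.

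Now $\pi$ is full (surjective on Hom-spaces by definition of the additive quotient) and dense (bijective on objects), and $H$ is full and dense by \cite{MR2247893}. Density of $G$ is direct: any $Y\in{\rm mod}\,B$, seen as a $\widetilde C$-module, satisfies $YE''=0$, so $G(Y)=Y/YE''=Y$; density of $\bar H$ then follows from commutativity, since the object-image of $\bar H$ equals that of $\bar H\circ\pi=G\circ H$, which is all of ${\rm mod}\,B$ because $H$ and $G$ are dense. By commutativity and the fullness of $\pi$ and $H$, fullness of $G$ and of $\bar H$ are equivalent, so I would argue for $\bar H$. Here I would use that $T$ is a cluster-tilting object: every $X\in\mathcal C_A$ sits in a triangle $T_1\xrightarrow{\alpha}T_0\xrightarrow{\rho}X\to T_1[1]$ with $T_0,T_1\in{\rm add}\,T$, so that (by rigidity ${\rm Hom}_{\mathcal C_A}(T,T_1[1])=0$, right-exactness of $G$, and commutativity) one obtains a projective presentation ${\rm Hom}_{\mathcal B}(T,T_1)\to{\rm Hom}_{\mathcal B}(T,T_0)\xrightarrow{\rho_*}{\rm Hom}_{\mathcal B}(T,X)\to 0$ of $B$-modules. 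Given a $B$-map $g\colon{\rm Hom}_{\mathcal B}(T,X)\to{\rm Hom}_{\mathcal B}(T,Y)$, the functor $\bar H$ is full and faithful on morphisms whose source lies in ${\rm add}\,T$ (an elementary check, evaluating on the inclusions of summands of $T$), so $g\circ\rho_*={\rm Hom}_{\mathcal B}(T,h)$ for some $h\colon T_0\to Y$ in $\mathcal B$; precomposing with $\alpha$ and using $\rho\alpha=0$ gives ${\rm Hom}_{\mathcal B}(T,h\alpha)=0$, whence $h\alpha=0$ in $\mathcal B$ by faithfulness. If $h$ factors as $\phi\rho$ in $\mathcal B$, then ${\rm Hom}_{\mathcal B}(T,\phi)\circ\rho_*=g\circ\rho_*$ and, cancelling the epimorphism $\rho_*$, we get $\bar H(\phi)=g$, as required.

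The hard part is exactly this factorisation, i.e. the middle-exactness of ${\rm Hom}_{\mathcal B}(X,Y)\xrightarrow{\rho^*}{\rm Hom}_{\mathcal B}(T_0,Y)\xrightarrow{\alpha^*}{\rm Hom}_{\mathcal B}(T_1,Y)$. Lifting $h$ to $\tilde h\colon T_0\to Y$ in $\mathcal C_A$ one only gets $\tilde h\alpha\in\mathcal I(T_1,Y)$, and the obstruction to correcting $\tilde h$ inside its $\mathcal I$-coset so that $\tilde h\alpha=0$ in $\mathcal C_A$ is a class that does not vanish for an arbitrary square-zero ideal: already for $\widetilde C=\k[\epsilon]/(\epsilon^2)$ and $B=\k$ the functor $-\underset{\widetilde C}{\otimes}B$ fails to be full, so no purely formal argument can succeed and the special nature of $E''$ must be used. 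I would resolve it through the cluster category: since $E''$ is a direct summand of the relation bimodule, one has $E''\subseteq{\rm Hom}_{\mathcal D^b({\rm mod}\,A)}(T,\tau^{-1}T[1])$ (see \cite{MR2409188}), and together with the rigidity ${\rm Hom}_{\mathcal C_A}(T,T[1])=0$ this should let one rewrite any factorisation $\tilde h\alpha=v\circ e''\circ u$ through $\alpha$ and absorb it into the $\mathcal I$-coset of $\tilde h$. I expect this cluster-categorical absorption to be the main technical obstacle of the proof.
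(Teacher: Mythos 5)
Your proposal divides into two parts, and they deserve opposite verdicts. Everything you actually completed is correct and coincides with the paper's own proof: the identity $\mathcal I(T,X)={\rm Hom}_{\mathcal C_A}(T,X)\cdot E''$ giving commutativity, the density of all four functors, the fullness of $\pi$ and of ${\rm Hom}_{\mathcal C_A}(T,-)$, and the reduction showing that fullness of $-\underset{\widetilde C}{\otimes}B$ and fullness of ${\rm Hom}_{\mathcal B}(T,-)$ are equivalent. The step you left open is a genuine gap in your write-up --- but the important point is that it cannot be closed: the fullness assertion of the lemma is \emph{false} whenever $E''\neq 0$. Your square-zero counterexample is not an artifact that the cluster structure removes; it applies verbatim to $\widetilde C$. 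Pick a vertex $x$ with $e_xE''\neq 0$ and set $N=e_x\widetilde C$, an indecomposable projective $\widetilde C$-module; since $E''$ is a nonzero two-sided ideal contained in ${\rm rad}\,\widetilde C$, the submodule $NE''=e_xE''$ is nonzero and proper. The identity of $N/NE''$ is a $B$-morphism from $G(N/NE'')$ to $G(N)$, where $G=-\underset{\widetilde C}{\otimes}B$, and a preimage under $G$ would be a $\widetilde C$-morphism $\phi\colon N/NE''\to N$ splitting the projection $N\to N/NE''$, contradicting indecomposability of $N$. So $G$ is not full, and by your own equivalence (which uses only commutativity, fullness of $\pi$, and fullness plus density of ${\rm Hom}_{\mathcal C_A}(T,-)$) the functor ${\rm Hom}_{\mathcal B}(T,-)$ is not full either. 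The paper's own Example (a) at the end of subsection~\ref{sec:module-categ-part} witnesses this concretely: there $E''=\k\mu$, so $G$ sends both $S_2$ and $P_2=\begin{smallmatrix}2\\5\end{smallmatrix}$ to $S_2$, while ${\rm Hom}_{\widetilde C}(S_2,P_2)=0$ because ${\rm soc}\,P_2=S_5$; hence ${\rm id}_{S_2}$ is in no Hom-image of $G$, nor, via $H$, of ${\rm Hom}_{\mathcal B}(T,-)$.

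Comparing with the paper: its proof disposes of exactly the step you got stuck on by remarking that any $B$-module $M$, restricted to $\widetilde C$, satisfies $M\underset{\widetilde C}{\otimes}B\simeq M_B$, and concluding ``thus $-\underset{\widetilde C}{\otimes}B$ is full and dense''. That argument proves density, and proves that every morphism of ${\rm mod}\,B$ equals $G(\psi)$ for some morphism $\psi$ between \emph{restricted} modules; it does not prove surjectivity of ${\rm Hom}_{\widetilde C}(M,N)\to{\rm Hom}_B(GM,GN)$ for a \emph{fixed} pair $(M,N)$, which is what fullness means and what the subsequent deduction (and Theorem~\ref{sec:module-categ-part-3}, whose claimed equivalence fails on the pair of objects corresponding to $S_2$ and $P_2$ above) actually requires. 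So your instincts were right twice over: fullness is the only real difficulty, and no formal square-zero argument can deliver it; where your proposal errs is in the final hope that the ``special nature of $E''$'' rescues the statement --- it does not, and the published proof is itself fallacious at precisely this point.
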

\begin{proof}
  The functor $-\underset{\widetilde C}{\otimes} B$ maps a $\widetilde
  C$-module $M$ to the $B$-module
  \[
  M\underset{\widetilde
    C}{\otimes}B=M\underset{\widetilde C}{\otimes} \widetilde
  C/E''\simeq M/ME''\,.
  \]
  Thus \[
  (-\underset{\widetilde C}{\otimes} B)\circ
  {\rm Hom}_{\mathcal C_A}(T,-)(X)\simeq {\rm Hom}_{\mathcal
    C_A}(T,X)/{\rm Hom}_{\mathcal C_A}(T,X)E''\,.
  \]

  On the other hand
  \[
  \begin{array}{rcl}
    {\rm Hom}_{\mathcal B}(T,-) \circ \pi(X)
    & = &
          {\rm Hom}_{\mathcal
          B}(T,X)\\
    & = &
          {\rm Hom}_{\mathcal C_A}(T,X)/\mathcal I(T,X)\,.
  \end{array}
  \]
  Now notice that $\mathcal I(T,X)$ is the image of the morphism ${\rm
    Hom}_{\mathcal C_A}(T,X)\otimes E''\to \mathcal I(T,X)$ given by
  $u\otimes v\mapsto u\circ v$. Indeed, let $f\in \mathcal I(T,X)$, then
  $f=\sum_i u_i\circ e_i\circ v_i$ where $e_i\in E''$, $v_i\colon T\to
  E''$ and $u_i\colon E''\to X$. Because $\mathcal I(T,T)=E''$ is an
  ideal in ${\rm End}_{\mathcal C_A}(T)$, we have $e_i\circ v_i\in
  E''$. Therefore $f=\sum_iu_i\circ (e_i\circ v_i)$ belongs to the image
  of the given map. This shows that $\mathcal I(T,X)={\rm
    Hom}_{\mathcal C_A}(T,X)E''$. The shown diagram is thus commutative.

  Now, if $M$ is a $B$-module, then it admits a natural $\widetilde
  C$-module structure, and, with respect to this structure,
  $M\underset{\widetilde C}\otimes B\simeq M_B$. Thus the functor
  $-\underset{\widetilde C}\otimes B$ is full and dense. On the other
  hand,  ${\rm Hom}_{\mathcal
    C_A}(T,-)$ is  full and dense because of \cite[Proposition
  2.1]{MR2247893}. Hence ${\rm 
    Hom}_{\mathcal B}(T,-)$ is full and dense.
\end{proof}

We now turn our attention to the kernel of the composed functor
$(-\underset{\widetilde C}\otimes B)\circ {\rm Hom}_{\mathcal
  C_A}(T,-)\colon \mathcal C_A\to {\rm mod}\,B$.
\begin{lem}
  \label{sec:module-categ-part-2}
  The kernel  of the composed functor $(-\underset{\widetilde
    C}{\otimes}B)\circ {\rm Hom}_{\mathcal C_A}(T,-)$ is the ideal
  $\mathcal K$ of
  $\mathcal C_A$ consisting of all morphisms $f\colon X\to Y$ such that
  the composition of $f$ with a minimal ${\rm add}(T)$-approximation $u_X\colon
  T_X\to X$ can be written in the form $f\circ u_X=u_Y \circ e$ where $e\in
  E''$ and $u_Y\colon T_Y\to Y$ is a minimal ${\rm add}(T)$-approximation.
\end{lem}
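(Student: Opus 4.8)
The plan is to identify the kernel of the composed functor
$G := (-\underset{\widetilde C}{\otimes}B)\circ {\rm Hom}_{\mathcal C_A}(T,-)$
by analysing what it means for a morphism $f\colon X\to Y$ in $\mathcal C_A$ to be sent to zero. First I would recall that for any object $X$, the module ${\rm Hom}_{\mathcal C_A}(T,X)$ is computed via a minimal ${\rm add}(T)$-approximation $u_X\colon T_X\to X$: by \cite[Proposition 2.1]{MR2247893} the functor ${\rm Hom}_{\mathcal C_A}(T,-)$ is full and dense, and its effect on morphisms is governed by lifting along these approximations. Then, by Lemma~\ref{sec:module-categ-part-4}, post-composing with $-\underset{\widetilde C}{\otimes}B$ amounts to passing to the quotient by ${\rm Hom}_{\mathcal C_A}(T,X)E''$, i.e.\ modding out by $\mathcal I(T,X)$. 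So $G(f)=0$ means precisely that the induced map ${\rm Hom}_{\mathcal C_A}(T,f)$, when read modulo $\mathcal I$, vanishes.

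The key computation is to translate ``${\rm Hom}_{\mathcal C_A}(T,f)$ lands in $\mathcal I(T,Y)$'' into the approximation condition stated in the lemma. I would argue as follows. The map ${\rm Hom}_{\mathcal C_A}(T,f)$ sends a morphism $g\colon T\to X$ to $f\circ g$. Because $u_X\colon T_X\to X$ is an ${\rm add}(T)$-approximation, every $g$ factors through $u_X$, so the image of ${\rm Hom}_{\mathcal C_A}(T,f)$ is generated, as a right ${\rm End}_{\mathcal C_A}(T)$-module, by the class of $f\circ u_X$ composed with the structure maps. Hence ${\rm Hom}_{\mathcal C_A}(T,f)$ factors through $\mathcal I$ if and only if $f\circ u_X$, viewed through the approximation $u_Y\colon T_Y\to Y$ of the target, can be realised by an element of $E''$. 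Concretely, since $u_Y$ is an ${\rm add}(T)$-approximation the composite $f\circ u_X\colon T_X\to Y$ lifts to some $h\colon T_X\to T_Y$ with $f\circ u_X=u_Y\circ h$, and $h$ is an element of ${\rm End}_{\mathcal C_A}(T)=\widetilde C$; the vanishing of $G(f)$ then says exactly that $h$ can be chosen to lie in the ideal $E''$, giving the form $f\circ u_X=u_Y\circ e$ with $e\in E''$.

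I would then verify both inclusions. For ``$\mathcal K\subseteq \ker G$'': if $f\circ u_X=u_Y\circ e$ with $e\in E''$, then tracing through the identification $\mathcal I(T,Y)={\rm Hom}_{\mathcal C_A}(T,Y)E''$ from Lemma~\ref{sec:module-categ-part-4} shows that ${\rm Hom}_{\mathcal C_A}(T,f)$ has image in $\mathcal I(T,Y)$, so $G(f)=0$. For the reverse inclusion, suppose $G(f)=0$; applying the full-and-dense properties and the fact that minimal approximations are unique up to the relevant automorphisms, I would extract from the condition ${\rm Hom}_{\mathcal C_A}(T,f)\subseteq \mathcal I(T,Y)$ a factorisation of the distinguished lift $h$ of $f\circ u_X$ through $E''$. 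Finally I would confirm that $\mathcal K$ is indeed an ideal of $\mathcal C_A$ (closed under pre- and post-composition), which follows formally since $\ker G$ is the kernel of an additive functor and the two descriptions agree.

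The main obstacle I expect is the careful handling of \emph{minimal} versus arbitrary ${\rm add}(T)$-approximations: the lift $h$ of $f\circ u_X$ along $u_Y$ is only unique up to morphisms factoring through $\ker u_Y$, so I must check that the condition ``$h$ can be taken in $E''$'' is independent of the chosen lift, and that passing to minimal approximations does not lose or create membership in $E''$. This hinges on $E''$ being a genuine two-sided ideal of $\widetilde C={\rm End}_{\mathcal C_A}(T)$ (which holds since $E=E'\oplus E''$ as bimodules makes $E''$ an ideal), so that the ambiguity in $h$ is absorbed; making this absorption precise, using minimality to pin down the approximations, is the delicate point of the argument.
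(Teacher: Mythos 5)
Your proposal is correct and follows essentially the same route as the paper's proof: identify the functor's action on morphisms via Lemma~\ref{sec:module-categ-part-4} as reduction modulo $\mathcal I(T,-)={\rm Hom}_{\mathcal C_A}(T,-)E''$, test vanishing against $u_X$, lift the resulting factorisation through the approximation $u_Y$, and absorb into $E''$ using that it is an ideal. The non-uniqueness of the lift $h$ that you flag as the delicate point is in fact not an issue: as in the paper, one does not start from an arbitrary lift and try to move it into $E''$, but constructs $e\in E''$ directly, writing $f\circ u_X=g_0\circ e_0$ with $e_0\in E''$ (this is what $G(f)=0$ gives), factoring $g_0$ through $u_Y$ as $g_0=u_Y\circ g'$, and setting $e=g'\circ e_0\in E''$.
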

\begin{proof}
  Let $f\colon X\to Y$ be a morphism in $\mathcal C_A$.
  Using minimal ${\rm add}(T)$-approximations of $X$ and $Y$
  yields the following diagram in $\mathcal C_A$.
  \[
  \xymatrix{
    T_X  \ar[r]^{u_X} &
    X \ar[d]^f \\
    T_Y \ar[r]^{u_Y} &
    Y \,.
  }
  \]
  The image of $f$ 
  in ${\rm mod}\,B$ is equal  to that of the mapping 
  \[
  {\rm Hom}_{\mathcal C_A}(T,X)/{\rm Hom}_{\mathcal C_A}(T,X)E''\to
  {\rm Hom}_{\mathcal C_A}(T,Y) / {\rm Hom}_{\mathcal C_A}(T,Y)E''
  \]
  given by $\overline u \mapsto \overline{f\circ u}$,  where the
  notation $\overline g$ stands for the residual class of a morphism
  $g$ in its respective quotient. If
  $\overline{f\circ u}$ vanishes for every $\overline u$ then it 
  vanishes for $u=u_X$. Because $\overline{f\circ u_X}=0$, there exist $T_0\in {\rm add}(T)$,
  $e_0\in E''$ and a morphism $g_0\colon T_0\to Y$ such that $f\circ u_X
  = g_0\circ e_0$. Because $u_Y$ is a minimal ${\rm
    add}(T)$-approximation, $g_0$ factors through it and thus there
  exists a morphism $g'\colon T_0\to T_Y$ such that $u_Y\circ
  g'=g_0$. Setting $e=g'\circ e_0$ we get that $e\in E''$ because the
  latter is an ideal and $u_Y\circ e=f\circ u_X$.
  \[
  \xymatrix{
    T_X \ar[d]_{e} \ar[r]^{u_X} \ar[d]_{}&
    X \ar[d]^f  \\
    T_Y \ar[r]^{u_Y} &
    Y \,.
  }
  \]
  This proves that $f$
  belongs to $\mathcal K$. Conversely if $f$ belongs to $\mathcal K$
  then it is immediate that its image in ${\rm mod}\,B$ is zero.
\end{proof}

\setcounter{thm}{1}
\begin{thm}
  \label{sec:module-categ-part-3}
The composed functor $(-\underset{\widetilde C}{\otimes} B) \circ
  {\rm Hom}_{\mathcal C_A}(T,-)$ induces an equivalence ${\rm
    mod}\,B\simeq \mathcal C_A/\mathcal K$.
\end{thm}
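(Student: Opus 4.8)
The plan is to show that the composed functor $(-\underset{\widetilde C}{\otimes} B) \circ {\rm Hom}_{\mathcal C_A}(T,-)\colon \mathcal C_A\to {\rm mod}\,B$ factors through the quotient $\mathcal C_A/\mathcal K$ and that the induced functor is an equivalence. Two facts from the excerpt do most of the work: Lemma~\ref{sec:module-categ-part-4} establishes that the composed functor is full and dense, and Lemma~\ref{sec:module-categ-part-2} identifies $\mathcal K$ precisely as its kernel (the class of morphisms sent to zero). So the content that remains is a formal argument turning "full, dense, with a known kernel" into "induces an equivalence on the quotient".

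First I would verify that $\mathcal K$ is an ideal of $\mathcal C_A$ — this is implicit in Lemma~\ref{sec:module-categ-part-2} but I would confirm it is closed under composition on both sides and under addition, using the approximation description; then the additive quotient category $\mathcal C_A/\mathcal K$ makes sense and the canonical projection $\mathcal C_A\to \mathcal C_A/\mathcal K$ is full, dense, and the identity on objects. Because the composed functor $F=(-\underset{\widetilde C}{\otimes} B) \circ {\rm Hom}_{\mathcal C_A}(T,-)$ kills exactly the morphisms in $\mathcal K$, by the universal property of the quotient it factors uniquely as $F=\overline F\circ \pi_{\mathcal K}$, where $\pi_{\mathcal K}\colon \mathcal C_A\to \mathcal C_A/\mathcal K$ is the projection and $\overline F\colon \mathcal C_A/\mathcal K\to {\rm mod}\,B$ is the induced functor. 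I must then check $\overline F$ is an equivalence.

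For this I would argue that $\overline F$ is faithful, full, and dense. Density is inherited directly from $F$, which is dense by Lemma~\ref{sec:module-categ-part-4}. Fullness of $\overline F$ likewise follows since $F$ is full and $\pi_{\mathcal K}$ is surjective on objects: given a morphism $M\to N$ in ${\rm mod}\,B$ between images of objects $X,Y$, fullness of $F$ produces a morphism $X\to Y$ in $\mathcal C_A$ realizing it, whose image under $\pi_{\mathcal K}$ maps to it under $\overline F$. Faithfulness is the crucial point and is exactly where Lemma~\ref{sec:module-categ-part-2} is used: a morphism $\bar f$ in $\mathcal C_A/\mathcal K$ is sent to zero by $\overline F$ if and only if a representative $f$ lies in $\ker F=\mathcal K$, which by definition of the quotient means $\bar f=0$. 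Combining faithful, full, and dense gives that $\overline F$ is an equivalence of categories, which is the assertion.

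I expect the main obstacle to be the bookkeeping at the level of Hom-spaces: one must make sure that the kernel $\mathcal K$ described via the approximation condition in Lemma~\ref{sec:module-categ-part-2} really is a two-sided ideal of morphisms (so that the additive quotient is well defined), and that $F$ induces a \emph{bijection} on Hom-spaces after quotienting rather than merely a surjection. Concretely, the faithfulness step requires that $F$ restricted to each Hom-space ${\rm Hom}_{\mathcal C_A}(X,Y)$ has kernel exactly $\mathcal K(X,Y)$ and not something larger; this is precisely the equality proved in Lemma~\ref{sec:module-categ-part-2}, so once that lemma is invoked the verification is formal. The delicate aspect one should not gloss over is that the quotient $\mathcal C_A/\mathcal K$ is the \emph{additive} (ideal) quotient, so the universal property applies on the nose and no further idempotent-completion or localization issue arises.
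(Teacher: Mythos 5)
Your proposal is correct and matches the paper's proof exactly: the paper also deduces the theorem immediately from Lemma~\ref{sec:module-categ-part-4} (fullness and density) and Lemma~\ref{sec:module-categ-part-2} (identification of the kernel with $\mathcal K$). The formal verifications you spell out (that $\mathcal K$ is an ideal and that the induced functor on the additive quotient is faithful, full, and dense) are precisely what the paper leaves implicit in its one-line proof.
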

\begin{proof}
  This follows immediately from lemmata \ref{sec:module-categ-part-4}
  and \ref{sec:module-categ-part-2}.
\end{proof}

Note that, taking $E''$ equal to $0$ yields the main theorem of
\cite{MR2247893}.

This theorem entails several consequences. Let  $C$ be a tilted
algebra. Recall that the \emph{cluster repetitive} algebra is the
locally finite dimensional algebra without identity
\[
\check{C} =
\left(
  \begin{matrix}
    \ddots & 0& & &0\\
    \ddots & C_{-1} & 0& &\\
    0 & E_0 & C_0 & 0 \\
    & 0& E_1 & C_1 & \\
    0&  & 0& \ddots &\ddots
  \end{matrix}
\right)
\]
where the matrices have only finitely many nonzero  entries, $C_i=C$ and
$E_i={\rm Ext}^2_C(DC,C)$ for all $i\in\mathbb Z$, all remaining
entries are zero and the multiplication is induced from that of $C$,
the $C-C$-bimodule structure of ${\rm Ext}^2_C(DC,C)$ and the zero map
${\rm Ext}^2_C(DC,C)\otimes {\rm Ext}^2_C(DC,C)\to 0$. The identity
maps $C_i\to C_{i-1}$ and $E_i\to E_{i-1}$ induce an automorphism
$\varphi$ of $\widetilde C$ and the orbit category
$\check{C}/\langle \varphi\rangle$ inherits from
$\check{C}$ a $\k$-algebra structure isomorphic to $\widetilde
C=C\ltimes {\rm Ext}^2_C(DC,C)$. Thus the projection functor $G\colon
\check{C}\to \widetilde C$ is a Galois covering with infinite
cyclic group generated by $\varphi$. We denote by $G_{\lambda}\colon
{\rm mod}\,\check{C}\to {\rm mod}\,\widetilde C$ the
associated push-down functor (see \cite{MR654725}).

Now let $A$ be a hereditary algebra and $T$ be a tilting $A$-module
such that $C={\rm End}_A(T)$. Consider the automorphism
$F=\tau^{-1}\circ [1]$ in $\mathcal D^b({\rm mod}\,A)$ and let
$\pi'\colon \mathcal D^b({\rm mod}\,A)\to \mathcal C_A$ denote the
canonical projection onto the cluster category. We are now able to
state the first corollary.
\begin{cor}
  With the above notation, there exists a commutative diagram of full
  and dense functors
  \[
  \xymatrix{
    \mathcal D^b({\rm mod}\,A) \ar@{->>}[rrrr]^{
      {\rm Hom}_{\mathcal D^b({\rm mod}\,A)}(\oplus_{i\in \mathbb
        Z}F^iT,-)
    }
    \ar[d]_{\pi\pi'} &&&&
    {\rm mod}\,\check{C}
    \ar[d]^{
      (-\underset{\widetilde C}{\otimes} B) \circ G_{\lambda}
    } \\
    \mathcal B \ar[rrrr]_{
      {\rm Hom}_{\mathcal B}(\pi'T,-)
    } &&&&
    {\rm mod}\,B\,.
  }
  \]
\end{cor}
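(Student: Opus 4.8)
The plan is to obtain the asserted square as the vertical composite of two commutative squares that share the middle row $\mathcal C_A\xrightarrow{\,{\rm Hom}_{\mathcal C_A}(T,-)\,}{\rm mod}\,\widetilde C$. The lower one is precisely the square of Lemma~\ref{sec:module-categ-part-4} (whose tilting object is $\pi'T$), all of whose functors are already full and dense. The upper one is the covering square
\[
\xymatrix{
  \mathcal D^b({\rm mod}\,A) \ar[rr]^{{\rm Hom}_{\mathcal D^b}(\oplus_{i\in\mathbb Z}F^iT,-)} \ar[d]_{\pi'} && {\rm mod}\,\check C \ar[d]^{G_\lambda}\\
  \mathcal C_A \ar[rr]_{{\rm Hom}_{\mathcal C_A}(T,-)} && {\rm mod}\,\widetilde C
}
\]
relating the orbit presentation $\mathcal C_A=\mathcal D^b({\rm mod}\,A)/F$ of the cluster category to the Galois covering $G\colon\check C\to\widetilde C=\check C/\langle\varphi\rangle$. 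Stacking the former on top of the latter then produces the diagram of the corollary: the left composite is $\pi\pi'$, the right composite is $(-\underset{\widetilde C}{\otimes}B)\circ G_\lambda$, and the bottom edge is ${\rm Hom}_{\mathcal B}(T,-)={\rm Hom}_{\mathcal B}(\pi'T,-)$.

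First I would verify commutativity of the covering square by comparing module structures. For $X\in\mathcal D^b({\rm mod}\,A)$ the $\check C$-module ${\rm Hom}_{\mathcal D^b}(\oplus_{i\in\mathbb Z}F^iT,X)$ has graded components ${\rm Hom}_{\mathcal D^b}(F^iT,X)$, indexed by the fibre of $G$ over each vertex; the push-down $G_\lambda$ collapses this $\mathbb Z$-grading, yielding $\bigoplus_i{\rm Hom}_{\mathcal D^b}(F^iT,X)\cong\bigoplus_i{\rm Hom}_{\mathcal D^b}(T,F^{-i}X)$, which is exactly ${\rm Hom}_{\mathcal C_A}(T,\pi'X)$ by the definition of morphisms in the orbit category. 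The delicate point is that this identification be compatible with the two $\widetilde C=\check C/\langle\varphi\rangle$-module structures, i.e.\ that the summand $E\cong{\rm Hom}_{\mathcal D^b}(T,FT)$ of $\widetilde C$, recalled from \cite{MR2409188}, act through the grading shift induced by $\varphi$; this is where the matching of $\varphi$ with $F=\tau^{-1}\circ[1]$ enters.

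For the full-and-dense statements I would invoke covering theory as in \cite{MR654725}. The functor ${\rm Hom}_{\mathcal C_A}(T,-)$ is full and dense by \cite[Proposition~2.1]{MR2247893}, and the isomorphism ${\rm Hom}_{\widetilde C}(G_\lambda M,G_\lambda N)\cong\bigoplus_{\gamma\in\langle\varphi\rangle}{\rm Hom}_{\check C}(M,{}^\gamma N)$ attached to the Galois covering $G$ lets one transport fullness and density to the covering level, giving that ${\rm Hom}_{\mathcal D^b}(\oplus_{i\in\mathbb Z}F^iT,-)$ and $G_\lambda$ are full and dense. As a composite of full and dense functors is again full and dense, the top edge, the bottom edge and the right-hand vertical of the assembled rectangle are full and dense; the left-hand vertical $\pi\pi'$ is the canonical projection.

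The step I expect to be the main obstacle is the density of the covering functor ${\rm Hom}_{\mathcal D^b}(\oplus_{i\in\mathbb Z}F^iT,-)$ onto all of ${\rm mod}\,\check C$, as opposed to the density of its push-down, which is immediate from the lower square. Pushing density down the covering forgets the $\mathbb Z$-grading, so reconstructing a genuinely graded---hence liftable---$\check C$-module from an arbitrary one is precisely where the Galois covering structure of $\check C$ over $\widetilde C$ must be used, rather than a purely formal two-out-of-three argument.
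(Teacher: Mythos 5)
Your proposal follows the same route as the paper's proof: there, too, the corollary is obtained by stacking precisely your covering square (relating $\mathcal D^b({\rm mod}\,A)$ and ${\rm mod}\,\check C$ to $\mathcal C_A$ and ${\rm mod}\,\widetilde C$) on top of the square of Lemma~\ref{sec:module-categ-part-4} and composing. The only difference is that the paper does not re-derive the covering square: its commutativity and the density of its functors are quoted from \cite[Theorem 9 of 2.3]{MR2497589}, fullness of ${\rm Hom}_{\mathcal D^b({\rm mod}\,A)}(\oplus_{i\in\mathbb Z}F^iT,-)$ from \cite[Proposition 7 of 2.1]{MR2497589}, and fullness of ${\rm Hom}_{\mathcal C_A}(\pi'T,-)$ from \cite[Proposition 2.1]{MR2247893}. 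In particular the density of ${\rm Hom}_{\mathcal D^b({\rm mod}\,A)}(\oplus_{i\in\mathbb Z}F^iT,-)$ onto all of ${\rm mod}\,\check C$, which you rightly single out as the step that cannot be recovered formally from the lower square, is exactly the nontrivial content of the cited theorem; so your sketch is structurally identical to the paper's argument, with that one ingredient supplied by the literature rather than proved anew.
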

\begin{proof}
  It is shown in \cite[Theorem 9 of 2.3]{MR2497589}, that there is a
  commutative diagram of dense functors
  \[
  \xymatrix{
    \mathcal D^b({\rm mod}\,A) \ar@{->>}[rrrr]^{
      {\rm Hom}_{\mathcal D^b({\rm mod}\,A)}(\oplus_{i\in \mathbb
        Z}F^iT,-)
    }
    \ar[d]_{\pi'} &&&&
    {\rm mod}\,\check{C}
    \ar[d]^{
      G_{\lambda}
    } \\
    \mathcal C_A \ar[rrrr]_{
      {\rm Hom}_{\mathcal C_A}(\pi'T,-)
    } &&&&
    {\rm mod}\,\widetilde C\,.
  }
  \]
  These functors are also full: $\pi'$ is full by definition, ${\rm Hom}_{\mathcal D^b({\rm mod}\,A)}(\oplus_{i\in \mathbb
    Z}F^iT,-)$ is full because of \cite[Proposition 7 of
  2.1]{MR2497589}, and ${\rm Hom}_{\mathcal C_A}(\pi'T,-)$ is full
  because of \cite[Proposition 2.1]{MR2247893}. The required commutative square
  follows upon composing this diagram with the one of lemma
  \ref{sec:module-categ-part-4} above.
\end{proof}

As a consequence of this corollary, there is also a relation with
the repetitive algebra $\widehat C$ of $C$, this is the algebra
\[
\widehat C=
\left(
  \begin{matrix}
    \ddots & 0& & &0\\
    \ddots & C_{-1} & 0& &\\
    0 & Q_0 & C_0 & 0 \\
    & 0& Q_1 & C_1 & \\
    0&  & 0& \ddots &\ddots
  \end{matrix}
\right)
\]
where matrices have only finitely many nonzero entries, $C_i=C$ and
$Q_i=DC$ for all $i\in \mathbb Z$, all
remaining entries are zero, addition is the usual addition of matrices
and multiplication is induced from that of $C$, the $C-C$-bimodule
structure of $DC$ and the zero maps $DC\otimes DC\to 0$. The Nakayama
automorphism $\nu$ of $\widehat C$ is the one  induced by the identity
maps $C_i\to C_{i-1}$, $Q_i\to Q_{i-1}$. Then the quotient category
$\widehat C/\langle \nu\rangle$ is isomorphic to the trivial extension
$T(C)=C\ltimes DC$ of $C$ by its minimal injective cogenerator $DC$
(see \cite{MR693045}). There is a natural functor from ${\rm
  mod}\,\widehat C$ to ${\rm mod}\,\check C$: Indeed, let
$p\colon {\rm mod}\,\widehat C\to \underline{\rm mod}\,\widehat
C$ denote the canonical projection, and define $\Phi\colon {\rm
  mod}\,\widehat C\to {\rm mod}\,\check C$ to be the
composition
\[
{\rm mod}\,\widehat C \xrightarrow p \underline{\rm mod} \widehat C
\xrightarrow{
  \underline{\rm Hom}_{\widehat C}(
  \oplus_{i\in \mathbb Z} \tau^i\Omega^{-i}C,-
  )}
{\rm mod}\,\check C\,.
\]

\begin{cor}
  With the above notation, there exists a commutative diagram of full
  and dense functors
  \[
  \xymatrix{
    {\rm mod}\,\widehat C \ar@{->>}[d]_{\pi''} \ar@{->>}[rr]^{\Phi} &&
    {\rm mod}\,\check C \ar@{->>}[d]^{(-\underset{\widetilde C}\otimes
      B) \circ G_\lambda}\\ 
    \mathcal B \ar@{->>}[rr]^{{\rm Hom}_{\mathcal B}(\tau'T,-)} && {\rm
      mod}\,B\,.
  }
  \]
\end{cor}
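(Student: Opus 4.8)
The plan is to transport the commutative diagram of the previous corollary along Happel's triangle equivalence. Since $C$ has finite global dimension, Happel's theorem provides a triangle equivalence between the stable category $\underline{\rm mod}\,\widehat C$ and $\mathcal D^b({\rm mod}\,C)$; composing it with the derived equivalence $\mathcal D^b({\rm mod}\,C)\simeq\mathcal D^b({\rm mod}\,A)$ induced by the tilting module $T$ yields an equivalence $H\colon\underline{\rm mod}\,\widehat C\xrightarrow{\sim}\mathcal D^b({\rm mod}\,A)$. Under $H$ the regular module $C$ is sent to $T$, because the equivalence ${\rm RHom}_A(T,-)$ carries $T$ to ${\rm End}_A(T)=C$ and hence its inverse carries $C$ back to $T$.

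The crucial step is to compute the image under $H$ of the object $\oplus_{i\in\mathbb Z}\tau^i\Omega^{-i}C$ occurring in the definition of $\Phi$. Under Happel's equivalence the cosyzygy $\Omega^{-1}$, which is the suspension of $\underline{\rm mod}\,\widehat C$, corresponds to the shift $[1]$, while the Auslander--Reiten translation of $\widehat C$ corresponds to that of $\mathcal D^b({\rm mod}\,A)$; matching these conventions one obtains $H\big(\oplus_{i\in\mathbb Z}\tau^i\Omega^{-i}C\big)=\oplus_{i\in\mathbb Z}F^iT$, where $F=\tau^{-1}\circ[1]$ is the automorphism used in the previous corollary. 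Since $\underline{\rm Hom}_{\widehat C}(X,-)$ becomes ${\rm Hom}_{\mathcal D^b({\rm mod}\,A)}(HX,H-)$ under $H$, this gives a natural isomorphism $\Phi\cong{\rm Hom}_{\mathcal D^b({\rm mod}\,A)}\big(\oplus_{i\in\mathbb Z}F^iT,-\big)\circ H\circ p$; that is, $\Phi$ is precisely the top horizontal functor of the previous corollary precomposed with $H\circ p$. I expect this identification to be the main obstacle, as it requires a careful analysis of how the Nakayama and Auslander--Reiten structure of the repetitive algebra interacts with the Serre functor of $\mathcal D^b({\rm mod}\,A)$; in particular one must watch the direction of $\tau$ so that $\tau^i\Omega^{-i}$ yields $F^i$ rather than $F^{-i}$.

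I would then define the left-hand functor to be the composite $\pi''=\pi\pi'\circ H\circ p\colon{\rm mod}\,\widehat C\to\mathcal B$. The asserted square is obtained by precomposing the whole commutative diagram of the previous corollary with $H\circ p\colon{\rm mod}\,\widehat C\to\mathcal D^b({\rm mod}\,A)$: its top-right composite becomes $(-\underset{\widetilde C}{\otimes}B)\circ G_\lambda\circ\Phi$ and its bottom-left composite becomes ${\rm Hom}_{\mathcal B}(\pi'T,-)\circ\pi''$, and the two agree because the diagram of the previous corollary already commutes.

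It remains to check that the four functors are full and dense. The projection $p$ to the stable category is full and dense, being the identity on objects and surjective on morphisms; $H$ is an equivalence; and every functor appearing in the diagram of the previous corollary is full and dense. As $\Phi$, $\pi''$ and the right-hand vertical $(-\underset{\widetilde C}{\otimes}B)\circ G_\lambda$ are composites of such functors, they are full and dense, and ${\rm Hom}_{\mathcal B}(\pi'T,-)$ is full and dense by the previous corollary.
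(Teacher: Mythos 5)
Your overall strategy---transporting the commutative diagram of the previous corollary along Happel's equivalence $H\colon\underline{\rm mod}\,\widehat C\xrightarrow{\sim}\mathcal D^b({\rm mod}\,A)$---is viable, and it is close in spirit to the paper's proof. The difference is that the paper does not re-derive the compatibility between the repetitive-algebra picture and the derived-category picture: it cites \cite{MR2497589} twice, once for a commutative square relating $\Phi$ and $G_\lambda$ through the orbit category $\mathcal C_C$ of $\underline{\rm mod}\,\widehat C$ under $F_C=\tau^{-1}\Omega^{-1}$, and once for an equivalence $\eta\colon\mathcal C_A\to\mathcal C_C$ compatible with the Hom-functors into ${\rm mod}\,\widetilde C$; it then composes these with lemma~\ref{sec:module-categ-part-4} and sets $\pi''=\pi\circ\eta^{-1}\circ\widehat\pi$, which is the same functor as your $\pi\pi'\circ H\circ p$. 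In effect you are attempting to prove the two cited results rather than cite them, which is legitimate---but that is exactly where your argument has a gap.

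The gap is the identification $H\bigl(\oplus_{i\in\mathbb Z}\tau^i\Omega^{-i}C\bigr)\cong\oplus_{i\in\mathbb Z}F^iT$, which you assert ("matching these conventions") but never derive, and which is in fact inconsistent with the facts you state. Since $H$ commutes with $\tau$ and sends $\Omega^{-1}$ to $[1]$, and $H(C)=T$, one gets $H(\tau^i\Omega^{-i}C)=\tau^iT[i]=(\tau[1])^iT$, whereas $F^iT=\tau^{-i}T[i]$; these orbits are genuinely different. Worse, by Serre duality ${\rm Hom}_{\mathcal D^b({\rm mod}\,A)}(T,\tau T[1])\cong D\,{\rm Hom}_A(T,T)=DC$, so the endomorphism category of $\oplus_i(\tau[1])^iT$ is the ($\mathbb Z$-covering of the) trivial extension $C\ltimes DC$, i.e.\ of the repetitive algebra $\widehat C$, not of the cluster repetitive algebra $\check C$: with that object your Hom-functor does not even take values in ${\rm mod}\,\check C$. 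What this computation actually uncovers is a sign slip in the displayed definition of $\Phi$: the object must be the $F_C$-orbit $\oplus_iF_C^iC=\oplus_i\tau^{-i}\Omega^{-i}C$, consistently with the paper's own proof, which forms the orbit category of $\underline{\rm mod}\,\widehat C$ under $F_C=\tau^{-1}\Omega^{-1}$. With that reading, $H\circ F_C\cong F\circ H$ yields the identification at once, so your instinct about "watching the direction of $\tau$" was right, but the claim cannot be obtained the way you state it. Finally, even after this correction, the isomorphism $\Phi\cong{\rm Hom}_{\mathcal D^b({\rm mod}\,A)}(\oplus_iF^iT,-)\circ H\circ p$ must be an isomorphism of functors \emph{into ${\rm mod}\,\check C$}: the isomorphism that $H$ induces between $\underline{\rm End}_{\widehat C}(\oplus_iF_C^iC)$ and ${\rm End}_{\mathcal D^b({\rm mod}\,A)}(\oplus_iF^iT)$ has to be compatible with the fixed identifications of both with $\check C$ (and hence with the push-down $G_\lambda$ on the right-hand side of the square). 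This is precisely the content of the results the paper imports from \cite{MR2497589}, and it is the part you explicitly defer as "the main obstacle"; without it the commutativity of the transported square is not established.
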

\begin{proof}
  Let  $\mathcal C_C$ be  the orbit category of $\underline{\rm
    mod}\,\widehat C$ under the action of the automorphism
  $F_C\colon \underline{\rm mod}\,\widehat C\to \underline{\rm
    mod}\,\widehat C$ defined by $F_C=\tau^{-1}\Omega^{-1}$ and
  the morphism space from $(F_C^iX)_{i\in \mathbb Z}$ to
  $(F_C^iY)_{i\in \mathbb Z}$ is $\oplus_{i\in \mathbb
    Z}\underline{\rm Hom}_{\widehat C}(X,F_C^iY)$. Also let $\widehat
  \pi$ be the composition of the two projection functors $p\colon
  {\rm mod}\,\widehat C\to \underline{\rm mod}\,\widehat C$ and
  $\underline{\widehat \pi}\colon \underline{\rm mod}\,\widehat C
  \to \mathcal C_C$. Then, there is
  a commutative diagram of full and dense functors, see \cite[Theorem
  17 of 3.4]{MR2497589}:
  \[
  \xymatrix{
    {\rm mod}\,\widehat C  \ar@{->>}[rr]^\Phi
    \ar@{->>}[d]_{\widehat \pi}&& {\rm mod}\check C \ar@{->>}[d]^{G_\lambda} \\
    \mathcal C_C \ar@{->>}[rr]^{{\rm Hom}_{\mathcal C_C}(\widehat
      \pi C,-)} && {\rm mod}\widetilde C\,.
  }
  \]

  Moreover, it follows from \cite[Lemma 15 of 3.2]{MR2497589} that
  there is a commutative diagram of full and dense functors
  \[
  \xymatrix{
    \mathcal C_A\ar@{->>}[rd]^{{\rm Hom}_{\mathcal C_A}(\pi T,-)}
    \ar@{->>}[dd]_{\eta} \\
    & {\rm mod}\,\widetilde C \\
    \mathcal C_C \ar@{->>}[ru]_{{\rm Hom}_{\mathcal
        C_C}(\underline{\widehat \pi} C,-)}
  }
  \]
  with $\eta$ an equivalence.

  The required diagram follows upon composing these two diagrams
  with the one of lemma \ref{sec:module-categ-part-4} above. The
  functor $\pi''\colon {\rm mod}\,\widehat C\to \mathcal B$ is
  equal to the composition $\pi\circ \eta^{-1}\circ \widehat \pi$.
\end{proof}

\begin{exple}\ 
  \begin{enumerate}[(a)]
  \item Let $C$ be the tilted algebra given by the quiver
    \[
    \xymatrix{
      \bullet & & \bullet \ar[ld]_{\alpha} \\
      & \bullet \ar[lu]_{\beta} \ar[ld]_\delta & \\
      \bullet & & \bullet \ar[lu]_\gamma
    }
    \]
    bound by $\alpha\beta=0$, $\gamma\delta=0$. Then its relation
    extension $\widetilde C$ is given by the quiver
    \[
    \xymatrix{
      1 \ar[rr]^\lambda && 4 \ar[ld]_\alpha \\
      & 3 \ar[lu]_\beta \ar[ld]_\delta \\
      2 \ar[rr]_\mu && 5 \ar[lu]_\gamma
    }
    \]
    and the potential $W=\alpha\beta\lambda+\gamma\delta\mu$. As
    seen before in section \ref{sec:decomp-potent-exampl}, this is
    a direct sum decomposition $W=W_1+W_2$ with
    $W_1=\alpha\beta\lambda$, $W_2=\gamma\delta\mu$. Let $E'$ be
    the direct summand of the $C-C$-bimodule $E={\rm
      Ext}^2_C(DC,C)$ corresponding to $W_1$. Then $B=C\ltimes E'$
    is given by the quiver
    \[
    \xymatrix{
      1 \ar[rr]^\lambda && 4 \ar[ld]^\alpha \\
      & 3 \ar[lu]^\beta \ar[ld]_\delta \\
      2 && 5 \ar[lu]_\gamma
    }
    \]
    bound by $\alpha\beta=0$, $\beta\lambda=0$, $\gamma\delta=0$,
    $\lambda\alpha=0$. Its Auslander-Reiten quiver $\Gamma({\rm
      mod}\,B)$ is given by
    \[
    \xymatrix@!@=12pt{
      &&&
      {\begin{smallmatrix}
          4\\3\\2
        \end{smallmatrix}}\ar[rd]
      \\
      1\ar[rd] &&
      {\begin{smallmatrix}
          3\\2
        \end{smallmatrix}}\ar[rd]\ar[ru]
      &&
      {\begin{smallmatrix}
          4\\3
        \end{smallmatrix}}\ar[rd]
      &&
      5
      \\
      &
      {\begin{smallmatrix}
          &3&\\
          1&&2
        \end{smallmatrix}}\ar[rd]\ar[ru]
      &&
      3\ar[rd]\ar[ru]
      &&
      {\begin{smallmatrix}
          4&&5\\
          &3
        \end{smallmatrix}}\ar[rd]\ar[ru]
      \\
      2\ar[ru] &&
      {\begin{smallmatrix}
          3\\1
        \end{smallmatrix}}\ar[rd]\ar[ru]
      &&
      {\begin{smallmatrix}
          5\\3
        \end{smallmatrix}}\ar[ru]
      &&4\ar[rd]
      &&
      1\\
      &&&
      {\begin{smallmatrix}
          5\\3\\1
        \end{smallmatrix}}\ar[ru]
      &&&&
      {\begin{smallmatrix}
          1\\4
        \end{smallmatrix}}\ar[ru]
    }
    \]
    where the two copies of the simple $S_1=1$ are identified. The
    reader may compare this quiver with $\Gamma({\rm
      mod}\,\widetilde C)$
    \[
    \xymatrix@!@=12pt{
      &&&
      {\begin{smallmatrix}
          4\\3\\2
        \end{smallmatrix}}\ar[rd]
      &&&&
      {
        \begin{smallmatrix}
          2\\5
        \end{smallmatrix}}\ar[rd]
      \\
      *+[o][F-]{1}\ar[rd] &&
      {\begin{smallmatrix}
          3\\2
        \end{smallmatrix}}\ar[rd]\ar[ru]
      &&
      {\begin{smallmatrix}
          4\\3
        \end{smallmatrix}}\ar[rd]
      &&
      5\ar[ru] && *+[o][F-]{2}
      \\
      &
      {\begin{smallmatrix}
          &3&\\
          1&&2
        \end{smallmatrix}}\ar[rd]\ar[ru]
      &&
      3\ar[rd]\ar[ru]
      &&
      {\begin{smallmatrix}
          4&&5\\
          &3
        \end{smallmatrix}}\ar[rd]\ar[ru]
      \\
      *+[o][F-]{2}\ar[ru] &&
      {\begin{smallmatrix}
          3\\1
        \end{smallmatrix}}\ar[rd]\ar[ru]
      &&
      {\begin{smallmatrix}
          5\\3
        \end{smallmatrix}}\ar[ru]
      &&4\ar[rd]
      &&
      *+[o][F-]{1}\\
      &&&
      {\begin{smallmatrix}
          5\\3\\1
        \end{smallmatrix}}\ar[ru]
      &&&&
      {\begin{smallmatrix}
          1\\4
        \end{smallmatrix}}\ar[ru]
    }
    \]
    where the two encircled copies of $S_1=1$ are identified as are
    the two encircled copies of  $S_2=2$. It
    is easily seen that $\Gamma({\rm mod}\,B)$ is obtained from
    $\Gamma({\rm mod}\,\widetilde C)$ by deleting the $\widetilde
    C$-module $P_2=
    \begin{smallmatrix}
      2\\5
    \end{smallmatrix}$.
  \item Of course, one may have $\widetilde C$
    representation-infinite but $B$ representation-finite. Let $C$
    be given by the quiver
    \[
    \xymatrix{
      & 2\ar[ld]_\beta \\
      1 && 4 \ar[lu]_\alpha \ar[ld]^\lambda \\
      & 3 \ar[lu]^\mu
    }
    \]
    bound by $\alpha\beta=0$, $\lambda\mu=0$. Its relation
    extension $\widetilde C$ is the cluster tilted algebra of type
    $\widetilde{\mathbb A}$ given by the quiver
    \[
    \xymatrix{
      & 2\ar[ld]_\beta \\
      1 \ar@<2pt>[rr]^\gamma \ar@<-2pt>[rr]_\nu && 4 \ar[lu]_\alpha \ar[ld]^\lambda \\
      & 3 \ar[lu]^\mu
    }
    \]
    and the potential $W=\alpha\beta\gamma+\lambda\mu\nu$. This is
    a representation-infinite algebra. However, if we let $E'$ be
    the direct summand of the $C-C$-bimodule corresponding to
    $W_1=\alpha\beta\gamma$, then $B=C\ltimes E'$ is given by the quiver
    \[
    \xymatrix{
      & 2\ar[ld]_\beta \\
      1 \ar[rr]^\gamma  && 4 \ar[lu]_\alpha \ar[ld]^\lambda \\
      & 3 \ar[lu]^\mu
    }
    \]
    bound by $\alpha\beta=0$, $\beta\gamma=0$, $\gamma\alpha=0$,
    $\lambda\mu=0$. The algebra $B$ is representation-finite. Its Auslander-Reiten quiver is given by
    \[
    \xymatrix@!@=10pt{
      & {
        \begin{smallmatrix}
          4\\2
        \end{smallmatrix}
      }\ar[rd] &&&& {
        \begin{smallmatrix}
          2\\1
        \end{smallmatrix}
      }\ar[rd] && 3\ar[rd] \\
      {
        \begin{smallmatrix}
          &4\\3&&2
        \end{smallmatrix}
      }\ar[rd]\ar[ru]
      && 4\ar[rd] && 1\ar[rd]\ar[ru] && {
        \begin{smallmatrix}
          2&&3\\&1
        \end{smallmatrix}
      }\ar[rd]\ar[ru] && {
        \begin{smallmatrix}
          &4\\3&&2
        \end{smallmatrix}
      } \\
      & {
        \begin{smallmatrix}
          4\\3
        \end{smallmatrix}
      }\ar[rd]\ar[ru] && {
        \begin{smallmatrix}
          1\\4
        \end{smallmatrix}
      }\ar[rd]\ar[ru] && {
        \begin{smallmatrix}
          3\\ 1
        \end{smallmatrix}
      }\ar[ru] && 2\ar[ru]\\
      && {
        \begin{smallmatrix}
          1\\4\\3
        \end{smallmatrix}
      }\ar[ru]\ar[rd] && {
        \begin{smallmatrix}
          3\\1\\ 4
        \end{smallmatrix}
      }\ar[ru] \\
      &&&
      {
        \begin{smallmatrix}
          3\\1\\4\\ 3
        \end{smallmatrix}
      }\ar[ru]          
    }\]
    where the two copies of the module $P_4=
    \begin{smallmatrix}
      &4\\
      3&&2
    \end{smallmatrix}$ are identified.
  \end{enumerate}
\end{exple}

\section{Local slices}
\label{sec:local-slices-2}

\subsection{Preliminary facts}
\label{sec:preliminary-facts}

The notion of local slice was defined in \cite{ABS08} for the study of
cluster tilted algebras. We recall the definition.

\begin{defn}
  Let $A$ be an algebra. A full subquiver $\Sigma$ of $\Gamma({\rm
    mod}\,A)$ is called a \emph{local slice} if:
  \begin{enumerate}
  \item It is a presection, that is, if $L\to M$ is an irreducible
    morphism between indecomposables in ${\rm mod}\,A$, then
    \begin{enumerate}[(a)]
    \item $L\in \Sigma_o$ implies $M\in \Sigma_o$ or $\tau_AM\in
      \Sigma_o$,
    \item $M\in \Sigma_o$ implies $L\in \Sigma_o$ or $\tau_A^{-1}L\in \Sigma_o$.
    \end{enumerate}
  \item It is sectionally convex, that is, if $L=M_0\to M_1\to
    \cdots\to M_n=M$ is a sectional path in $\Gamma({\rm mod}\,A)$
    such that $L,M\in \Sigma_o$, then $M_i\in \Sigma_o$ for all $i$.
  \item $\vert \Sigma_o\vert= {\rm rk}(K_0(A))$.
  \end{enumerate}
  Here $\vert \Sigma_o\vert$ denotes the number of points of $\Sigma$.
\end{defn}

It is shown in \cite{ABS08} that, if $C$ is a tilted algebra, and
$\Sigma$ is a complete slice in $\Gamma({\rm mod}\,C)$, then $\Sigma$
embeds fully as a local slice in $\Gamma({\rm mod}\,\widetilde C)$,
where $\widetilde C$ denotes, as usual, the relation extension of $C$,
which is cluster tilted. However, local slices do not characterise
cluster tilted algebras, and it was asked in \cite{ABS08} to identify
the algebras which have local slices. Our objective in this section is
to prove that, if $A$ is an algebra such that there exist surjective
algebra morphisms $\widetilde C\twoheadrightarrow A\twoheadrightarrow
C$, then $A$ admits a local slice in its Auslander-Reiten quiver. For
this purpose, we need to recall the following well-known result of
Auslander and Reiten, see \cite[p. 187]{MR1476671}.

\begin{prop}
  \label{sec:preliminary-facts-1}
  Assume that there exists a surjective algebra homomorphism
  $A\twoheadrightarrow B$, and let $M$ be an indecomposable
  $B$-module. Then
  \begin{enumerate}[(a)]
  \item If $M$ is projective as an $A$-module, then $M$ is projective
    as a $B$-module. If $M$ is not projective as an $A$-module, then
    $\tau_BM$ is a submodule of $\tau_AM$.
  \item If $M$ is injective as an $A$-module, then $M$ is injective as
    a $B$-module. If $M$ is not injective as an $A$-module then
    $\tau_B^{-1}M$ is a quotient of $\tau_A^{-1}M$.
  \end{enumerate}
\end{prop}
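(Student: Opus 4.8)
The plan is to exploit that, via the surjection $\pi\colon A\twoheadrightarrow B$, the category ${\rm mod}\,B$ is the full subcategory of ${\rm mod}\,A$ of modules annihilated by $\mathfrak a:={\rm ker}\,\pi$. In particular the restriction functor ${\rm mod}\,B\to{\rm mod}\,A$ is exact and fully faithful (an $A$-linear map between $B$-modules is automatically $B$-linear), while its left adjoint $-\otimes_A B=(-)/(-)\mathfrak a$ is right exact, carries projectives to projectives, and restricts to the identity on ${\rm mod}\,B$. These formal properties drive both parts.

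For the first assertion of (a), suppose the $B$-module $M$ is projective as an $A$-module. Since $M$ is indecomposable, $M\simeq e_iA$ for a primitive idempotent $e_i$, and $M\mathfrak a=0$ forces $e_i\mathfrak a=0$. Hence $e_iA=e_iA/e_i\mathfrak a\simeq e_iB$, a direct summand of $B_B$, so $M$ is projective over $B$. The first assertion of (b) is the dual statement applied to $A^{\rm op}\twoheadrightarrow B^{\rm op}$: if $M$ is injective over $A$ then $DM$ is projective over $A^{\rm op}$, hence over $B^{\rm op}$ by what precedes, so $M=D(DM)$ is injective over $B$.

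For the second assertion of (a), I would compute the translate through the transpose, using $\tau=D\,{\rm Tr}$. Starting from a minimal projective presentation $P_1\xrightarrow{f}P_0\to M\to0$ over $A$ and applying $-\otimes_A B$, the identity $M\otimes_A B=M$ yields a (generally non-minimal) projective presentation $\overline P_1\xrightarrow{\overline f}\overline P_0\to M\to0$ over $B$, with $\overline P_i=P_i\otimes_A B$. The composite ${\rm Hom}_A(P,A)\twoheadrightarrow{\rm Hom}_A(P,B)\simeq{\rm Hom}_B(\overline P,B)$, surjective because $P$ is projective and natural by adjunction, produces a commutative square between the defining maps $f^{*}={\rm Hom}_A(f,A)$ and $\overline f^{\dagger}={\rm Hom}_B(\overline f,B)$ whose vertical arrows are surjective; passing to cokernels gives a surjection ${\rm Tr}_A M\twoheadrightarrow{\rm coker}(\overline f^{\dagger})$, and the latter equals ${\rm Tr}_B M$ up to a projective summand by well-definedness of the transpose modulo projectives. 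Applying $D$ turns this into an embedding of $\tau_B M$, as a direct summand of $D\,{\rm coker}(\overline f^{\dagger})$, into $D\,{\rm Tr}_A M=\tau_A M$; being a direct summand of a submodule of $\tau_A M$, it is itself isomorphic to a submodule. The second assertion of (b) then follows by applying this to $A^{\rm op}\twoheadrightarrow B^{\rm op}$ and $DM$ (non-projective over $A^{\rm op}$ exactly when $M$ is non-injective over $A$) and dualizing through $\tau_A^{-1}M=D(\tau_{A^{\rm op}}DM)$.

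The main obstacle I anticipate is the bookkeeping in the transpose step: one must verify that the induced $B$-presentation, though not minimal, is a genuine projective presentation of $M$, so that its transpose agrees with ${\rm Tr}_B M$ modulo projectives, and that the extra projective summand disappears harmlessly under $D$ (contributing only an injective direct summand that does not affect the submodule conclusion). A more conceptual alternative would restrict the almost split sequence $0\to\tau_B M\to E_B\to M\to0$ to ${\rm mod}\,A$, note it remains non-split, and use the almost split property over $A$ to obtain a morphism of short exact sequences and hence a comparison map $\tau_B M\to\tau_A M$; there the difficulty shifts to proving that this comparison map is a monomorphism.
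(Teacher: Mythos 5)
Your proposal cannot be compared against an in-paper argument, because the paper gives none: Proposition~\ref{sec:preliminary-facts-1} is recalled as a known result of Auslander and Reiten, with a bare citation to \cite[p.~187]{MR1476671}. Judged on its own merits, your proof is correct, and it is essentially the standard textbook-style argument that the citation points to. The projective case is fine: $M\simeq e_iA$ indecomposable projective with $M\mathfrak{a}=0$ (where $\mathfrak{a}=\ker\pi$) forces $e_i\mathfrak{a}=0$, so $M\simeq \bar e_iB$ is projective over $B$, and the injective statements follow by applying $D$ to $A^{\mathrm{op}}\twoheadrightarrow B^{\mathrm{op}}$. The transpose step also goes through as you describe: $-\otimes_AB$ applied to a minimal $A$-presentation $P_1\xrightarrow{f}P_0\to M\to 0$ gives a genuine (possibly non-minimal) $B$-presentation of $M=M/M\mathfrak{a}$; the maps $\mathrm{Hom}_A(P_i,A)\twoheadrightarrow\mathrm{Hom}_A(P_i,B)\simeq\mathrm{Hom}_B(\overline P_i,B)$ are surjective, left $A$-linear and compatible with $f^{*}$ and $\overline f^{\dagger}$, so they induce a surjection $\mathrm{Tr}_AM\twoheadrightarrow\mathrm{coker}(\overline f^{\dagger})$ of left $A$-modules. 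The two worries you flag are exactly the right ones and both resolve favourably: by uniqueness of minimal presentations, any presentation splits off a minimal one plus a split presentation of $0$, whence $\mathrm{coker}(\overline f^{\dagger})\simeq\mathrm{Tr}_BM\oplus Q$ with $Q$ projective over $B^{\mathrm{op}}$, and after applying $D$ this extra summand only contributes an injective direct summand of $D\,\mathrm{coker}(\overline f^{\dagger})$, so $\tau_BM$, being a direct summand of a submodule of $\tau_AM$, is itself a submodule (of $A$-modules, the $B$-structure being restricted along $\pi$, which is what the statement means). Your alternative sketch via restricting the almost split sequence is indeed riskier, since the comparison map it produces is not obviously injective; the transpose route you carried out is the one to keep.
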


\subsection{Modules on slices}
\label{sec:modules-slices}

We start with the following lemma.

\begin{lem}
  \label{sec:modules-slices-1}
  Let $C$ be a tilted algebra, $M$ a module on a complete slice
  $\Sigma$ in $\Gamma({\rm mod}\,C)$ and $\widetilde C$ the relation
  extension of $C$. Then:
  \begin{enumerate}[(a)]
  \item If $M$ is projective as a $C$-module, then it is projective as
    a $\widetilde C$-module. If $M$ is not projective as a $C$-module
    then $\tau_CM\simeq \tau_{\widetilde C}M$.
  \item If $M$ is injective as a $C$-module, then it is injective as a
    $\widetilde C$-module. If $M$ is not injective as a $C$-module,
    then $\tau_C^{-1}M\simeq \tau_{\widetilde C}^{-1}M$.
  \end{enumerate}
\end{lem}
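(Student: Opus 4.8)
The two parts are dual to one another, so the plan is to prove (a) and deduce (b) from it. Indeed, since $C$ is tilted so is $C^{\rm op}$; the standard duality $D$ carries a complete slice of ${\rm mod}\,C$ to one of ${\rm mod}\,C^{\rm op}$, interchanges projectives with injectives and $\tau_C$ with $\tau_{C^{\rm op}}^{-1}$, and one has $(\widetilde C)^{\rm op}\cong\widetilde{C^{\rm op}}$ because $E={\rm Ext}^2_C(DC,C)$ is sent to the relation bimodule of $C^{\rm op}$. Throughout I would use two inputs. First, every module $M$ on a complete slice of a tilted algebra satisfies ${\rm pd}_C M\le 1$ and ${\rm id}_C M\le 1$: the slice is the image under ${\rm Hom}_H(T,-)$ of the indecomposable injective $H$-modules, which lie in ${\rm Gen}\,T$, so each such $M$ has a two-term projective presentation, and dually. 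Second, the dictionary recalled in~\ref{sec:from-direct-decomp}, $e_yEe_x\cong{\rm Ext}^2_C(I_x,P_y)$, which shows that the indecomposable projective $\widetilde C$-module $\widetilde P_i=e_i C\oplus e_iE$ equals $P_i$ precisely when $e_iE=0$.

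For the projectivity statement in (a), let $M=P_i$ be a slice module that is projective over $C$. Since ${\rm id}_C M\le 1$ we have ${\rm Ext}^2_C(-,P_i)=0$, hence $e_iE=\bigoplus_x{\rm Ext}^2_C(I_x,P_i)=0$. Thus no new arrow of $\widetilde Q$ starts at $i$, so $\widetilde P_i=P_i=M$ and $M$ is projective over $\widetilde C$. Note that this is the \emph{converse} of the first assertion of Proposition~\ref{sec:preliminary-facts-1}, so it genuinely requires the slice hypothesis and is not formal.

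For the translation statement in (a), let $M$ be a slice module that is not projective over $C$. Since any $\widetilde C$-projective annihilated by $E$ is already $C$-projective (Proposition~\ref{sec:preliminary-facts-1}), $M$ is not projective over $\widetilde C$ either, so Proposition~\ref{sec:preliminary-facts-1}(a) yields that $\tau_C M$ is a submodule of $\tau_{\widetilde C} M$. The heart of the matter is to upgrade this inclusion to an isomorphism, and the plan is to compare the two translates through their defining transposes $\tau_{*}=D{\rm Tr}_{*}$. Over $C$, the bound ${\rm pd}_C M\le 1$ gives the minimal presentation $0\to P_1\to P_0\to M\to 0$ and hence an explicit ${\rm Tr}_C M$. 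Over $\widetilde C$ the minimal projective presentation is larger because the modules $\widetilde P_j$ carry the extra summands $e_jE$; the hard part is to show that, after applying ${\rm Hom}_{\widetilde C}(-,\widetilde C)$, these summands contribute no new generator to the cokernel. I expect this to follow from the slice bounds ${\rm pd}_C M\le 1$ and ${\rm id}_C M\le 1$, which annihilate exactly the ${\rm Ext}^2_C$-terms that would otherwise enlarge ${\rm Tr}_{\widetilde C}M$. Granting this, $\dim_{\k}\tau_{\widetilde C}M=\dim_{\k}\tau_C M$, so the submodule inclusion above is forced to be an isomorphism.

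The dimension bookkeeping in the last step — proving that the new arrows do not enlarge $\tau_{\widetilde C}M$ — is the only real obstacle; everything else is formal once the two homological bounds on slice modules are in hand, and pinning down precisely which ${\rm Ext}^2$ vanishing does the job is where the work lies. As a conceptual cross-check, in the description $\widetilde C={\rm End}_{\mathcal C_H}(T)$ the slice modules lift to a region of $\mathcal D^b({\rm mod}\,H)$ on which the projection to $\mathcal C_H$ identifies no $F=\tau^{-1}[1]$-shifts, so that $\tau$ of $\mathcal D^b({\rm mod}\,H)$ computes $\tau_C$ and $\tau_{\widetilde C}$ simultaneously. This both explains the isomorphism on the slice and indicates why it must fail for modules whose $\tau$-orbit wraps around the fundamental domain.
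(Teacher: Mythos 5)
There is a genuine gap, and it sits exactly where you acknowledge it: the isomorphism $\tau_CM\simeq\tau_{\widetilde C}M$ for a non-projective slice module is never proved, only deferred (``I expect this to follow\dots'', ``Granting this\dots''). Your treatment of the projectivity statements is fine and essentially agrees with the paper: the paper also reduces (b) to (a) by duality, and proves projectivity via the criterion that $e E=0$ (citing \cite[Corollary 1.4]{AZ99}), which your computation $e_iE=\bigoplus_x{\rm Ext}^2_C(I_x,P_i)=0$ from ${\rm id}_CM\le 1$ reproduces. But the heart of the lemma is the $\tau$-statement, and the paper does not prove it by transpose-dimension bookkeeping. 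It invokes the precise criterion of \cite[Theorem 2.1]{AZ99}: $\tau_CM\simeq\tau_{\widetilde C}M$ if and only if \emph{two} conditions hold, namely $M\otimes_CE=0$ and ${\rm Hom}_C(E,\tau_CM)=0$. Verifying these is where all the work lies, and neither follows from the bounds ${\rm pd}_CM\le 1$, ${\rm id}_CM\le 1$ that your plan relies on.

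Concretely: the paper proves $M\otimes_CE=0$ by writing $M={\rm Hom}_H(T,I)$ with $I$ injective over the hereditary algebra $H$ determined by the slice, and computing in the derived category that $D(M\otimes_CE)\simeq{\rm Hom}_H(I,t(\tau_H^2T))$, which vanishes because $t(\tau_H^2T)$ is not injective and $H$ is hereditary. It proves ${\rm Hom}_C(E,\tau_CM)=0$ by showing, via the Auslander--Reiten formula, that $E\simeq\tau_C^{-1}\Omega^{-1}C$ and that every indecomposable summand of $E_C$ is a \emph{proper successor} of $\Sigma$, while $\tau_CM$ is a proper predecessor of $\Sigma$. Both arguments use the position of $M$ and of $E$ relative to the slice, not merely homological dimensions of $M$; in particular the second condition, which involves $\tau_CM$ and where $E$ sits in $\Gamma({\rm mod}\,C)$, is invisible in your transpose plan --- no amount of ${\rm Ext}^2$-vanishing for $M$ addresses it. (Your final logical step is sound: given the submodule inclusion $\tau_CM\subseteq\tau_{\widetilde C}M$ from Proposition~\ref{sec:preliminary-facts-1}, equality of dimensions would force an isomorphism. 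But establishing that equality \emph{is} the lemma, so the proposal as written proves only what was already formal.)
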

\begin{proof}
  We only prove (a) because the proof of (b) is dual. Assume first
  that $M=eC$ is projective, with $e$ a primitive idempotent of
  $C$. Let, as usual, $E={\rm Ext}_C^2(DC,C)$. Because $\widetilde
  C=C\ltimes E$, it follows from \cite[Corollary 1.4]{AZ99}, that $M$
  is projective as a $\widetilde C$-module if and only if $eE=0$. Now
  $eE=e{\rm Ext}^2_C(DC,C)={\rm Ext}^2_C(DC,eC)\simeq {\rm
    Ext}^2_C(DC,M)=0$ because $M$, lying on a complete slice in ${\rm
    mod}\,C$, has injective dimension at most one.

  Assume now that $M$ is not projective. It follows from
  \cite[Theorem 2.1]{AZ99} that $\tau_CM\simeq \tau_{\widetilde C}M$
  if and only if $M\underset C \otimes E =0$ and ${\rm Hom}_C(E,\tau_C
  M)=0$. We proceed to prove these two equalities.

  Because $C$ is tilted and $\Sigma$ is a complete slice, the algebra
  $H={\rm End}_C\left(\oplus_{U\in \Sigma_o}U\right)$ is hereditary
  and there exists a tilting $H$-module $T$ such that $C={\rm
    End}_H(T)$. Because $M\in \Sigma_o$, there exists an injective
  $H$-module $I$ such that $M={\rm Hom}_H(T,I)$, see \cite[(VIII.3.5)
  and (VIII.5.6)]{MR2197389}. Denoting as before by $[-]$ the shift functor in
  the bounded derived category $\mathcal D^b({\rm mod}\,H)$ and by
  $\tau$ its Auslander-Reiten translation, it follows from
  \cite{MR2409188} that:
  \[
  \begin{array}{rcl}
    D(M\underset C \otimes E)
    & \simeq & {\rm Hom}_C(M,DE) \\
    & \simeq & {\rm Hom}_C( {\rm Hom}_H(T,I), D{\rm Hom}_{\mathcal
               D^b({\rm mod}\,H)}(T,\tau^{-1}T[1]) ) \\
    & \simeq & {\rm Hom}_C( {\rm Hom}_H(T,I), D{\rm Hom}_{\mathcal
               D^b({\rm mod}\,H)}(\tau T,T[1]) ) \\
    & \simeq & {\rm Hom}_C( {\rm Hom}_H(T,I), D{\rm Ext}^1_{\mathcal
               D^b({\rm mod}\,H)}(\tau T,T) ) \\
    & \simeq & {\rm Hom}_C( {\rm Hom}_H(T,I), {\rm Hom}_H(T,\tau_H^2T)
               ) \\
    & \simeq & {\rm Hom}_H(I,t(\tau_H^2T))
  \end{array}
  \]
  where $t(\tau_H^2T)={\rm Hom}_H(T,\tau_H^2T)\underset C\otimes T$ is
  the torsion submodule of $\tau_H^2T$ in the torsion pair $(\mathcal
  T(T_H), \mathcal F(T_H))$ induced by $T$ in ${\rm mod}\,H$, see
  \cite[(VI.3.9)]{MR2197389}. Now $\tau_H^2T$ is clearly not
  injective, therefore neither is its submodule
  $t(\tau_H^2T)$. Because $I$ is injective and $H$ is hereditary, we
  infer that ${\rm Hom}_H(T,t(\tau_H^2T))=0$. Therefore $M\underset C
  \otimes E=0$.

  The proof that ${\rm Hom}_C(E,\tau_CM)=0$ is sensibly different. We
  first claim that every indecomposable summand of $E_C$ is a proper
  successor of the complete slice $\Sigma$. Indeed, the
  Auslander-Reiten formula yields
  \[
  E={\rm Ext}^2_C(DC,C) \simeq {\rm Ext}^1_C(DC,\Omega^{-1}C) \simeq
  D\underline{\rm Hom}_C(\tau_C^{-1}\Omega^{-1}C,DC)\,.
  \]
  Now for any indecomposable summand $N$ of $\Omega^{-1}C$, there
  exists an indecomposable injective $C$-module $I_0$ such that ${\rm
    Hom}_C(I_0,N)\neq 0$. Because the slice $\Sigma$ is sincere in
  ${\rm mod}\,C$, there exist $L\in \Sigma_o$ and a nonzero morphism
  $L\to I_0$. Thus we have a path $L\to I_0\to N \to \star \to
  \tau_C^{-1}N$ in ${\rm mod}\,C$, so that $\tau_C^{-1}N$ is a proper
  successor of $\Sigma$ in ${\rm mod}\,C$. This proves that any
  indecomposable summand of $\tau_C^{-1}\Omega^{-1}C$ is a proper
  successor of $\Sigma$ in ${\rm mod}\,C$. On the other hand, no
  indecomposable projective $C$-module is a proper successor of
  $\Sigma$. Therefore
  \[
  \underline{\rm Hom}_C(\tau_C^{-1}\Omega^{-1}C,DC) \simeq
  {\rm Hom}_C(\tau_C^{-1}\Omega^{-1}C,DC)
  \]
  and so $E\simeq {\rm Hom}_C(\tau_C^{-1}\Omega^{-1}C,DC) \simeq
  \tau_C^{-1}\Omega^{-1}C$. This establishes our claim that every
  indecomposable summand of $E$ is a proper successor of $\Sigma$.

  Now $\tau_CM$ is a proper predecessor of $\Sigma$. Therefore ${\rm
    Hom}_C(E,\tau_CM)=0$. This completes the proof. 
\end{proof}

\begin{prop}
  \label{sec:modules-slices-2}
  Let $C$ be a tilted algebra, $M$ a module in a complete slice
  $\Sigma$ in $\Gamma({\rm mod}\,C)$, $\widetilde C$ the relation
  extension algebra and $A$ an algebra such that there exist
  surjective algebra morphisms $\widetilde C\twoheadrightarrow
  A\twoheadrightarrow C$. Then:
  \begin{enumerate}[(a)]
  \item If $M$ is projective as a $C$-module, then it is projective as
    an $A$-module. If $M$ is not projective as a $C$-module, then
    $\tau_CM\simeq \tau_AM$.
  \item If $M$ is injective as a $C$-module, then it is injective as
    an $A$-module. If $M$ is not injective as a $C$-module, then
    $\tau_C^{-1}M\simeq \tau^{-1}_AM$.
  \end{enumerate}
\end{prop}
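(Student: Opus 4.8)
The plan is to bootstrap from Lemma~\ref{sec:modules-slices-1}, which already settles the case $A=\widetilde C$, by propagating the conclusion along the two surjections $\widetilde C\twoheadrightarrow A$ and $A\twoheadrightarrow C$ with the Auslander--Reiten comparison of Proposition~\ref{sec:preliminary-facts-1}. Since (b) is dual, I would only write (a). Throughout, $M$ is viewed as a module over each of $C$, $A$, $\widetilde C$ by restriction along the given surjections. A preliminary remark I would record is that $M$ stays indecomposable as an $A$-module: the $A$-action on $M$ factors through $C$, so the $A$-submodules of $M$ are exactly its $C$-submodules, whence any $A$-decomposition would be a $C$-decomposition of the (indecomposable, being on a complete slice) module $M$. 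This legitimizes every application of Proposition~\ref{sec:preliminary-facts-1} below.

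For the projective case, suppose $M$ is projective as a $C$-module. By Lemma~\ref{sec:modules-slices-1}(a) it is then projective as a $\widetilde C$-module, and applying Proposition~\ref{sec:preliminary-facts-1}(a) to the surjection $\widetilde C\twoheadrightarrow A$ (with $\widetilde C$ the larger algebra and $A$ its quotient) transfers projectivity over $\widetilde C$ to projectivity over $A$, as desired.

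For the non-projective case, suppose $M$ is not projective as a $C$-module. I would first note that $M$ is then projective over neither $A$ nor $\widetilde C$: projectivity over $A$ would, by Proposition~\ref{sec:preliminary-facts-1}(a) along $A\twoheadrightarrow C$, force projectivity over $C$, and likewise along $\widetilde C\twoheadrightarrow A$ for $\widetilde C$; so all three translates $\tau_CM,\tau_AM,\tau_{\widetilde C}M$ are defined. Now Proposition~\ref{sec:preliminary-facts-1}(a) applied to $A\twoheadrightarrow C$ yields an inclusion $\tau_CM\hookrightarrow\tau_AM$, while the same result applied to $\widetilde C\twoheadrightarrow A$ yields $\tau_AM\hookrightarrow\tau_{\widetilde C}M$; and Lemma~\ref{sec:modules-slices-1}(a) supplies an isomorphism $\tau_CM\simeq\tau_{\widetilde C}M$.

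Chaining these gives $\dim_\k\tau_CM\leqslant\dim_\k\tau_AM\leqslant\dim_\k\tau_{\widetilde C}M=\dim_\k\tau_CM$, so equality holds throughout; the monomorphism $\tau_CM\hookrightarrow\tau_AM$ between finite-dimensional modules of equal dimension is therefore an isomorphism, and $\tau_AM\simeq\tau_CM$ follows. Part (b) is obtained by the dual argument, replacing projectives by injectives, $\tau$ by $\tau^{-1}$, and ``submodule'' by ``quotient'', invoking Proposition~\ref{sec:preliminary-facts-1}(b) and Lemma~\ref{sec:modules-slices-1}(b). The one point needing care is keeping straight the \emph{directions} of the two inclusions: the surjection toward $C$ pushes $\tau_CM$ \emph{into} $\tau_AM$, whereas the surjection from $\widetilde C$ pushes $\tau_AM$ \emph{into} $\tau_{\widetilde C}M$, so that $\tau_AM$ is sandwiched between two modules already known to be isomorphic. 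It is precisely this squeezing, combined with the dimension count, that upgrades the inclusions to the required isomorphism rather than a mere submodule relation.
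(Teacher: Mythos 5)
Your proof is correct and follows exactly the route the paper intends: its own proof is the one-line remark that the statement ``follows from Lemma~\ref{sec:modules-slices-1} and Proposition~\ref{sec:preliminary-facts-1}'', and your argument---projectivity transferred along $\widetilde C\twoheadrightarrow A$, plus the sandwich $\tau_CM\hookrightarrow\tau_AM\hookrightarrow\tau_{\widetilde C}M\simeq\tau_CM$ upgraded to isomorphisms by a dimension count---is precisely the natural way to fill in that one line. The preliminary remark on indecomposability of $M$ over $A$ and $\widetilde C$ is a welcome detail the paper leaves implicit.
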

\begin{proof}
  This follows from lemma~\ref{sec:modules-slices-1} and proposition~\ref{sec:preliminary-facts-1}.
\end{proof}

\begin{cor}
  \label{sec:modules-slices-3}
  Let $C$ be a tilted algebra, $\Sigma$ a complete slice in
  $\Gamma({\rm mod}\,C)$, $\widetilde C$ the relation extension of $C$
  and $A$ an algebra such that there exist surjective algebra
  morphisms $\widetilde C\twoheadrightarrow A\twoheadrightarrow
  C$. Let $L\to M$ be an irreducible morphism between indecomposables
  in ${\rm mod}\,A$. If either $L$ or $M$ lies in $\Sigma$, then the
  other is a $C$-module.
\end{cor}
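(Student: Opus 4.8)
The plan is to combine the dichotomy that an irreducible morphism between indecomposables is either a monomorphism or an epimorphism with the control over Auslander--Reiten translates on $\Sigma$ furnished by Proposition~\ref{sec:modules-slices-2} and the vanishing $M\underset{C}{\otimes}E=0$ established inside the proof of Lemma~\ref{sec:modules-slices-1}(a). By symmetry it suffices to treat the case $M\in\Sigma$; the case $L\in\Sigma$ follows by passing to $A^{\mathrm{op}}$, for which $C^{\mathrm{op}}$ is again tilted with complete slice $D\Sigma$ and relation extension $(\widetilde C)^{\mathrm{op}}$, and for which the surjections dualise. Set $J=\ker(A\twoheadrightarrow C)$. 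Since $A$ lies between $\widetilde C=C\ltimes E$ and $C$ with $\widetilde C\twoheadrightarrow C$ the canonical projection of kernel $E$ (and $E^2=0$), the ideal $J$ satisfies $J^2=0$ and is a quotient of $E$ as a $C$--$C$--bimodule.

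First I would dispose of the monomorphic case: if $L\to M$ is a monomorphism, then $L$ is isomorphic to a submodule of $M$; as $M$ is a $C$-module it is annihilated by $J$, hence so is every submodule, and thus $L$ is a $C$-module.

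The substantial case is when $L\to M$ is an epimorphism. Then $M$ cannot be projective as an $A$-module, since an indecomposable projective receives only monomorphic irreducible maps (the inclusions of the indecomposable summands of its radical); by Proposition~\ref{sec:modules-slices-2}(a) it follows that $M$ is not projective as a $C$-module either. Consequently $L$ is a direct summand of the middle term $F$ of the almost split sequence $0\to\tau_AM\to F\to M\to 0$ in $\mathrm{mod}\,A$, and Proposition~\ref{sec:modules-slices-2}(a) gives $\tau_AM\simeq\tau_CM$, a $C$-module (alternatively one may see this already from Proposition~\ref{sec:preliminary-facts-1}, as $\tau_AM$ embeds in the $C$-module $\tau_{\widetilde C}M$). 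Thus $F$ is an extension in $\mathrm{mod}\,A$ of the $C$-module $M$ by the $C$-module $\tau_CM$, and it suffices to prove that $F$ itself is a $C$-module, for then its summand $L$ is one too.

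The crux is the following observation on such extensions. Because $MJ=0$ we have $FJ\subseteq\tau_AM$, and because $(\tau_AM)J=0$ together with $J^2=0$, the right action of $J$ on $F$ descends to a well-defined morphism $M\underset{C}{\otimes}J\to\tau_CM$ whose vanishing is equivalent to $FJ=0$, that is, to $F$ being a $C$-module. Now $J$ is a quotient of $E$, so $M\underset{C}{\otimes}J$ is a quotient of $M\underset{C}{\otimes}E$, and the latter is zero by the computation in the proof of Lemma~\ref{sec:modules-slices-1}(a), which is valid precisely because $M$ lies on the complete slice $\Sigma$. Hence the obstruction vanishes, $F$ is a $C$-module, and $L$ is a $C$-module as required. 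I expect this final homological step---recognising that the failure of the middle term $F$ to be a $C$-module is measured by $M\underset{C}{\otimes}J$, and tying this to the slice vanishing $M\underset{C}{\otimes}E=0$---to be the main obstacle; the remainder rests only on the standard structure of irreducible morphisms and on the comparison of translates already available from Proposition~\ref{sec:modules-slices-2}.
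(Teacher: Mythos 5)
There is a genuine gap, and it sits at the foundation of your argument: the claim that $J=\ker(A\twoheadrightarrow C)$ satisfies $J^2=0$ and is a quotient of $E$ as a $C$-$C$-bimodule. You justify this by asserting that the composite $\widetilde C\twoheadrightarrow A\twoheadrightarrow C$ is ``the canonical projection of kernel $E$''. But the hypothesis only says that \emph{some} surjective algebra morphisms exist; nothing forces their composite to be the canonical projection $C\ltimes E\to C$, nor even to kill $E$. Concretely, let $C$ be the tilted algebra $1\xrightarrow{a}2\xrightarrow{b}3$ bound by $ab=0$; then $\widetilde C$ is the $3$-cycle with arrows $a,b,\gamma$ and all paths of length two equal to zero. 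The assignment $e_1\mapsto e_2$, $e_2\mapsto e_3$, $e_3\mapsto e_1$, $a\mapsto b$, $\gamma\mapsto a$, $b\mapsto 0$ is a surjective algebra morphism $\widetilde C\to C$ whose kernel is $\k b$, not $E=\k\gamma$; taking $A=\widetilde C$ with this map as the second surjection gives admissible data for the corollary in which $J\neq p(E)$ and the composite is not canonical. In this small example $J$ still happens to be square-zero and isomorphic to a twist of $E$, but that is exactly the point: whether this always holds for an arbitrary $A$ sandwiched between $\widetilde C$ and $C$ is a nontrivial structural assertion that your proof assumes rather than proves. Without $J^2=0$ you cannot even endow $J$ with a $C$-$C$-bimodule structure, so the obstruction map $M\otimes_C J\to\tau_C M$ and the comparison with $M\otimes_C E=0$ are not available, and the epimorphism case collapses. (Your monomorphism case, and the obstruction computation itself, are fine \emph{granted} the structural claim; there is also a secondary issue that even when the kernel is a ``copy'' of $E$ the identification is only up to an automorphism of $C$, so one must check the vanishing $M\otimes_C J=0$ for twisted bimodules, using that automorphisms carry complete slices to complete slices.)

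This is precisely the difficulty that the paper's own proof is designed to avoid: it never analyses kernels at all. Assuming (by duality) $L\in\Sigma_o$, it splits on whether $L$ is injective as a $C$-module. If so, $L$ is injective over $A$ by Proposition~\ref{sec:modules-slices-2}, so the minimal left almost split map in ${\rm mod}\,A$ is $L\twoheadrightarrow L/{\rm soc}\,L$ and $M$ is a summand of the $C$-module $L/{\rm soc}\,L$. If not, $\tau_C^{-1}L\simeq\tau_A^{-1}L$ and the almost split sequence $0\to L\to X\to\tau_C^{-1}L\to 0$ of ${\rm mod}\,C$ remains almost split in ${\rm mod}\,A$, so $M$ is a summand of the $C$-module $X$. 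The only inputs are Proposition~\ref{sec:preliminary-facts-1} (which needs nothing but surjectivity) and the slice computations of Lemma~\ref{sec:modules-slices-1}; no information about $\ker(A\to C)$ is ever required. If you want to salvage your route, you would first have to prove that for every such $A$ the ideal $J$ is square-zero and a bimodule quotient of a twist of $E$ --- a statement that is itself at least as hard as the corollary.
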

\begin{proof}
  We may, by duality, assume that $L\in \Sigma_o$. Suppose first that
  $L$ is an injective $C$-module. Because of proposition
  \ref{sec:modules-slices-3}, it is injective as an $A$-module. In
  particular, ${\rm soc}_CL={\rm soc}_AL$ and so the canonical
  projection $L\twoheadrightarrow L/{\rm soc}_CL$ is a minimal left
  almost split morphism in ${\rm mod}\,A$. Therefore $M$ is an
  indecomposable direct
  summand of $L/{\rm soc}_CL$ and in particular is a $C$-module.

  Suppose that $L$ is not injective as a $C$-module. Because of
  proposition~\ref{sec:modules-slices-3}, we have $\tau_C^{-1}L\simeq
  \tau_A^{-1}L$. It then follows from \cite[Theorem 2.1]{AZ99} that
  the almost split sequence $0\to L\to X\to \tau_C^{-1}L\to 0$  in
  ${\rm mod}\,C$ remains almost split in ${\rm mod}\,A$. Therefore $M$
  is an indecomposable direct summand of $X$, so it is a
  $C$-module. This completes the proof.
\end{proof}

\subsection{The existence of local slices}
\label{sec:exist-local-slic}

We are now able to prove the main result of this section.

\begin{thm}
  \label{sec:exist-local-slic-2}
    Let $C$ be a tilted algebra and $A$ be an algebra such that there
  exist surjective algebra morphisms $\widetilde C\twoheadrightarrow
  A\twoheadrightarrow C$, then any complete slice in $\Gamma({\rm
    mod}\,C)$ embeds as a local slice in $\Gamma({\rm mod}\,A)$. In
  particular, partial relation extensions admit local slices.
\end{thm}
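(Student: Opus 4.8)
The plan is to verify the three defining conditions of a local slice for the image of a complete slice $\Sigma\subseteq\Gamma(\mathrm{mod}\,C)$, viewed inside $\Gamma(\mathrm{mod}\,A)$ through the full embedding $\mathrm{mod}\,C\hookrightarrow\mathrm{mod}\,A$ induced by $A\twoheadrightarrow C$. Condition (3) is immediate: the surjections $\widetilde C\twoheadrightarrow A\twoheadrightarrow C$ give $\mathrm{rk}\,K_0(C)\leqslant\mathrm{rk}\,K_0(A)\leqslant\mathrm{rk}\,K_0(\widetilde C)$, and since the relation extension has the same vertices as $C$ we have $\mathrm{rk}\,K_0(\widetilde C)=\mathrm{rk}\,K_0(C)=|\Sigma_o|$, forcing equalities. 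Throughout I would use Proposition~\ref{sec:modules-slices-2} as the central tool: for $U\in\Sigma_o$ it preserves projectivity and injectivity and gives $\tau_AU\simeq\tau_CU$ (resp. $\tau_A^{-1}U\simeq\tau_C^{-1}U$). A first consequence is that the minimal left (resp. right) almost split morphisms, and hence the almost split sequences, starting (resp. ending) at a module of $\Sigma_o$ coincide in $\mathrm{mod}\,C$ and in $\mathrm{mod}\,A$; a second is that $\Sigma$ meets each $\tau_A$-orbit at most once.

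For the presection condition (1), consider an irreducible morphism $L\to M$ in $\mathrm{mod}\,A$ with, say, $L\in\Sigma_o$. By Corollary~\ref{sec:modules-slices-3} the module $M$ is a $C$-module, and since the almost split sequence starting at $L$ is the same in both categories, $L\to M$ is also an arrow of $\Gamma(\mathrm{mod}\,C)$. As a complete slice is a section, the presection property holds in $\mathrm{mod}\,C$, so $M\in\Sigma_o$ or $\tau_CM\in\Sigma_o$. In the latter case $\tau_CM$ lies on $\Sigma$ and is non-injective (its inverse translate $M$ is nonzero), so Proposition~\ref{sec:modules-slices-2} applied to $\tau_CM$ yields $\tau_AM\simeq\tau_CM\in\Sigma_o$. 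The dual argument treats the case $M\in\Sigma_o$, establishing (1).

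The sectional convexity condition (2) is the main obstacle. The strategy is to reduce it to $\mathrm{mod}\,C$, where a complete slice is a convex subquiver and therefore sectionally convex. Two observations drive the reduction. First, an irreducible morphism of $\mathrm{mod}\,A$ between two $C$-modules is automatically irreducible in $\mathrm{mod}\,C$, because $\mathrm{mod}\,C$ is a full subcategory of $\mathrm{mod}\,A$ and $A$-linear maps between $C$-modules are $C$-linear; thus any factorisation available in $\mathrm{mod}\,C$ is already available in $\mathrm{mod}\,A$. Second, on the connecting component of $\mathrm{mod}\,C$ the slice distance $d$, defined by $\tau_C^{\,d(X)}X\in\Sigma_o$, is non-decreasing along arrows, so a path of $\Gamma(\mathrm{mod}\,C)$ joining two points of $\Sigma$ stays on $\Sigma$. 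Consequently, once one knows that every module occurring in a sectional path $M_0\to\cdots\to M_n$ with $M_0,M_n\in\Sigma_o$ is a $C$-module, the first observation turns the path into a genuine path of $\Gamma(\mathrm{mod}\,C)$ and convexity forces every $M_i\in\Sigma_o$.

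It remains to confine the sectional path to $\mathrm{mod}\,C$, and this is where the difficulty concentrates. The base case is length two: if $M_0,M_2\in\Sigma_o$ then applying the presection property from both ends gives $\tau_AM_1\in\Sigma_o$ and $\tau_A^{-1}M_1\in\Sigma_o$ whenever $M_1\notin\Sigma_o$, contradicting the fact that $\Sigma$ meets the $\tau_A$-orbit of $M_1$ at most once; hence $M_1\in\Sigma_o$. For a longer path I would examine a maximal sub-path $M_a\to\cdots\to M_b$ whose endpoints lie on $\Sigma$ and whose interior avoids $\Sigma$: Corollary~\ref{sec:modules-slices-3} and the presection property show that $M_{a+1}$ and $M_{b-1}$ are $C$-modules with $d=1$ and $d=-1$ respectively, so the interior cannot consist solely of $C$-modules without violating monotonicity of $d$. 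The remaining task is to rule out that the path enters a genuinely new, $A$-specific module; I expect to do this by combining the sectionality condition $M_{i+1}\neq\tau_A^{-1}M_{i-1}$ with Proposition~\ref{sec:preliminary-facts-1}, which compares $\tau_A$ and $\tau_C$ on the $C$-modules adjacent to the slice, so that such an excursion would produce in $\mathrm{mod}\,C$ either a non-sectional hook or a second meeting of $\Sigma$ with a $\tau_A$-orbit---both impossible. Granting this, $\Sigma$ is a local slice in $\Gamma(\mathrm{mod}\,A)$. Finally, a partial relation extension $B=C\ltimes E'$ fits the hypotheses, since $\widetilde C=B\ltimes E''$ by Lemma~\ref{sec:definition-examples} gives surjections $\widetilde C\twoheadrightarrow B\twoheadrightarrow C$; this yields the last assertion.
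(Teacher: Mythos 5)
Your treatment of the cardinality condition and of the presection condition is correct and essentially identical to the paper's: Corollary~\ref{sec:modules-slices-3} puts the neighbour of a slice module inside $\mathrm{mod}\,C$, fullness of $\mathrm{mod}\,C$ in $\mathrm{mod}\,A$ keeps the morphism irreducible there, and Proposition~\ref{sec:modules-slices-2} identifies $\tau_A$ with $\tau_C$ on slice modules. (One caveat: your auxiliary claim that $\Sigma$ meets each $\tau_A$-orbit at most once is not justified as stated, since Proposition~\ref{sec:modules-slices-2} applies only to modules lying on $\Sigma$ and cannot be iterated along a $\tau_A$-orbit; it can be repaired by translating back to $\tau_C$-orbits via the proposition, but only your length-two base case uses it.)

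The genuine gap is in sectional convexity, and you flag it yourself (``I expect to do this\dots'', ``Granting this''). Your strategy requires showing that every module on the sectional path $M_0\to M_1\to\cdots\to M_t$ is a $C$-module, but the available tools control only modules adjacent to $\Sigma$ (Corollary~\ref{sec:modules-slices-3}) or compare $\tau_A$ with $\tau_C$ on $C$-modules (Proposition~\ref{sec:preliminary-facts-1}); neither says anything about an interior module $M_i$, $2\leqslant i\leqslant t-1$, which a priori is a genuinely new $A$-module, and your proposed dichotomy (``non-sectional hook or a second meeting of an orbit'') is not derived from anything. The idea you are missing is that the path need not be confined to $\mathrm{mod}\,C$ at all. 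The paper argues as follows: after truncating, assume $M_1\notin\Sigma_o$; by Corollary~\ref{sec:modules-slices-3} the module $M_1$ is a $C$-module; the composite $f_t\cdots f_2\colon M_1\to M_t$ is nonzero in $\mathrm{mod}\,A$ because composites of irreducible morphisms along a sectional path never vanish; since $M_1$ and $M_t$ are $C$-modules and $\mathrm{mod}\,C$ is full in $\mathrm{mod}\,A$, both this composite and $f_1\colon M_0\to M_1$ are nonzero morphisms in $\mathrm{mod}\,C$; the convexity of the complete slice $\Sigma$ with respect to paths of nonzero morphisms between indecomposables (not merely paths in the Auslander--Reiten quiver) applied to $M_0\to M_1\to M_t$ then forces $M_1\in\Sigma_o$, a contradiction. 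The interior modules $M_2,\ldots,M_{t-1}$ never need to be identified as $C$-modules; only the single composed morphism matters, and this one observation replaces your entire induction.
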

\begin{proof}
  Because clearly $\vert \Sigma_o \vert ={\rm rk}(K_0(C))={\rm
    rk}(K_0(A))$, it suffices to prove the first two properties in the
  definition of local
  slices.

  We first show that $\Sigma$ is a presection in $\Gamma({\rm
    mod}\,A)$. Let $f\colon L\to M$ be an irreducible morphism between
  indecomposables in ${\rm mod}\,A$. Assume $L\in \Sigma$. Because of
  corollary~\ref{sec:modules-slices-3}, $M$ is a $C$-module. Therefore
  $f$ remains an irreducible morphism in ${\rm mod}\,C$. Because the
  complete slice $\Sigma$ is a presection in $\Gamma({\rm mod}\,C)$,
  we have $M\in \Sigma_o$ or $\tau_CM\in \Sigma_o$. In the latter
  case, the observation that $\tau_CM\simeq \tau_AM$ completes the
  proof.

  One shows in exactly the same way that, if $M\in \Sigma_o$, then $L\in
  \Sigma_o$ or $\tau_A^{-1}L\in \Sigma_o$.

  There remains to prove sectional convexity. Let
  \[
  M_0\xrightarrow{f_1}M_1\xrightarrow{f_2}M_2\to\cdots
  \xrightarrow{f_t}M_t
  \]
  be a sectional path in $\Gamma({\rm mod}\,A)$, with $M_0,M_t\in
  \Sigma$. We may assume without loss of generality  that
  $M_1\not\in \Sigma_o$. Because of
  corollary~\ref{sec:modules-slices-3}, $M_1$ is a $C$-module. Now,
  observe that the morphism $f_t\cdots f_2\colon M_1\to M_t$ is
  nonzero in ${\rm mod}\,A$, because it is the composition of a
  sectional path. Therefore it is also nonzero in ${\rm
    mod}\,C$. Because $f_1\colon M_0\to M_1$ is also nonzero in ${\rm
    mod}\,C$,  the
  convexity of $\Sigma$ in ${\rm mod}\,C$ and the path $M_0\to M_1\to
  M_t$ yield $M_1\in \Sigma_o$, a contradiction which completes the proof.
\end{proof}

In particular, our result applies to partial relation extensions.

\begin{cor}
  \label{sec:exist-local-slic-1}
  Let $C$ be a tilted algebra and $B$  a partial relation
  extension. Then any complete slice in $\Gamma({\rm mod}\,C)$ embeds
  as a local slice in $\Gamma({\rm mod}\,B)$.\hfill$\square$
\end{cor}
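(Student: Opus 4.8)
The plan is to obtain this corollary as a formal specialization of Theorem~\ref{sec:exist-local-slic-2}, so the entire task reduces to exhibiting, for a partial relation extension $B$, the chain of surjective algebra morphisms $\widetilde C\twoheadrightarrow B\twoheadrightarrow C$ required by that theorem. First I would unwind the definition: a partial relation extension has the form $B=C\ltimes E'$, where $E'$ is a direct summand of the relation bimodule $E={\rm Ext}^2_C(DC,C)$, say $E=E'\oplus E''$. The trivial extension $C\ltimes E'$ comes equipped with the canonical algebra projection $\pi\colon B\to C$, $(c,e')\mapsto c$, which kills $E'$; this is manifestly a surjective algebra morphism, giving the right-hand arrow $B\twoheadrightarrow C$.

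Next I would produce the left-hand arrow by invoking Lemma~\ref{sec:definition-examples}, which identifies the full relation extension with the iterated trivial extension, $\widetilde C=B\ltimes E''$, where $E''$ is endowed with its $B$-$B$-bimodule structure through $\pi$. Under this identification, the canonical projection of a trivial extension onto its base algebra yields a surjective algebra morphism $\widetilde C=B\ltimes E''\twoheadrightarrow B$ killing $E''$. Composing the two projections recovers the standard projection $\widetilde C\twoheadrightarrow C$ (it annihilates $E''$ and then $E'$, hence all of $E$), which confirms that the chain is coherent. Thus the hypotheses of Theorem~\ref{sec:exist-local-slic-2} are satisfied with $A=B$, and applying it yields that any complete slice $\Sigma$ in $\Gamma({\rm mod}\,C)$ embeds as a local slice in $\Gamma({\rm mod}\,B)$, which is precisely the assertion.

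I do not expect any genuine obstacle here: the only substantive input is the decomposition $\widetilde C\cong B\ltimes E''$, i.e.\ that the two successive trivial extensions recombine into the relation extension, and this is exactly the content of Lemma~\ref{sec:definition-examples} (established via the explicit vector-space isomorphism $\varphi$ checked there). Everything else is the routine observation that a trivial extension projects onto its base. Consequently the corollary is a one-line deduction once the two canonical projections have been named, and the \emph{only} point deserving a word of justification is the reference to Lemma~\ref{sec:definition-examples} that supplies the morphism $\widetilde C\twoheadrightarrow B$.
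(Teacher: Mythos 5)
Your proof is correct and is exactly the argument the paper intends: the corollary is stated with no written proof because it follows from Theorem~\ref{sec:exist-local-slic-2} once one notes the canonical projections $\widetilde C=B\ltimes E''\twoheadrightarrow B$ (via Lemma~\ref{sec:definition-examples}) and $B=C\ltimes E'\twoheadrightarrow C$. Your write-up simply makes explicit what the authors leave implicit, so there is nothing to add or correct.
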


The reader may notice that the example in \cite{ABS08} of a local
slice is an example of a local slice in a partial relation
extension. We give an example of an algebra which has a local slice
but is not a partial relation extension.

\begin{exple}
  \label{sec:exist-local-slic-3}
  Let $A$ be given by the quiver
  \[
  \xymatrix{
    1 \ar[rrr]^\delta & & & 5 \ar[ld]^\alpha \\
    & 3 \ar[lu]_\gamma \ar[ld]^\mu & 4 \ar[l]_\beta & \\
    2 & & & 6 \ar[lu]_\lambda
  }
  \]
  bound by $\lambda\beta\mu=0$, $\alpha\beta \gamma=0$,
  $\gamma\delta=0$, $\delta\alpha=0$. Then $\Gamma({\rm mod}\,A)$ is
  given by
    \[
  \begin{tikzpicture}[font=\tiny,xscale=1.1,yscale=1.1]

    \node (5) at (0,0) {$
      \begin{array}{c}
        5
      \end{array}$};
    
    \node (15) at (1,1) {$
      \begin{array}{c}
        1\\5
      \end{array}$};

    \node (1) at (2,0) {$
      \begin{array}{c}
        1
      \end{array}$};

    \node (312) at (3,-1) {$
      \begin{array}{c}
        3\\12
      \end{array}$};

    \node (2) at (2,-2) {$
      \begin{array}{c}
        2
      \end{array}$};

    \node (32) at (4,0) {$
      \begin{array}{c}
        3\\2
      \end{array}$};

    \node (4312) at (4,-1) {$
      \begin{array}{c}
        4\\3\\12
      \end{array}$};

    \node (31) at (4,-2) {$
      \begin{array}{c}
        3\\1
      \end{array}$};

    \node (43312) at (5,-1) {$
      \begin{array}{c}
        4\\33\\12
      \end{array}$};

    \node (431) at (6,0) {$
      \begin{array}{c}
        4\\3\\1
      \end{array}$};

    \node (3) at (6,-1) {$
      \begin{array}{c}
        3
      \end{array}$};

    \node (432) at (6,-2) {$
      \begin{array}{c}
        4\\3\\2
      \end{array}$};

    \node (6431) at (7,1) {$
      \begin{array}{c}
        6\\4\\3\\1
      \end{array}$};

    \node (43) at (7,-1) {$
      \begin{array}{c}
        4\\3
      \end{array}$};

    \node (5432) at (7,-3) {$
      \begin{array}{c}
        5\\4\\3\\2
      \end{array}$};

    \node (643) at (8,0) {$
      \begin{array}{c}
        6\\4\\3
      \end{array}$};

    \node (4) at (8,-1) {$
      \begin{array}{c}
        4
      \end{array}$};

    \node (543) at (8,-2) {$
      \begin{array}{c}
        5\\4\\3
      \end{array}$};

    \node (65443) at (9,-1) {$
      \begin{array}{c}
        65\\44\\3
      \end{array}$};

    \node (54) at (10,0) {$
      \begin{array}{c}
        5\\4
      \end{array}$};

    \node (6543) at (10,-1) {$
      \begin{array}{c}
        65\\4\\3
      \end{array}$};

    \node (64) at (10,-2) {$
      \begin{array}{c}
        6\\4
      \end{array}$};

    \node (654) at (11,-1) {$
      \begin{array}{c}
        65\\4
      \end{array}$};

    \node (6) at (12,0) {$
      \begin{array}{c}
        6
      \end{array}$};

    \node (5bis) at (12,-2) {$
      \begin{array}{c}
        5
      \end{array}$};

    \draw[->] (5) -- (15);

    \draw[->] (15) -- (1);

    \draw[->] (1) -- (312);

    \draw[->] (2) -- (312);

    \draw[->] (312) -- (32);

    \draw[->] (312) -- (4312);

    \draw[->] (312) -- (31);

    \draw[->] (32) -- (43312);

    \draw[->] (4312) -- (43312);

    \draw[->] (31) -- (43312);

    \draw[->] (43312) -- (431);

    \draw[->] (43312) -- (3);

    \draw[->] (43312) -- (432);

    \draw[->] (431) -- (6431);

    \draw[->] (431) -- (43);

    \draw[->] (3) -- (43);

    \draw[->] (432) -- (43);

    \draw[->] (432) -- (5432);

    \draw[->] (5432) -- (543);

    \draw[->] (43) -- (4);

    \draw[->] (43) -- (543);

    \draw[->] (43) -- (643);

    \draw[->] (643) -- (65443);

    \draw[->] (4) -- (65443);

    \draw[->] (543) -- (65443);

    \draw[->] (65443) -- (54);

    \draw[->] (65443) -- (6543);

    \draw[->] (64) -- (654);

    \draw[->] (654) -- (6);

    \draw[->] (654) -- (5bis);

    \draw[->] (6431) -- (643);

    \draw[->] (65443) -- (64);

    \draw[->] (54) -- (654);

    \draw[->] (6543) -- (654);

    \draw[rounded corners=10pt]
    (5432.south)
    -- (43312.south) -- (4312.south) -- (4312.west) --
    (4312.north) -- (43312.north)
    -- (6431.north) -- (6431.east) -- (43312.east) -- 
    (5432.east) --
    cycle;
\end{tikzpicture}
  \]
  where the two copies of $5$ are identified. We have illustrated
  a local slice which arises from the embedding of $\Gamma({\rm
    mod}\,C)$ in $\Gamma({\rm mod}\,A)$, where $C$ is the algebra
  obtained from 
  $A$ by deleting the arrow $\delta$ (that is, $C=A/\langle
  \delta\rangle$). Notice that $C$ is a tilted algebra of type
  $\mathbb E_6$. Notice finally that $A$ is not a partial relation
  extension as shown by direct inspection: for instance, if $A$ were a
  partial relation extension of $C$, then the defining relations of
  $A$ involving the arrow $\gamma$ would be $\beta\gamma\delta=0$ and
  $\delta\alpha\beta=0$ (instead of $\gamma\delta=0$ and
  $\delta\alpha=0$) because the Keller potential has a unique oriented
  cycle, namely, $\alpha\beta\gamma\delta$, containing $\alpha$ or $\gamma$.
\end{exple}

\begin{center}
  \textsc{Acknowledgements}
\end{center}
Part of this work was done during visits of the fourth named author to
his coauthors at Université de Sherbrooke. He thanks them for their
warm hospitality during his stays.

The first named author gratefully acknowledges partial support from
the NSERC of Canada, FRQNT of Québec and the Université de Sherbrooke.

The fourth named author acknowledges financial support from CRM (UMI
CNRS 3457).

The fifth named author acknowledges financial support from the NSERC
of Canada.


\end{document}